\documentclass{tran-l}

\usepackage{verbatim}
\usepackage{appendix}
\usepackage{amssymb}
\usepackage{amsmath}
\usepackage{xcolor}
\newtheorem{theorem}{Theorem}[section]
\newtheorem{lemma}[theorem]{Lemma}
\newtheorem{proposition}[theorem]{Proposition}

\newtheorem*{observation}{Observation}

\newtheorem{corollary}[theorem]{Corollary}
\theoremstyle{definition}
\newtheorem{definition}[theorem]{Definition}

\theoremstyle{remark}
\newtheorem{remark}[theorem]{Remark}
\numberwithin{equation}{section}
\newtheorem{question}{Question}[section]

\DeclareMathOperator{\diag}{diag}

\begin{document}

\title[HAUSDORFF DIMENSION OF ANOSOV SUBGROUPS' LIMIT SETS]{Hausdorff Dimension of Anosov Subgroups' Limit Sets with Special Self-Affine Complexity}

\author{Zhufeng Yao}

\thanks{The author was supported by the National University of Singapore and the Ministry of Education grant A-8004148-00-00.}


\subjclass[2010]{Primary 22E40}

\date{}

\dedicatory{}

\begin{abstract}
    Let $\Gamma\subset \mathsf{PGL}(d,\mathbb{R})$ be an irreducible projective Anosov subgroup and let $\Lambda^1(\Gamma)$ be its projective limit set. Viewing $\Lambda^1(\Gamma)$ as an analogue of a self-affine set, we investigate the Hausdorff dimension of $\Lambda^1(\Gamma)$ under specific assumptions regarding its affine complexity:
    \begin{itemize}
        \item If $\Lambda^1(\Gamma)$ is of full Hausdorff dimension, then $d= 2$ and $\Gamma$ is a cocompact lattice.
        
        \item If $d = 3$ and $\Gamma$ is the image of a closed surface group under an irreducible Anosov representation, then $\Lambda^1(\Gamma)$ never has Hausdorff dimension $1$ unless the representation is Hitchin.

        \item If the limit set $\Lambda^1(\Gamma)$ exhibits a partial quasi-self-similarity (in the sense of Falconer~\cite{falconerselfsimilar1})---which can be implied by the ``regular distortion property'' of $\Gamma$---then the Hausdorff dimension of $\Lambda^1(\Gamma)$ equals the critical exponent of the first simple root. An application of this result is the computation of the Hausdorff dimension of the limit set for arbitrary $\Theta$-positive representations of convex cocompact Fuchsian groups.
    \end{itemize}

    These results answer some questions raised by Li--Pan--Xu~\cite{lipanxu} and Yao~\cite{yaothetapositive}.
\end{abstract}

\maketitle
\setcounter{tocdepth}{1}

\tableofcontents

\section*{Declaration of Generative AI Usage}
During the preparation of this manuscript, the author utilized Gemini and Chatgpt to assist with writing, refine grammar and vocabulary, improve sentence flow, and correct typographical errors.

\section{Introduction}
Patterson~\cite{patterson1976limit} showed that for a convex cocompact Fuchsian group $\Gamma_0$ acting on the hyperbolic disk $\mathbb{D}$, the Hausdorff dimension of its limit set $\Lambda(\Gamma_0)\subset \partial \mathbb{D}$ equals the exponential growth rate of its orbit with respect to the hyperbolic distance, by constructing a canonical Hausdorff measure on $\Lambda(\Gamma_0)$. Sullivan~\cite{sullivanoriginal} generalized this construction---now referred to as the Patterson--Sullivan measure---to show the same equality for the Hausdorff dimension of limit sets of convex cocompact subgroups acting on $d$-dimensional hyperbolic spaces. Subsequently, these constructions have been generalized to study the asymptotic and fractal properties of discrete subgroups acting on various geometric spaces. 

In this paper, we focus on the setting of Anosov subgroups in $\mathsf{PGL}(d,\mathbb{R})$, a class of groups introduced by Labourie~\cite{labourie2005anosovflowssurfacegroups} and extended by Guichard--Wienhard~\cite{GW1} as a higher-rank generalization of convex cocompact Fuchsian groups (see also Kapovich--Leeb--Porti~\cite{kapovich2014morse, kapovich2017anosov, anosovmorseandbuilding} and Gu\'eritaud--Guichard--Kassel--Wienhard~\cite{Gu_ritaud_2017} for further developments). For references regarding Patterson--Sullivan theory in this setting, see Albuquerque~\cite{Albuquerque}, Quint~\cite{quintoriginal}, Sambarino~\cite{sambarinopattersonsullivan}, Dey--Kapovich~\cite{deykapovichps},  Burger--Landesberg--Lee--Oh~\cite{BurgerLandesbergLeeHee}, Lee--Oh~\cite{leeohps1, leeohps2}, Dey--Kim--Oh~\cite{heeohregularity}, and many others.

For the remainder of this paper, we restrict our attention to the group $\mathsf{PGL}(d,\mathbb{R})$. However, we emphasize that all our discussions apply equally to $\mathsf{PSL}(d,\mathbb{R})$ (and note that $\mathsf{PGL}(d,\mathbb{R}) \cong \mathsf{PSL}(d,\mathbb{R})$ when $d$ is odd). Fix an Euclidean metric on $\mathbb{R}^{d}$. For any $g\in \mathsf{PGL}(d,\mathbb{R})$  and $1\le i, j \le d$, the singular value ratio $\frac{\sigma_i}{\sigma_j}(g)$ is well-defined, independent of the choice of the representative of $g$ in $\mathsf{GL}(d,\mathbb{R})$. A finitely generated subgroup $\Gamma\subset \mathsf{PGL}(d,\mathbb{R})$  is called \emph{projective Anosov} if there exist $a>1$ and $A>0$ such that for any $\gamma\in \Gamma$,
\[
\frac{1}{a} d_{word}(\mathrm{id},\gamma)-A \le \log \frac{\sigma_1}{\sigma_2}(\gamma) \le a d_{word}(\mathrm{id},\gamma) + A.
\]
Here, the word metric $d_{word}$ depends on an implicit choice of a finite generating set which will not affect the definition of projective Anosov subgroups. The \emph{projective limit set} $\Lambda^1(\Gamma)\subset \mathbb{RP}^{d-1}$ is defined as the unique minimal $\Gamma$-invariant closed subset. When $|\Lambda^1(\Gamma)|=\infty$, $\Gamma$ is called \emph{non-elementary}, in which case $\Lambda^1(\Gamma)$ is a perfect set (i.e., an uncountable closed set with no isolated points). It can be shown that any projective Anosov subgroup is Gromov 
hyperbolic (see Kapovich--Leeb--Porti~\cite[Theorem~1.5]{anosovmorseandbuilding}), and the limit set is equivariantly homeomorphic to the Gromov boundary $\partial \Gamma$.

As mentioned, a typical example is given by convex cocompact Fuchsian subgroups $\Gamma_0\subset \mathsf{PSL}(2,\mathbb{R})$, whose limit set can be identified with the set of accumulation points in the boundary $\partial \mathbb{D}$ of any orbit $\Gamma_0 b_0$, $b_0\in \mathbb{D}$. As a generalization, let $\Gamma_0$ be a closed surface group and let $\rho_d:\mathsf{PSL}(2,\mathbb{R})\to \mathsf{PSL}(d,\mathbb{R})$ be the irreducible representation. Then, for any representation $\rho \in \mathrm{Hom}(\Gamma_0, \mathsf{PSL}(d,\mathbb{R}))$ whose conjugacy class can be continuously deformed to the class of $\rho_d|_{\Gamma_0}$ in the representation variety $\mathrm{Hom}(\Gamma_0, \mathsf{PSL}(d,\mathbb{R}))/\mathsf{PGL}(d,\mathbb{R})$, the exterior wedge power $\wedge^{k} \rho(\Gamma_0)$ is projective Anosov for any $1\le k <d$ (see Labourie~\cite{labourie2005anosovflowssurfacegroups}). The connected component of $\mathrm{Hom}(\Gamma_0, \mathsf{PSL}(d,\mathbb{R}))/\mathsf{PGL}(d,\mathbb{R})$ containing the class of $\rho_d|_{\Gamma_0}$ is called the \emph{Hitchin component}, and representations whose conjugacy classes belong to this component are called \emph{Hitchin representations}. The Anosov property is intensively exploited in~\cite{labourie2005anosovflowssurfacegroups} to study the Hitchin component, which serves as a higher-rank analogue of Teichm\"uller space.

But in the higher-rank setting, even when the subgroup is Anosov, it is difficult to compute the Hausdorff dimension of its limit set. Unlike the rank-one case, the action of $\Gamma$ on $\Lambda^1(\Gamma)$ lacks conformality (which was essential in Patterson and Sullivan's work), and its dimension depends on multiple singular value gaps. There are numerous results under various assumptions; we provide a non-exhaustive list here:

\begin{enumerate}
    \item Glorieux--Monclair--Tholozan~\cite{glorieux2023hausdorffdimensionlimitsets} proved that for a projective Anosov subgroup, the Hausdorff dimension of the symmetric limit set with respect to the Riemannian metric is bounded above by the exponential growth rate of its first singular value gap, and bounded below by the exponential growth rate of a translation length function.
    \item Let $\Gamma$ be a finitely generated group and let $\rho_i:\Gamma\to \mathsf{SO}^{\circ}(n_i,1)$ for $i = 1,\dots,k$ be convex cocompact representations. Kim--Minsky--Oh~\cite{kim2023hausdorffdimensiondirectionallimit} showed that the Hausdorff dimension of the limit set of the diagonal group $(\prod_{i = 1}^{k}\rho_i)(\Gamma)$ equals the maximum of the exponential growth rates of the orbits under the action of $\rho_i(\Gamma)$.
    \item Dey--Kim--Oh~\cite{dey2025ahlforsregularitypattersonsullivanmeasures} proved that for $\theta$-Anosov Zariski-dense subgroups of real semisimple algebraic groups, the Hausdorff dimension of the $\theta$-limit set with respect to the symmetrized visual pre-metric equals the critical exponent of the symmetrized functional determining the visual pre-metric (see also Dey--Kapovich~\cite{Dey_2022}).
    \item Pozzetti--Sambarino--Wienhard~\cite{pozzetti2020conformalityrobustclassnonconformal} proved that if $\Gamma$ acts on $\Lambda^1(\Gamma)$ with a mild conformal property, then the Hausdorff dimension of its limit set equals the exponential growth rate of its first singular value gap. They provided a class of subgroups satisfying this mild conformal property, called $(1,1,2)$-hyperconvex subgroups, and it is known that wedges of the image of Hitchin representations fall into this class. As a result, the picture in the rank-one setting generalizes to the higher Teichm\"uller scenario. (We also note that Farr\'e--Pozzetti--Viaggi~\cite{farre_topological_2025} later showed that $(1,1,2)$-hyperconvex subgroups in $\mathsf{PSL}(d,\mathbb{C})$ are virtually isomorphic to Kleinian groups, indicating an essential connection to the rank-one phenomenon; see also Canary--Zhang--Zimmer~\cite{cazhzikleinian}).
    \item See Bishop--Jones~\cite{bishop1994hausdorffdimensionkleiniangroups}, Canary--Zhang--Zimmer~\cite{CaZhZi}, and many others for results dealing with more general classes of groups beyond Anosov subgroups.
\end{enumerate}

To address this non-conformality, we note that philosophically, the projective limit set of an Anosov subgroup shares common features with self-affine sets. Pozzetti--Sambarino--Wienhard~\cite{Liplim} thus employed the affinity exponent---a concept that first appeared in Kaplan--Yorke~\cite{affinityexponentkaplanyorke} and Douady--Oesterl\'e~\cite{affinityexponentdouadyoesterle}, and was later exploited by Falconer~\cite{falconerselfaffine} to study the fractal geometry of self-affine sets---to provide a potentially optimal estimate for the Hausdorff dimension of the limit set as follows:

\vspace{1mm}

Let $\Gamma\subset \mathsf{PGL}(d,\mathbb{R})$ be a discrete subgroup. We consider the piecewise Dirichlet series defined for $p\in \{1,\dots,d-2\}$ (or $p = d-1$) and $s \in [p-1, p]$ (or $s\in[d-2,\infty)$ when $p = d-1$) by
\[
\Phi_\Gamma^{\mathrm{Aff}}(s) := \sum_{\gamma \in \Gamma} \left( \frac{\sigma_2}{\sigma_1}(\gamma) \cdots \frac{\sigma_p}{\sigma_1}(\gamma) \right) \left( \frac{\sigma_{p+1}}{\sigma_1}(\gamma) \right)^{s-(p-1)}.
\]
This is called the \emph{affine series} of $\Gamma$.

The \emph{affinity exponent} of $\Gamma$, denoted by $\alpha(\Gamma)$, is defined as the critical exponent of this affine series:
\[
\alpha(\Gamma) := \inf \left\{ s \ge 0 \;\middle|\; \Phi_\Gamma^{\mathrm{Aff}}(s) < \infty \right\} = \sup \left\{ s \ge 0 \;\middle|\; \Phi_\Gamma^{\mathrm{Aff}}(s) = \infty \right\} \in [0, \infty].
\]

We have the following bound:

\begin{theorem}[{Pozzetti--Sambarino--Wienhard~\cite[Theorem~B]{Liplim}}]\label{pswtheomrem}
Let $\Gamma \subset \mathsf{PGL}(d,\mathbb{R})$ be a projective Anosov subgroup. Then the Hausdorff dimension of the projective limit set $\Lambda^1(\Gamma)$ is bounded above by the affinity exponent:
\[
\dim_H \Lambda^1(\Gamma) \le \alpha(\Gamma).
\]
\end{theorem}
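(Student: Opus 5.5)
The plan is to build, for each scale, an efficient cover of $\Lambda^1(\Gamma)$ by images of a fixed small ball under group elements. Each such image is nearly an ellipsoid, and I will re-cover it by balls in the way Falconer does for self-affine sets. The argument has three steps.

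\emph{Step 1 (cones and shadows).} Since $\Gamma$ is projective Anosov, it is Gromov hyperbolic and $\Lambda^1(\Gamma)\cong\partial\Gamma$. Fix a basepoint and, for $\gamma\in\Gamma$, let $S_\gamma\subset\Lambda^1(\Gamma)$ be the shadow of $\gamma$, namely the limit points of geodesic rays through a uniform neighbourhood of $\gamma$. For every $n$ the shadows of elements of word length $n$ cover $\Lambda^1(\Gamma)$. Write $\gamma=k_\gamma a_\gamma l_\gamma$ (Cartan decomposition). The uniform gap $\sigma_1/\sigma_2(\gamma)\ge e^{n/a-A}$ and the standard Anosov estimates give a fixed ball $B\subset\mathbb{RP}^{d-1}$, at uniform distance from the repelling hyperplanes $l_\gamma^{-1}\langle e_2,\dots,e_d\rangle$ for all $\gamma$ in the cone type, such that $S_\gamma\subset\gamma B$. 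Finitely many such balls suffice, by compactness of the set of cone types.

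\emph{Step 2 (shape of $\gamma B$).} In affine coordinates around $k_\gamma[e_1]$, the map $\gamma$ restricted to $B$ is, up to uniformly bounded distortion, the linear map $\diag(\sigma_2/\sigma_1,\dots,\sigma_d/\sigma_1)(\gamma)$ composed with isometries. Hence $\gamma B$ lies in a bounded multiple of an ellipsoid with semi-axes $\sigma_{j+1}/\sigma_1(\gamma)$, $j=1,\dots,d-1$. The bounded distortion needs a derivative estimate of $\gamma$ on $B$; it follows from the same transversality.

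\emph{Step 3 (Falconer's singular value function).} Fix $s>\alpha(\Gamma)$ with $s\in[p-1,p]$, and put $r_j=\sigma_{j+1}/\sigma_1(\gamma)$, which is decreasing in $j$. An ellipsoid with semi-axes $r_1\ge\dots\ge r_{d-1}$ is covered by about $\prod_{j<p}(r_j/r_p)$ balls of radius $r_p$. The $s$-cost of this cover is therefore
\[
\lesssim\prod_{j=1}^{p-1}r_j\cdot r_p^{\,s-(p-1)},
\]
which is exactly the $\gamma$-term of $\Phi^{\mathrm{Aff}}_\Gamma(s)$. For $p=d-1$ and $s\ge d-2$ the same expression arises. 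Summing over $|\gamma|=n$ gives
\[
\mathcal{H}^s_{\delta_n}(\Lambda^1(\Gamma))\lesssim\sum_{|\gamma|=n}\Phi\text{-term},
\]
with $\delta_n\to0$ exponentially. Since $\Phi^{\mathrm{Aff}}_\Gamma(s)<\infty$, the tail sums over $|\gamma|=n$ tend to $0$. So $\mathcal{H}^s(\Lambda^1(\Gamma))=0$, hence $\dim_H\Lambda^1(\Gamma)\le s$; letting $s\downarrow\alpha(\Gamma)$ proves the claim.

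I expect Step 2 to be the main obstacle: showing that the shadow sits inside a uniformly distorted image of an ellipsoid aligned with the Cartan frame. This uses only the first gap, while the intermediate singular values may lack gaps. I would handle it by working in the affine chart where $\gamma$ acts on $B$ as a projective map $x\mapsto (Mx)/(c+\ell(x))$, with $|c|\ge\sigma_1$ and $|\ell(x)|\le\epsilon\sigma_1$ uniformly on $B$. Then $\gamma B$ is contained in $M'(\text{bounded ball})$, where $M'=\diag(\sigma_2,\dots,\sigma_d)/\sigma_1$ up to orthogonal factors, which needs no further gaps.
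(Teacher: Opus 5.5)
The paper gives no proof of this statement; it quotes it from Pozzetti--Sambarino--Wienhard. Your proposal is the Falconer singular-value covering argument on which their proof is modeled, and it is correct.

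The structure is as follows. At level $n$, you cover $\Lambda^1(\Gamma)$ by the images, under the elements of the word sphere of radius $n$, of regions uniformly transverse to the repelling hyperplanes. You then re-cover each image ellipsoid by balls of radius comparable to $\sigma_{p+1}/\sigma_1(\gamma)$. Finally, you bound $\mathcal{H}^s_{\delta_n}$ by the block of $\Phi^{\mathrm{Aff}}_\Gamma(s)$ over that sphere, which tends to $0$ for $s>\alpha(\Gamma)$. The argument works directly in the Cayley graph, so it needs neither irreducibility nor the convex cocompact realization used elsewhere in the paper.

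Two simplifications are worth noting.

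First, Step~2 is not an obstacle. A point is at angular distance at least $\epsilon$ from $V_{d-1}(\gamma)$ exactly when it lies in $\mathcal{B}((V_{d-1}(\gamma))^{\perp},\cot\epsilon)$. By Lemma~\ref{lemmaofmapofellipse}, $\gamma$ maps this set exactly onto $\mathcal{E}(\gamma,\cot\epsilon)$. Indeed, up to the orthogonal changes of frame $v(\gamma)$ and $u(\gamma)$, $\gamma$ acts between the two tangent-plane charts by the linear map $w\mapsto a(\gamma)w/\sigma_1(\gamma)$. So no intermediate gaps and no distortion estimate are needed.

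Second, the cone-type justification in Step~1 is the vaguest point. It is cleaner to drop the fixed balls and use these $\gamma$-dependent regions directly. The required uniform transversality, namely that $\gamma^{-1}$ applied to the shadow of $\gamma$ stays at angular distance at least $\epsilon$ from $V_{d-1}(\gamma)$ for all but finitely many $\gamma$, then follows from the contradiction argument of Lemma~\ref{lemmaofellipseandshadows}(2). That argument uses only the Cartan property and transversality of the limit set.
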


It is still open to show whether $\dim_H \Lambda^1(\Gamma) = \alpha(\Gamma)$ holds in general. Li--Pan--Xu~\cite[Theorem~1.2]{lipanxu} confirmed this when $d = 3$ and $\Gamma$ is irreducible. They posed the following questions regarding the special dimensions:

\begin{question}\label{question:dim_less_than_2}
If $\Gamma\subset \mathsf{PGL}(3,\mathbb{R})$ is projective Anosov, is $\dim_H \Lambda^1(\Gamma)<2$ always true?
\end{question}

\begin{question}\label{question:hitchin_rigidity}
If $\Gamma\subset \mathsf{PGL}(3,\mathbb{R})$ is irreducible and can be written as $\rho(\Gamma_0)$, where $\Gamma_0$ is a closed surface group and $\rho$ is an Anosov representation, does $\dim_H \Lambda^1(\Gamma) = 1$ imply that $\rho$ is a Hitchin representation?
\end{question}

In this paper, we apply Patterson--Sullivan theory to give affirmative answers to these questions. The proofs essentially rely on geometric constructions arising from the special self-affine complexity at integer dimensions.

\begin{theorem}[Theorem~\ref{theoremoffulldimension}]\label{theoremoffulldimensioninintroduction}
If $\Gamma\subset \mathsf{PGL}(d,\mathbb{R})$ is a projective Anosov subgroup and the Hausdorff dimension of its projective limit set $\dim_H\Lambda^1(\Gamma)$ equals $d-1$, then $d = 2$ and $\Gamma$ is a cocompact lattice in $\mathsf{PGL}(2,\mathbb{R})$.
\end{theorem}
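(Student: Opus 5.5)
Since $\Lambda^1(\Gamma)\subset\mathbb{RP}^{d-1}$ is a proper closed subset whenever it is not everything, and a full-dimensional limit set ought to be forced to have interior, the plan is to show first that $\dim_H\Lambda^1(\Gamma)=d-1$ implies $\Lambda^1(\Gamma)=\mathbb{RP}^{d-1}$, and then to use topology.

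\textbf{Step 1 (positive measure).} Let $\delta=d-1$. By Theorem~\ref{pswtheomrem}, $\alpha(\Gamma)\ge d-1$. I would build a Patterson--Sullivan type measure $\mu$ on $\Lambda^1(\Gamma)$ from the affine series at its critical exponent, in the top range $p=d-1$, $s\ge d-2$. There the weight $\frac{\sigma_2}{\sigma_1}\cdots\frac{\sigma_{d-1}}{\sigma_1}\bigl(\frac{\sigma_d}{\sigma_1}\bigr)^{s-(d-2)}$ is, at $s=d-1$, exactly the Jacobian of $\gamma$ acting on $\mathbb{RP}^{d-1}$ near the attracting point. The goal is to show that $\mu$ is quasi-invariant with Radon--Nikodym cocycle comparable to $|\mathrm{Jac}\,\gamma|^{s/(d-1)}$. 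A shadow lemma should then give $\mu(\text{shadow of }\gamma)\asymp \mathrm{Jac}(\gamma)$, while the Lebesgue measure of the same shadow, which is an ellipsoid with axes $\frac{\sigma_{i}}{\sigma_1}(\gamma)$, is comparable to the same product.

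If instead we only know $\dim_H=d-1$, I would apply a mass distribution or Frostman argument in reverse, using that Anosov shadows form a Vitali-type covering with bounded multiplicity. This should show that $\sum_\gamma \mathrm{Jac}(\gamma)$ diverges, so $\alpha(\Gamma)=d-1$ is attained in divergence type, and that $\mu$ is comparable to Lebesgue measure on $\Lambda^1(\Gamma)$. In particular $\mathrm{Leb}(\Lambda^1(\Gamma))>0$.

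\textbf{Step 2 (density point gives interior).} Pick a Lebesgue density point $x$ of $\Lambda^1(\Gamma)$ that is also conical. Take $\gamma_n$ along a geodesic ray to $x$ and apply $\gamma_n^{-1}$, renormalized by the Cartan decomposition. These maps blow the shrinking ellipsoidal shadows around $x$ up to sets of definite size. Since the ellipsoids are not round, density must be taken with respect to them, which is the main obstacle. I would handle it with a Lebesgue differentiation theorem for the Anosov shadow family, justified by a doubling property of $\mu$, which is comparable to Lebesgue measure by Step 1. The images $\gamma_n^{-1}\Lambda^1(\Gamma)=\Lambda^1(\Gamma)$ then have density tending to $1$ in a fixed open set $U$. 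Because $\Lambda^1(\Gamma)$ is closed, it contains $U$.

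\textbf{Step 3 (topology).} By minimality, $\Lambda^1(\Gamma)=\overline{\Gamma U}$ is open and closed, so $\Lambda^1(\Gamma)=\mathbb{RP}^{d-1}$. For projective Anosov groups the limit map $\partial\Gamma\to\Lambda^1(\Gamma)$ is transverse to a dual limit map $\xi^*:\partial\Gamma\to \mathrm{Gr}_{d-1}$. Transversality means $\xi(y)\notin\xi^*(x)$ for $x\ne y$. Now take any $x\in\partial\Gamma$. The hyperplane $\xi^*(x)\cap\mathbb{RP}^{d-1}$ is nonempty, and for $d\ge3$ it contains infinitely many points, all of which lie in $\Lambda^1(\Gamma)=\mathbb{RP}^{d-1}$. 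So some $y\neq x$ has $\xi(y)\in\xi^*(x)$, a contradiction. Hence $d=2$.

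For $d=2$, $\Gamma\subset\mathsf{PGL}(2,\mathbb{R})$ is convex cocompact with limit set the whole circle, so it is a cocompact lattice.
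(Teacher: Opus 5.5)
Your architecture matches the paper's: positive volume, then blowing up at a density point with $\rho(\gamma_n)^{-1}$, then topology. However, Step~1 is missing its key ingredient. Theorem~\ref{pswtheomrem} gives only $\alpha(\Gamma)\ge d-1$, and divergence of $\sum_\gamma\sigma_1(\gamma)^{-d}$ is again only a lower bound. So ``hence $\alpha(\Gamma)=d-1$'' is a non sequitur, and nothing in your proposal rules out $\alpha(\Gamma)>d-1$. In that case the critical exponent $\delta$ of $\phi_d=d\omega_1$ exceeds $1$ (Lemma~\ref{criticalexponentdomination}). The Patterson--Sullivan measure then gives shadow mass $\asymp e^{-\delta\phi_d(\kappa(\rho(\gamma)))}$, roughly the volume of $\mathcal{E}(\rho(\gamma),c)$ raised to the power $\delta>1$, and the comparison with Lebesgue measure that your Step~1 relies on disappears. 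The paper devotes Proposition~\ref{propositionofexponentupperbound} to exactly this point: every projective Anosov group has $\alpha(\Gamma)\le d-1$. It assumes $\delta^{\phi_d}(\Gamma)>1$, covers $\Lambda^1(\Gamma)$ by shadows of volume $<\epsilon$, extracts a disjoint subfamily via Roblin's lemma, and gets $\mu(\Lambda^1(\Gamma))\lesssim\epsilon^{\delta^{\phi_d}(\Gamma)-1}m_{vol}(\mathbb{RP}^{d-1})\to 0$. Only once $\delta^{\phi_d}(\Gamma)=1$ is known does the shadow lemma give $\mu(\text{shadow})\asymp m_{vol}(\mathcal{E}(\rho(\gamma),c))$, which Proposition~\ref{propositionofwhenupperboundisattained} then feeds into a covering argument for positive volume.

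Step~2 has a second gap, at the point you flag yourself. A doubling property of $\mu$ gives a Vitali covering and differentiation theorem for $\mu$ along shadows, that is, for subsets of $\Lambda^1(\Gamma)$. What you need instead is $m_{vol}(\mathcal{E}_n\setminus\Lambda^1(\Gamma))/m_{vol}(\mathcal{E}_n)\to0$ for the eccentric ellipsoids $\mathcal{E}_n=\mathcal{E}(\rho(\gamma_n),c)\subset\mathbb{RP}^{d-1}$. The part of $\mathcal{E}_n$ off the limit set is invisible to $\mu$, and disjoint shadows do not give disjoint ellipsoids. Lebesgue differentiation is not available for general families of eccentric ellipsoids with varying axes. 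Round-ball density at $p$ does not transfer either: for $d\ge3$ one has $\sigma_{d-1}/\sigma_d\to\infty$, so $\mathcal{E}_n$ fills a vanishing fraction of its circumscribed ball. The paper avoids this by applying ordinary Lebesgue density to the round balls $\mathcal{B}\bigl(p,C_0\frac{\sigma_d}{\sigma_1}(\rho(\gamma_n))\bigr)$ inscribed in the shadow ellipsoids. Their preimages under $\rho(\gamma_n)$, on which the Jacobian has bounded distortion, are needles of definite length collapsing onto the line through $\xi^1(x)$ and $\xi^1(y)$. Density one therefore yields a projective line inside $\Lambda^1(\Gamma)$ (Proposition~\ref{propositionofcontainingaline}), not an open set.

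Your Step~3 is correct and more elementary than the paper's appeal to Kapovich--Benakli. One small fix: openness of $\Lambda^1(\Gamma)$ should come from minimality applied to the closed invariant set $\Lambda^1(\Gamma)\setminus\mathrm{int}\,\Lambda^1(\Gamma)$, not from $\overline{\Gamma U}$. Your transversality trick also finishes directly from a single line $\ell\subset\Lambda^1(\Gamma)$. Each hyperplane $\xi^{d-1}(x)$ meets $\ell$ in a limit point, which must be $\xi^1(x)$. Hence $\Lambda^1(\Gamma)=\ell$, contradicting $\dim_H\Lambda^1(\Gamma)=d-1>1$.
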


\begin{remark}
This improves Ledrappier--Lessa~\cite[Theorem~1.1]{ledrappier2023dimensiongapvariationalprinciple} when $P =\mathcal{Q} = \{1\}$ there.
\end{remark}

\begin{theorem}[Theorem~\ref{equivalenceofdim1}]\label{equivalenceofdim1inintro}
Let $\Gamma_0\subset \mathsf{PSL}(2,\mathbb{R})$ be a closed surface group and let $\rho:\Gamma_0\to \mathsf{PGL}(3,\mathbb{R})$ be an irreducible projective Anosov representation. Then $\dim_H \Lambda^1(\rho(\Gamma_0)) = 1$ if and only if $\rho$ is a Hitchin representation.
\end{theorem}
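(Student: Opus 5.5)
The easy direction is the classical one. If $\rho$ is Hitchin, then by Choi--Goldman $\rho(\Gamma_0)$ divides a strictly convex domain $\Omega\subset\mathbb{RP}^2$. By Benoist, $\partial\Omega$ is a $C^{1+\alpha}$ curve, and it equals $\Lambda^1(\rho(\Gamma_0))$. A $C^1$ Jordan curve has Hausdorff dimension $1$. Alternatively, one can use Pozzetti--Sambarino--Wienhard, since Hitchin representations are $(1,1,2)$-hyperconvex.

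For the converse, assume $\dim_H\Lambda^1=1$ and write $\Gamma=\rho(\Gamma_0)$ and $\xi^1,\xi^2$ for the limit maps. Then $\Lambda^1=\xi^1(\partial\Gamma_0)$ is a Jordan curve. The plan is to show that this curve is $C^1$ with tangent line $\xi^2(x)$ at $\xi^1(x)$, and then deduce convexity.

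\textbf{Step 1 (exponents).} By Li--Pan--Xu, $\dim_H\Lambda^1=\alpha(\Gamma)$. For $d=3$ and $s\le 1$, the affine series is the Poincaré series of $\sigma_2/\sigma_1$. Hence the critical exponent $\delta$ of $\log\frac{\sigma_1}{\sigma_2}$ equals $1$.

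\textbf{Step 2 (Patterson--Sullivan measure).} Take the Patterson--Sullivan measure $\mu$ on $\Lambda^1$ of exponent $\delta=1$ for this functional. By irreducibility we can use the shadow lemma and the Ahlfors-type regularity results of Dey--Kim--Oh. These should give the following: for a shadow $S_\gamma=\gamma\cdot(\text{fixed arc})$, we have $\mu(S_\gamma)\asymp\frac{\sigma_2}{\sigma_1}(\gamma)$. Moreover, $S_\gamma$ is contained in a box of length $\asymp\frac{\sigma_2}{\sigma_1}(\gamma)$ along the line $\xi^2$ and width $\asymp\frac{\sigma_3}{\sigma_1}(\gamma)$.

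\textbf{Step 3 (comparability of diameter and $\sigma_2/\sigma_1$) — the main obstacle.} The key claim is that $\operatorname{diam} S_\gamma\gtrsim\frac{\sigma_2}{\sigma_1}(\gamma)$ uniformly in $\gamma$. A priori the image arc could be crushed much more than the box suggests, which is exactly what happens for non-Hitchin curves.

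I would try to prove it as follows. Cover $\Lambda^1$ by shadows $S_\gamma$ of level $n$. The mass identity $\sum\mu(S_\gamma)\asymp1$ gives $\sum\frac{\sigma_2}{\sigma_1}(\gamma)\asymp1$. If a definite proportion of these shadows had $\operatorname{diam}S_\gamma\le\epsilon\,\frac{\sigma_2}{\sigma_1}(\gamma)$, iterate this along the tree of shadows, using the uniform quasi-multiplicativity of the Anosov cocycle. This would give $\mathcal H^{1}$-efficient covers with total length decaying like $c^k$. A mass-distribution argument with $\mu$ would then force the effective exponent strictly below $1$, i.e. $\dim_H<1$, contradicting the hypothesis. (Also, $\dim_H<1$ is impossible for a Jordan curve in any case.) If this direct route fails, I would instead show that $\mathcal H^1|_{\Lambda^1}$ is finite and positive and comparable to $\mu$. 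That makes the curve rectifiable with tangents a.e., and one then upgrades using equivariance and the conical structure.

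\textbf{Step 4 (tangency and convexity).} Given Step 3, near every point the curve lies in boxes whose width-to-diameter ratio is $\lesssim\frac{\sigma_3}{\sigma_2}(\gamma)\to0$ exponentially. Every point is conical, since the group is Anosov with cocompact action. So the curve is differentiable everywhere, with tangent line $\xi^2(x)$ at $\xi^1(x)$, and $\xi^2$ is continuous; hence $\Lambda^1$ is $C^1$.

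A $C^1$ Jordan curve in $\mathbb{RP}^2$ whose tangent lines satisfy the transversality $\xi^1(y)\notin\xi^2(x)$ for $y\neq x$ meets each tangent line only at the point of tangency. Such a curve is locally strictly convex, hence bounds a strictly convex domain $\Omega$. The domain $\Omega$ is $\Gamma$-invariant and divided by $\Gamma$, since the quotient is a closed surface. By Choi--Goldman, $\rho$ lies in the Hitchin component.
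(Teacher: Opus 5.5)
\textbf{The main gap is the passage from Step~3 to Step~4, and the diagnosis behind it is mistaken.}

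The lower bound $\operatorname{diam} S_\gamma \gtrsim \frac{\sigma_2}{\sigma_1}(\rho(\gamma))$ does not separate Hitchin from non-Hitchin. It holds for every irreducible $\rho$, with no dimension hypothesis. Write $\rho(\gamma)=uav$ and $S_\gamma=\rho(\gamma)\,\xi^1(\hat{\mathcal O}_R(\gamma^{-1}b_0,b_0))$. Then $\operatorname{diam} S_\gamma$ is comparable to $\frac{\sigma_2}{\sigma_1}(\rho(\gamma))$ times the spread of the definite-size arc $\xi^1(\hat{\mathcal O}_R(\gamma^{-1}b_0,b_0))$ across the pencil of lines through $v^{-1}[e_3]=U_1(\rho(\gamma)^{-1})$. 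Suppose this spread degenerated along a sequence $\gamma_n$ with $\gamma_n^{-1}b_0\to y$. Then an open arc of $\Lambda^1(\Gamma)$ would lie on a line through $\xi^1(y)$. Take a hyperbolic $\eta\in\Gamma_0$ with both fixed points in that arc. Then $\rho(\eta)$ preserves the line, and its iterates push the whole curve into it, contradicting irreducibility. The crushing you describe occurs only in reducible situations, e.g.\ the reducible Barbot representation, whose limit set is a projective line.

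Consequently, if Step~4 were valid, every irreducible projective Anosov representation would have a $C^1$ limit curve. That is false for irreducible representations in the Barbot component (cf.\ Corollary~\ref{strictdimensionforbarbot}).

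The concrete error in Step~4 is that a thin box of width $\frac{\sigma_3}{\sigma_1}(\rho(\gamma_n))$, containing an arc of diameter $\asymp\frac{\sigma_2}{\sigma_1}(\rho(\gamma_n))$, says nothing about the arc folding back inside the box. A point $y\notin S_{\gamma_{n+1}}$ may have $\xi^1(y)$ within distance $\lesssim\frac{\sigma_3}{\sigma_1}(\rho(\gamma_n))$ of $\xi^1(x)$. Then the secant through $\xi^1(x)$ and $\xi^1(y)$ is uncontrolled. Differentiability requires the reverse estimate $d_{\angle}(\xi^1(x),\xi^1(y))\gtrsim\frac{\sigma_2}{\sigma_1}(\rho(\gamma_{n+1}))$ for such $y$. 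That is the left inclusion of the non-folding property (Definition~\ref{definitionofnonfolding}), a hyperconvexity-type statement no easier than your goal, and your covering heuristic does not address it.

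\textbf{The way out is your fallback, and Step~2 already contains everything it needs; this is the paper's route.} With $\delta^{\alpha_1}(\Gamma)=1$, the shadow lemma gives $\mu(S_\gamma)\asymp\frac{\sigma_2}{\sigma_1}(\rho(\gamma))$, and Lemma~\ref{shadowdiameter} gives $\operatorname{diam}S_\gamma\lesssim\frac{\sigma_2}{\sigma_1}(\rho(\gamma))$. For each short arc, choose a point $z_{x,y}$ whose $R$-shadow is exactly $(x,y)_{arc}$. This yields $d_{\angle}(\xi^1(x),\xi^1(y))\le C\,\mu((x,y)_{arc})$, so the curve is rectifiable with length at most $C$. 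Only the upper bound on diameters is used. Rectifiability gives tangent lines almost everywhere, and the strongly dynamics preserving property identifies them with $\xi^2(x)$. Continuity of $\xi^2$ then gives $C^1$, and Zhang--Zimmer hypertransversality together with Guichard's characterization gives Hitchin.

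\textbf{Two smaller points.}

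In Step~1, $\alpha(\Gamma)=1$ directly gives only $\delta^{\alpha_1}(\Gamma)\ge 1$. For $s>1$ the affine series is bounded above by $\mathcal{Q}^{\alpha_1}_\Gamma(s)$, not below. The reverse inequality needs $\frac{\sigma_3}{\sigma_2}\ge e^{-C}\bigl(\frac{\sigma_2}{\sigma_1}\bigr)^c$, which holds because both root gaps are comparable to word length.

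In your last paragraph, a tangent line meeting the curve only at the tangency point may still cross it there. So local convexity does not follow until you show the curve is null-homotopic in $\mathbb{RP}^2$; an essential curve has odd mod-$2$ intersection with every line. The paper sidesteps this by using hypertransversality and Guichard's characterization.
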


On the other hand, as we mentioned, the difficulty of computing the Hausdorff dimension of the limit set of Anosov subgroups essentially comes from the non-conformality of the group action on the limit set, which 'folds' the limit set to have multiple scale complexity in Euclidean balls (for instance, if $\Lambda^1(\Gamma)$ is homeomorphic to a circle, the intersection of $\Lambda^1(\Gamma)$ with a small Euclidean ball $B_r(x)$ centered at $x \in \Lambda^1(\Gamma)$ will contain multiple arcs, and the number of these arcs diverges as $r \to 0$). We give a new conformal assumption on the subgroup (inspired by Pozzetti-Sambarino--Wienhard~\cite{pozzetti2020conformalityrobustclassnonconformal}, but the class of such subgroups is strictly larger), called the \emph{non-folding property}, to avoid such difficulty (see Definition~\ref{definitionofnonfolding}, and see Remark~\ref{listofexamples} for various examples satisfying this assumption). It then turns out that this non-folding property implies that the limit sets satisfy a quasi-self-similarity as in Falconer~\cite{falconerselfsimilar1}, which is applied by us to show:

\begin{theorem}[Theorem~\ref{theoremofnonfoldingdimension}]\label{nonfoldingdimensioninintroduction}
Let $\Gamma\subset \mathsf{PGL}(d,\mathbb{R})$ be a projective non-folding Anosov subgroup. Then 
\[
\dim_H \big(\Lambda^1(\Gamma)\big) =  \delta^{\alpha_1}(\Gamma),
\]
where $\delta^{\alpha_1}(\Gamma)$ is the critical exponent defined by the convergence abscissa of the associated Poincar\'e series:
\[
\mathcal{Q}_{\Gamma}^{\alpha_1}(s) = \sum_{\gamma \in \Gamma} \left(\frac{\sigma_2}{\sigma_1}(\gamma) \right)^{s}.
\]
\end{theorem}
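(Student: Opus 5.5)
The plan is to prove the two inequalities $\dim_H \Lambda^1(\Gamma)\le \delta^{\alpha_1}(\Gamma)$ and $\dim_H\Lambda^1(\Gamma)\ge \delta^{\alpha_1}(\Gamma)$ separately. The upper bound should come from a covering argument that uses non-folding; the lower bound should come from the quasi-self-similarity that non-folding provides, combined with Falconer's implicit theorem~\cite{falconerselfsimilar1}. Throughout I use the standard Anosov dictionary. For $\gamma\in\Gamma$ with $d_{word}(\mathrm{id},\gamma)=n$, let $U_1(\gamma)$ be the attracting direction. Define the \emph{shadow}
\[
\mathcal{O}(\gamma) = \gamma\cdot\{\text{limit points in a fixed cone}\} = \{\xi\in\Lambda^1(\Gamma) : \text{a geodesic ray from }\mathrm{id}\text{ to }\xi\text{ passes near }\gamma\}.
\]
Then $\mathcal{O}(\gamma)$ is a neighborhood of $U_1(\gamma)$ in $\Lambda^1(\Gamma)$, and its Euclidean diameter is $\lesssim \frac{\sigma_2}{\sigma_1}(\gamma)$. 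This diameter bound is true for every projective Anosov subgroup.

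\emph{Upper bound.} For each $n$, the shadows $\{\mathcal{O}(\gamma): |\gamma|=n\}$ cover $\Lambda^1(\Gamma)$. Hence for every $s>\delta^{\alpha_1}(\Gamma)$,
\[
\mathcal{H}^s_\infty(\Lambda^1(\Gamma))\lesssim \sum_{|\gamma|=n}\Big(\frac{\sigma_2}{\sigma_1}(\gamma)\Big)^s,
\]
and the right-hand side tends to $0$ along a subsequence because $\mathcal{Q}^{\alpha_1}_\Gamma(s)<\infty$. This gives $\dim_H\le\delta^{\alpha_1}(\Gamma)$. Note that this step does not actually need non-folding; it is the easy direction, consistent with Glorieux--Monclair--Tholozan.

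\emph{Lower bound.} Here I would use Definition~\ref{definitionofnonfolding}, which I expect to say roughly the following. The inverse map $\gamma^{-1}$, restricted to the shadow $\mathcal{O}(\gamma)$, is a bi-Lipschitz-up-to-constants rescaling by the factor $\frac{\sigma_1}{\sigma_2}(\gamma)$ onto a set of definite size in $\Lambda^1(\Gamma)$. More precisely, for $x,y\in\mathcal{O}(\gamma)$,
\[
d(\gamma^{-1}x,\gamma^{-1}y)\asymp \frac{\sigma_1}{\sigma_2}(\gamma)\, d(x,y),
\]
with constants uniform in $\gamma$. The point is that the limit set does not fold into the $\sigma_3$-directions inside the shadow. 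From this I obtain Falconer's quasi-self-similarity hypothesis: for every small ball $B$ centered on $\Lambda^1(\Gamma)$ of radius $r$, choose $\gamma$ with $\mathcal{O}(\gamma)\subset B$ and $\frac{\sigma_2}{\sigma_1}(\gamma)\asymp r$. Then $\gamma$ gives a map $\Lambda^1(\Gamma)\to\Lambda^1(\Gamma)\cap B$ with contraction $\asymp r$ in both directions. This is the (lower) quasi-self-similar condition, so Falconer's implicit theorem gives
\[
\dim_H\Lambda^1(\Gamma)=\underline{\dim}_B\Lambda^1(\Gamma)=\overline{\dim}_B\Lambda^1(\Gamma).
\]
It then suffices to show $\underline{\dim}_B\ge\delta^{\alpha_1}(\Gamma)$. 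For this I compare the covering number $N_r$ with the count of $\gamma$ satisfying $\frac{\sigma_2}{\sigma_1}(\gamma)\in[r,Cr]$. Non-folding ensures that shadows of comparable scale are roughly disjoint up to bounded multiplicity, and that an $r$-ball meets only boundedly many of them, because folding is exactly what would allow many shadows in one ball. This yields $N_r\gtrsim\#\{\gamma: \frac{\sigma_2}{\sigma_1}(\gamma)\in[r,Cr]\}$. Since $\log\frac{\sigma_1}{\sigma_2}$ is comparable to word length and is coarsely additive along geodesics (Anosov), that count grows like $r^{-\delta^{\alpha_1}(\Gamma)+o(1)}$ along a sequence $r\to0$, and a standard subadditivity argument upgrades this to all $r$.

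The main obstacle is extracting the two-sided distortion estimate and the bounded multiplicity from the precise non-folding definition, uniformly over all $\gamma$. I expect the hardest part to be controlling points of $\mathcal{O}(\gamma)$ whose preimages lie near the repelling hyperplane of $\gamma^{-1}$. I would first handle this by shrinking shadows to cones bounded away from the repelling hyperplane, which is possible by the Anosov transversality of the limit maps, and then apply non-folding inside these shrunken shadows.
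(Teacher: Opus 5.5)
There is a genuine gap, in the quasi-self-similarity step of the lower bound.

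Your upper bound is fine. It is the covering argument behind Canary--Zhang--Zimmer's Corollary~5.2, which the paper cites, and it uses only the outer shadow bound (Lemma~\ref{lemmaofellipseandshadows}(2)), valid for every projective Anosov subgroup.

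\textbf{The missing estimate.} Definition~\ref{definitionofnonfolding} is only a sandwich. A shadow $\xi^1(\hat{\mathcal{O}}_R(b_0,z))$ with $z$ near $\gamma b_0$ contains, and is contained in, $d_{\angle}$-balls of radius $\asymp e^{-\alpha_1(\kappa(\rho(\gamma)))}$ intersected with the limit set. This controls the footprint of the shadow, not pairwise distances inside it. The two-sided estimate $d(\gamma^{-1}x,\gamma^{-1}y)\asymp\frac{\sigma_1}{\sigma_2}(\gamma)\,d(x,y)$ that you postulate does not follow from it.

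Combined with the inclusion of the shadow in $\mathcal{E}(\rho(\gamma),C)$, linear algebra gives only the lower bound $d_{\angle}(\rho(\gamma)^{-1}p_1,\rho(\gamma)^{-1}p_2)\ge C'\frac{\sigma_1}{\sigma_2}(\rho(\gamma))\,d_{\angle}(p_1,p_2)$. This is the Observation in the proof of Proposition~\ref{propositionofquasiselfsimilarityofthelimitset}. The reverse inequality can fail, because $\rho(\gamma)^{-1}$ expands the directions $u(\rho(\gamma))[e_j]$ by $\frac{\sigma_1}{\sigma_j}$, which may be far larger than $\frac{\sigma_1}{\sigma_2}$. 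In effect the reverse inequality would require $\alpha_1\circ\kappa\circ\rho$ to be coarsely additive along geodesics, whereas projective Anosov only gives coarse superadditivity.

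\textbf{Why the contractive criterion is not verified.} The contractive form of Falconer's criterion you invoke needs exactly the missing half: a map $g$ defined on all of $\Lambda^1(\Gamma)$, with values in $\Lambda^1(\Gamma)\cap B$, satisfying $d(gx,gy)\ge a r\,d(x,y)$. The element $\rho(\gamma)$ provides neither property.
\begin{itemize}
\item It maps the limit set onto itself, and only the part away from its repelling point lands in the shadow.
\item On that part, its contraction is guaranteed to be bounded below only by roughly $\frac{\sigma_d}{\sigma_1}(\rho(\gamma))$, not by $\frac{\sigma_2}{\sigma_1}(\rho(\gamma))$.
\end{itemize}

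\textbf{The repair.} Use the expansive form instead, namely Definition~\ref{definitionofquasiselfsimilarity} with Theorem~\ref{theoremoffalconerdimension}; this is what the paper does, and it needs only the half you can prove.
\begin{itemize}
\item Given $N$ of small diameter, pick $p=\xi^1(x)\in N$ and a quasi-geodesic ray $(\gamma_n)$ towards $x$ as in Lemma~\ref{lemmaofquasigeodesics}.
\item Since $\alpha_1(\kappa(\rho(\gamma_n)))\to\infty$ with bounded jumps (Lemma~\ref{lemmaofuniformcontinuity}), choose $n'$ with $C\,\mathrm{diam}(N)\le e^{-\alpha_1(\kappa(\rho(\gamma_{n'})))}\le C''C\,\mathrm{diam}(N)$.
\item The inner inclusion of non-folding puts $N$ inside $\xi^1(\hat{\mathcal{O}}_R(b_0,\gamma_{n'}b_0))\subset\mathcal{E}(\rho(\gamma_{n'}),C)$.
\item Then $\phi=\rho(\gamma_{n'})^{-1}|_N$ satisfies the expansive condition by the one-sided expansion bound alone.
\end{itemize}

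\textbf{Two smaller corrections in your box-counting step.}
\begin{itemize}
\item Bounded multiplicity of same-scale shadows does not come from non-folding. It comes from Lemma~\ref{lemmaofdisjointshadows}, together with a finite partition of $\Gamma_0$ into classes whose $\mathcal{A}_n(\alpha_1)$-orbits are $C_0$-separated. Non-folding's role there is that each shadow contains a ball of radius $\frac1C e^{-\alpha_1}$ about a limit point, so disjoint shadows give a packing.
\item You only need $\overline{\dim}_B\ge\delta^{\alpha_1}(\Gamma)$, which follows from the $\limsup$ in Lemma~\ref{lemmaofdivergence}, since Falconer's theorem already equates the lower and upper box dimensions. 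Your ``subadditivity upgrade to all $r$'' would rest on a coarse additivity of $\alpha_1$ that projective Anosov does not provide, and it is unnecessary.
\end{itemize}
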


Referring to Remark~\ref{listofexamples}, it is worth noting that the equality between the Hausdorff dimension of the limit set and the simple root critical exponent has already been established for most of the examples listed therein (see the remark for related references). However, we emphasize the novelty that this result also holds for all $\Theta$-positive representations and all the roots in $\Theta$, and it partially answers a question raised by Yao~\cite[Remark~1.9]{yaothetapositive}. We state it explicitly here using the notation from \cite[Section~1]{yaothetapositive}:

\begin{corollary}\label{corollaryofthetapositive}
Let $G$ be a simple Lie group with $\Theta$-positive structure, let $\Gamma_0$ be a convex cocompact Fuchsian group, and let $\rho:\Gamma_0\to G$ be a $\Theta$-positive representation. Then for any $\alpha\in \Theta$, if $\xi^{\alpha}:\Lambda(\Gamma_0)\to \mathcal{F}_{\alpha}$ is the limit map to the $\alpha$-flag manifold $\mathcal{F}_{\alpha}$ and $\delta^{\alpha}_{\rho}(\Gamma_0)$ is the $\alpha$-critical exponent, we have
\[
\dim_H \big(\xi^{\alpha}(\Lambda(\Gamma_0))\big) = \delta^{\alpha}_{\rho}(\Gamma_0).
\]
\end{corollary}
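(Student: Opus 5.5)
The plan is to reduce the corollary to Theorem~\ref{nonfoldingdimensioninintroduction}, applied to a suitable linear representation of $G$. Fix $\alpha\in\Theta$.

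\textbf{Step 1: linearize the $\alpha$-flag manifold.} By a theorem of Tits (as used in Gu\'eritaud--Guichard--Kassel--Wienhard), there is an irreducible proximal representation $\tau_\alpha:G\to \mathsf{PGL}(V_\alpha)$ whose highest restricted weight $\chi_\alpha$ is a multiple of the fundamental weight of $\alpha$. It has the following properties.
\begin{itemize}
\item It induces a smooth equivariant embedding $\iota_\alpha:\mathcal{F}_\alpha\hookrightarrow \mathbb{P}(V_\alpha)$.
\item The first singular value gap satisfies $\log\frac{\sigma_1}{\sigma_2}(\tau_\alpha(g))=\alpha(\kappa(g))$, where $\kappa$ is the Cartan projection. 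This holds because the second weight is $\chi_\alpha-\alpha$.
\end{itemize}
Since $\rho$ is $\Theta$-positive, it is $\Theta$-Anosov, and in particular $\{\alpha\}$-Anosov. Hence $\Gamma:=\tau_\alpha\rho(\Gamma_0)$ is projective Anosov, with $\Lambda^1(\Gamma)=\iota_\alpha\xi^\alpha(\Lambda(\Gamma_0))$. The Poincar\'e series $\mathcal{Q}^{\alpha_1}_\Gamma$ coincides with the $\alpha$-Poincar\'e series of $\rho$, so $\delta^{\alpha_1}(\Gamma)=\delta^\alpha_\rho(\Gamma_0)$. Possible finite kernels only change the series by a bounded factor.

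Because $\iota_\alpha$ is a smooth embedding of a compact manifold, it is bi-Lipschitz onto its image. Therefore Hausdorff dimension is preserved, and it suffices to prove
\[
\dim_H\Lambda^1(\Gamma)=\delta^{\alpha_1}(\Gamma).
\]
This would follow from Theorem~\ref{nonfoldingdimensioninintroduction} once we show that $\Gamma$ is projective non-folding in the sense of Definition~\ref{definitionofnonfolding}.

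\textbf{Step 2: verify non-folding.} This is the main obstacle. I expect non-folding to mean roughly the following: for $x\in\Lambda^1(\Gamma)$ and small $r$, the set $B_r(x)\cap\Lambda^1(\Gamma)$ is comparable to a single image $\gamma\cdot(\text{piece of }\Lambda^1)$. Equivalently, nearby limit points are separated from the direction $\xi^1(x)$ mainly in the $\sigma_2$-direction, with no folding back coming from smaller singular directions.

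To verify this, I would use the positivity of the limit curve. For a positive triple $(x,y,z)$, the point $\xi^\alpha(y)$ lies in the positive cone $U^{>0}_\Theta$ transverse to both $\xi(x)$ and $\xi(z)$. In the affine chart centered at $\xi^\alpha(x)$ and adapted to $\xi(z)$, this forces the curve to be a graph over the $\alpha$-root space $\mathfrak{u}_\alpha$. More precisely, the leading term of the displacement lies in the projection onto the $\alpha$-root space, and positivity (the $\Theta$-positive cone is acute and $\mathsf{L}_\Theta^\circ$-invariant) gives a uniform lower bound on that projection.

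Composing with $\tau_\alpha$, the $\alpha$-root space corresponds to the $\sigma_2$-eigendirection of $V_\alpha$. This gives the required uniform transversality: distances $d(\xi^1(x),\xi^1(y))$ are comparable to the component along the second singular direction. The constants are uniform by cocompactness of the geodesic flow on $\Lambda(\Gamma_0)$, or by equicontinuity of the limit map.

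\textbf{Step 3: uniformity.} The delicate point is showing that the comparison constant does not degenerate along a sequence of $\gamma$'s, i.e.\ that it is uniform over the cone. I would get this from a compactness argument over positive triples normalized by $\Gamma_0$. Since $\Gamma_0$ acts cocompactly on triples of distinct points of $\Lambda(\Gamma_0)$ in the convex cocompact case, this reduces to positivity being an open condition on a compact set. With non-folding established, Theorem~\ref{nonfoldingdimensioninintroduction} concludes.
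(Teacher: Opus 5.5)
\textbf{Verdict: genuine gap in Step~2.} Your Step~1 is the reduction the paper has in mind (Remark~\ref{listofexamples}(3)), and it is fine:
\begin{itemize}
\item compose with an adapted representation as in \cite[Lemma~2.2, Proposition~2.16]{yaothetapositive};
\item identify $\delta^{\alpha}_\rho$ with $\delta^{\alpha_1}$ and transfer Hausdorff dimension through a bi-Lipschitz embedding;
\item apply Theorem~\ref{theoremofnonfoldingdimension}.
\end{itemize}

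\textbf{Step~2 does not verify non-folding.} What you sketch is at best a single-scale two-point distortion estimate, essentially the regular distortion property of Definition~\ref{definitionofregulardistortion}. That is not the non-folding property of Definition~\ref{definitionofnonfolding}. Non-folding asks that every limit point within $\frac1C e^{-\alpha_1(\kappa(\rho(\gamma)))}$ of $\xi^1(x)$ come from the shadow $\hat{\mathcal{O}}_R(b_0,z)$, where $z\in[b_0,x)$ is near $\gamma b_0$.

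The points that threaten this are those $y$ whose ray from $b_0$ leaves $[b_0,x)$ well before $z$. For such $y$, the point $\gamma^{-1}y$ can be arbitrarily close to the coarse repeller $\omega_\gamma$. The normalized triple is then not $d$-bounded, and no estimate at the scale of $\gamma$ says anything about it. Your compactness over normalized triples does not help either: it only makes the constant uniform at a single scale.

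The paper closes this in Proposition~\ref{shadowradiusstrong} (Appendix~\ref{append1}) in three steps:
\begin{itemize}
\item Lemma~\ref{lem9} shows the branch point $w=\pi_x(y)$ cannot lie far beyond $z$ on $[b_0,x)$.
\item Lemma~\ref{lem10} applies the distortion estimate at an element $\beta$ with $\beta b_0$ near $w$, giving $d_{\angle}(\xi^1(x),\xi^1(y))\gtrsim e^{-\alpha_1(\kappa(\rho(\beta)))}$.
\item Coarse additivity of $\alpha_1\circ\kappa\circ\rho$ along geodesics \cite[Lemma~6.6]{CaZhZi} gives $\alpha_1(\kappa(\rho(\beta)))\le \alpha_1(\kappa(\rho(\gamma)))+C_0$.
\end{itemize}
This cross-scale comparison is absent from your Steps~2--3, so they do not establish the hypothesis of Theorem~\ref{theoremofnonfoldingdimension}.

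\textbf{The positivity input is also misstated.} A lower bound on the projection to $\mathfrak{u}_\alpha=\bigoplus_{\beta\in\alpha+\mathrm{span}_{\ge 0}(\Delta\setminus\Theta)}\mathfrak{g}_\beta$ does not yield the rate $e^{-\alpha(\kappa)}$. After $\tau_\alpha$, only the $\mathfrak{g}_\alpha$-direction (weight $\chi_\alpha-\alpha$) is the $\sigma_2$-direction. The other summands are contracted at rates $e^{-\beta(\kappa(\rho(\gamma)))}$, and the roots in $\Delta\setminus\Theta$ need not stay bounded on $\kappa(\rho(\Gamma_0))$. For example, $\mathfrak{u}_\alpha\neq\mathfrak{g}_\alpha$ for $\mathsf{SO}(p,q)$ with $\alpha=\alpha_{p-1}$.

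To make this step work you would need two further ingredients:
\begin{itemize}
\item vectors in the open cone $c_\alpha$ have non-zero $\mathfrak{g}_\alpha$-component;
\item a perturbation argument, because the flag at infinity is the Cartan repeller $V(\rho(\gamma))$, not a limit point, so the relevant triple is not in a compact set of positive triples.
\end{itemize}
That is the content of \cite[Theorem~4.2]{yaothetapositive}, which the paper cites rather than reproves.

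\textbf{Repair.} Establish or cite the regular distortion property, then invoke Proposition~\ref{shadowradiusstrong}, instead of trying to check non-folding directly.
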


\textbf{Acknowledgements.}
The author sincerely thanks Jialun Li and Disheng Xu for introducing him to the related questions and for many fruitful discussions. He is also grateful to his advisor, Tengren Zhang, for his continuous guidance and support. Finally, the author would like to thank the organizers of the Winter Workshop on Dynamical Systems and Mathematical Physics, held at Great Bay University from December 15 to 19, 2025, which facilitated the development of this paper.

\section{Notation and Conventions}\label{Sectionofconventions}
\subsection{Linear Algebra}
Let $\langle-,-\rangle$ be a fixed Euclidean metric on $\mathbb{R}^{d}$ where the induced norm is denoted by $\|\cdot\|$, and let $\{e_1,\dots, e_d\}$ be a fixed orthonormal basis. Let $\mathsf{PGL}(d,\mathbb{R})$ be the projective general linear group, and let $\mathsf{PO}(d,\mathbb{R})$ be the projective orthogonal group with respect to $\langle-,-\rangle$. Let $\mathfrak{sl}(d,\mathbb{R})$ and $\mathfrak{so}(d,\mathbb{R})$ denote their respective Lie algebras.

By a slight abuse of notation, for any element $g \in \mathsf{PGL}(d,\mathbb{R})$, we implicitly choose a representative matrix $g \in \mathsf{GL}(d,\mathbb{R})$ such that $|\det(g)| = 1$. Note that this choice of representative does not affect the well-definedness of the subsequent definitions. For each $g\in\mathsf{PGL}(d,\mathbb{R})$ and $1\le i\le d$, let $\sigma_i(g)$ be the $i$-th singular value (of its representative). We choose a singular value decomposition $g = u(g)a(g)v(g)$ (which might not be unique), where $u(g),v(g)\in \mathsf{O}(d,\mathbb{R})$ and $a(g) = \diag(\sigma_1(g),\dots,\sigma_d(g))$.

Let $\mathfrak{a}\subset \mathfrak{sl}(d,\mathbb{R})$ be the \emph{Cartan subalgebra} consisting of diagonal trace-zero matrices. For each $0<i<d$, the $i$-th \emph{simple root} $\alpha_i\in \mathfrak{a}^*$ is defined as
\[
\alpha_i(\diag(\lambda_1,\dots, \lambda_d)) = \lambda_i-\lambda_{i+1}.
\]
Correspondingly, the $i$-th \emph{fundamental weight} is defined as 
\[
\omega_i(\diag(\lambda_1,\dots, \lambda_d)) = \lambda_1+\dots+\lambda_{i}.
\]
The \emph{positive Weyl chamber} $\mathfrak{a}^+\subset \mathfrak{a}$ is identified as
\[
\mathfrak{a}^+ := \{\diag(\lambda_1,\dots, \lambda_d)\mid \lambda_1>\dots >\lambda_d\}.
\]
The \emph{Cartan projection} $\kappa: \mathsf{PGL}(d,\mathbb{R})\to \overline{\mathfrak{a}^+}$ is defined by
\[
g\mapsto \diag(\log \sigma_1(g),\dots, \log \sigma_d(g)).
\]

We have the uniform continuity of the Cartan projection (whose proof is standard; see, for example, Canary~\cite[Lemma~29.1]{AnosovNotes}) stated as follows:

\begin{lemma}\label{lemmaofuniformcontinuity}
For any compact subset $K\subset \mathsf{PGL}(d,\mathbb{R})$, there exists a constant $C$ such that for any $g\in \mathsf{PGL}(d,\mathbb{R})$ and $h\in K$,
\[
\|\kappa(gh)-\kappa(g)\|\le C, \quad \|\kappa(hg)-\kappa(g)\| \le C.
\]
\end{lemma}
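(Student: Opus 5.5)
The plan is to bound each singular value of $gh$ and $hg$ in terms of the corresponding singular value of $g$, using the operator norm and the Courant--Fischer minimax characterization. We work with the normalized representatives with $|\det|=1$ fixed in Section~\ref{Sectionofconventions}. Note that $\det(gh)=\det(g)\det(h)$, so the product of representatives is again a valid representative. Hence $\kappa(gh)$ is computed from the matrix product of the chosen representatives and no rescaling ambiguity arises.

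Since $K$ is compact in $\mathsf{PGL}(d,\mathbb{R})$, we first lift it to a compact set of determinant $\pm1$ representatives in $\mathsf{GL}(d,\mathbb{R})$. This is possible because the projection from $\{|\det|=1\}$ is a finite-to-one covering with fiber $\{\pm 1\}$ and is proper. Set
\[
M:=\sup_{h\in K}\max\bigl(\|h\|,\|h^{-1}\|\bigr)<\infty,
\]
which is finite because inversion is continuous and the set of lifts is compact.

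The key step is the inequality $\sigma_i(gh)\le \sigma_i(g)\|h\|$ for every $i$. By Courant--Fischer,
\[
\sigma_i(A)=\max_{\dim V=i}\ \min_{v\in V,\|v\|=1}\|Av\| = \min_{\dim W=d-i+1}\ \max_{v\in W,\|v\|=1}\|Av\|.
\]
For $hg$ this gives $\|hgv\|\le \|h\|\,\|gv\|$, hence $\sigma_i(hg)\le\|h\|\sigma_i(g)$. For $gh$, we use $\sigma_i(gh)=\sigma_i((gh)^T)=\sigma_i(h^Tg^T)$ together with $\|h^T\|=\|h\|$ and reduce to the previous case. Applying the same inequality to $g=(gh)h^{-1}$ and $g=h^{-1}(hg)$ gives the reverse bounds $\sigma_i(g)\le \sigma_i(gh)\|h^{-1}\|$, and similarly for $hg$. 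Taking logarithms yields
\[
|\log\sigma_i(gh)-\log\sigma_i(g)|\le \log M \quad\text{for each } i,
\]
and likewise for $hg$. Since $\kappa$ is the diagonal of $\log\sigma_i$ and all norms on $\mathfrak{a}$ are equivalent, we obtain $\|\kappa(gh)-\kappa(g)\|\le C:=c_d\log M$, and similarly for $hg$.

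There is no real obstacle in this argument. The only point requiring care is checking that the compactness of $K$ in $\mathsf{PGL}(d,\mathbb{R})$ passes to the normalized lifts, so that $\|h\|$ and $\|h^{-1}\|$ are uniformly bounded.
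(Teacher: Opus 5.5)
Your proposal is correct and is essentially the standard argument the paper defers to (the paper cites Canary's notes rather than giving a proof). That argument is the two-sided bound $\sigma_i(g)\,\sigma_d(h)\le \sigma_i(gh)\le \sigma_i(g)\,\sigma_1(h)$ and its analogue for $hg$, obtained via Courant--Fischer, combined with uniform bounds on $\|h\|$ and $\|h^{-1}\|$ over the compact set of $|\det|=1$ lifts of $K$; your handling of the normalization and of the properness of the two-to-one projection is also fine.
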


\subsection{Flag Manifold}

Let $\Delta = \{1,\dots, d-1\}$. For each non-empty subset $\theta = \{i_1, \dots, i_k\} \subset \Delta$ with $i_1 < \dots < i_k$, the \emph{$\theta$-flag manifold} $\mathcal{F}_{\theta}(\mathbb{R}^d)$ (abbreviated as $\mathcal{F}_\theta$ when there is no confusion) is defined as the space of partial flags of type $\theta$:
\[
\mathcal{F}_\theta := \{ (W_1, \dots, W_k) \mid 0 \subsetneq W_1 \subsetneq \dots \subsetneq W_k \subsetneq \mathbb{R}^d, \, \dim(W_j) = i_j \text{ for } 1 \le j \le k \}.
\]
For each $F\in \mathcal{F}_{\theta}$ and $i\in \theta$, we use $F^{(i)}$ to denote its $i$-dimensional subspace. Let $F_\theta \in \mathcal{F}_\theta$ be the standard $\theta$-flag, where $F_{\theta}^{(i_j)}$ is spanned by $\{e_1, \dots, e_{i_j}\}$. Similarly, let $F_\theta^{opp}$ be the standard opposite $\theta$-flag, where $(F_{\theta}^{opp})^{(i_j)}$ is spanned by $\{e_{d}, e_{d-1}, \dots, e_{d-i_j+1}\}$. 

Note that for each non-empty $\theta\subset \Delta$, $\mathsf{PGL}(d,\mathbb{R})$ acts transitively on $\mathcal{F}_{\theta}$. We define the standard \emph{parabolic subgroup} $\mathsf{P}_\theta$ as the stabilizer of $F_\theta$ in $\mathsf{PGL}(d,\mathbb{R})$, and the opposite parabolic subgroup $\mathsf{P}_{\theta}^{opp}$ as the stabilizer of $F_\theta^{opp}$. Let $\mathsf{U}_{\theta}$ and $\mathsf{U}_{\theta}^{opp}$ denote their respective unipotent radicals. The \emph{Levi subgroup} $\mathsf{L}_\theta$ is defined as the intersection $\mathsf{L}_\theta = \mathsf{P}_\theta \cap \mathsf{P}_{\theta}^{opp}$. Their respective Lie algebras are denoted by $\mathfrak{p}_{\theta}, \mathfrak{p}_{\theta}^{opp}, \mathfrak{u}_{\theta}, \mathfrak{u}_{\theta}^{opp}$, and $\mathfrak{l}_{\theta}$.

\vspace{3mm}

Let $\iota: \Delta \to \Delta$ be the involution defined by $i \mapsto d-i$. We denote $\theta^{opp} := \{\iota(i) \mid i \in \theta\}$, and we say a subset $\theta$ is \emph{symmetric} if $\theta = \theta^{opp}$. 
For any non-empty $\theta \subset \Delta$, two flags $F_1 \in \mathcal{F}_{\theta}$ and $F_2 \in \mathcal{F}_{\theta^{opp}}$ are said to be \emph{transverse} if for each $i \in \theta$, the subspace $F_1^{(i)}$ is transverse to $F_2^{(d-i)}$.

\vspace{3mm}

For each $0 < i < d$, an element $g \in \mathsf{PGL}(d,\mathbb{R})$ is said to be \emph{$i$-divergent} if $\sigma_i(g) > \sigma_{i+1}(g)$. We say $g$ is \emph{$\theta$-divergent} if it is $i$-divergent for all $i \in \theta$. For a $\theta$-divergent element $g$, the \emph{Cartan attractor} $U_{\theta}(g) \in \mathcal{F}_{\theta}$ is defined as $u(g)F_{\theta}$. The $\theta$-divergence condition ensures that this is well-defined. Similarly, the \emph{Cartan repeller} $V_{\theta^{opp}}(g) \in \mathcal{F}_{\theta^{opp}}$ is defined as the Cartan attractor of $g^{-1}$ (note that $g$ being $\theta$-divergent is equivalent to $g^{-1}$ being $\theta^{opp}$-divergent).

\subsection{The Projective Space}
When $\theta = \{1\}$, $\mathcal{F}_{\theta}$ is identified with the projective space $\mathbb{RP}^{d-1}$, and when $\theta = \{d-1\}$, $\mathcal{F}_{\theta}$ is identified with the space of hyperplanes, which is exactly the projective space of the dual of $\mathbb{R}^{d}$ (denoted by $(\mathbb{RP}^{d-1})^*$). We will discuss these spaces in more detail below.
\subsubsection{The metric}
We use $[v]$ to denote elements in $\mathbb{RP}^{d-1}$, where $v\in \mathbb{R}^{d}\setminus \{0\}$ is a representative. Similarly, we use $[\phi]$ to denote elements in $(\mathbb{RP}^{d-1})^* \setminus \{0\}$, where $\phi\in (\mathbb{R}^{d})^*$ is a representative. By a slight abuse of notation, under certain context we also refer to $\phi$ as a covector in $\mathbb{R}^{d}$ satisfying 
\[
\forall v\in \mathbb{R}^{d},\quad \phi(v) = \langle \phi, v\rangle,
\]
and we use $\mathcal{H}_{\phi}$ to denote the hyperplane perpendicular to $\phi$, which is identified with $[\phi]$.

We define the angular metric on $\mathbb{RP}^{d-1}$ by
\[
d_{\angle}([v_1],[v_2]) = \arccos\big(\frac{|\langle v_1,v_2\rangle|}{\|v_1\|\cdot \|v_2\|}\big).
\]
Similarly, the angular metric on $(\mathbb{RP}^{d-1})^{*}$ (also denoted by $d_{\angle}(-,-)$) is defined by
\[
d_{\angle}([\phi_1],[\phi_2]) = \arccos\big(\frac{|\langle \phi_1,\phi_2\rangle|}{\|\phi_1\|\cdot \|\phi_2\|}\big),
\]
where $\phi_1,\phi_2$ are identified as covectors in $\mathbb{R}^{d}$. Note that these definitions are independent of the choice of representatives.

\subsubsection{The volume}\label{subsectionofvolume}
The angular metric induces an unsigned volume form $d\sigma$ on $\mathbb{RP}^{d-1}$ (and similarly on $(\mathbb{RP}^{d-1})^*$), and we denote the induced measure by $m_{vol}$.

Note that for each $[v]\in \mathbb{RP}^{d-1}$, if we choose a representative $\hat v$ such that $\|\hat v\| = 1$, the tangent space at $[v]$ can be identified with the hyperplane perpendicular to $\hat v$ (denoted by $TS^{d-1}_{\hat v}$), which is tangent to the unit sphere. Under this identification, the Riemannian metric induced by the angular metric coincides with the Euclidean metric $\langle-,-\rangle$ restricted to $TS^{d-1}_{\hat v}$. For any $d-1$ tangent vectors
\[
w_1,\dots, w_{d-1}\in TS^{d-1}_{\hat v}
\]
which can be identified as vectors in $\mathbb{R}^{d}$, the unsigned volume form $d\sigma$ can be computed as follows:
\[
d\sigma|_{[v]}(w_1,\dots, w_{d-1}) = |\det (\hat v, w_1,\dots, w_{d-1})|.
\]

We record the transformation cocycle of $d\sigma$ for future reference. For any $g\in \mathsf{PGL}(d,\mathbb{R})$, recall that $g$ implicitly represents an element in $\mathsf{GL}(d,\mathbb{R})$ whose determinant has absolute value $1$. Let $\mathcal{D}g$ denote the differential map of $g$, let $\pi_{\perp}$ be the orthogonal projection from $\mathbb{R}^{d}$ onto $TS^{d-1}_{\frac{g\hat v}{\|g \hat v\|}}$, and let $\|\cdot \|$ denote the natural norm on the exterior spaces. Then,
\begin{equation}\label{equationofcocycleforform}
\begin{split}
d\sigma|_{g[v]}(\mathcal{D}g w_1,\dots, \mathcal{D}g w_{d-1}) &= \left|\det \left(\frac{g\hat v}{\|g\hat v\|}, \pi_{\perp}\left(\frac{gw_1}{\|g\hat v\|}\right),\dots, \pi_{\perp}\left(\frac{gw_{d-1}}{\|g\hat v\|}\right)\right)\right| \\
 &= \left\| \frac{g\hat v}{\|g\hat v\|}\wedge \frac{gw_1}{\|g\hat v\|}\wedge\dots \wedge \frac{gw_{d-1}}{\|g\hat v\|}\right\| \\
 &= \frac{\| g\hat v\wedge gw_1\wedge \cdots \wedge  g w_{d-1}\|}{\|g\hat v\|^{d}}\\
 &=\frac{\|\hat{v} \wedge w_1\wedge \dots\wedge w_{d-1}\|}{\|g\hat v\|^{d}} \\
  &=\frac{d\sigma|_{[v]}(w_1,\dots , w_{d-1})}{\|g\hat v\|^{d}} \\
 & = \big(\frac{\|v\|}{\|g v\|} \big)^{d} d\sigma|_{[v]}(w_1,\dots , w_{d-1}).
\end{split}
\end{equation}
Note that the final expression is independent of the choice of the representative $v$.

As a result, we obtain the equation for the pull-back Radon-Nikodym cocycle of $m_{vol}$:
\begin{equation}\label{equationofcocycleformeasure}
\frac{d g^* m_{vol}}{dm_{vol}}([v]) = \left(\frac{\|v\|}{\|gv\|}\right)^{d}.
\end{equation}
Or equivalently, for the push-forward, we have
\begin{equation}\label{equationofcocycleformeasure2}
\frac{d g_* m_{vol}}{dm_{vol}}([v]) = \left(\frac{\|v\|}{\|g^{-1}v\|}\right)^{d}.
\end{equation}

\subsubsection{Ellipses}\label{sectionofellipse}
Let $v_0, w_1, \dots, w_{d-1}$ be orthonormal vectors. Let $TS^{d-1}_{v_0}$ be the hyperplane perpendicular to $v_0$, tangent to the unit sphere at $v_0$. For any $d-1$ non-negative numbers $c_1, \dots, c_{d-1}$, let $\tilde{\mathcal{E}}(v_0; w_1, \dots, w_{d-1}; c_1, \dots, c_{d-1}) \subset TS^{d-1}_{v_0}$ be the ellipsoid centered at $v_0$ with axes in the directions of $w_1, \dots, w_{d-1}$ having corresponding lengths $2c_1, \dots, 2c_{d-1}$. Let $\mathcal{E}(v_0; w_1, \dots, w_{d-1}; c_1, \dots, c_{d-1}) \subset \mathbb{RP}^{d-1}$ be its projectivization.

For any $c \ge 0$ and unit vector $v_0$, choose $w_1', \dots, w_{d-1}'$ to be an orthonormal basis for the orthogonal complement of $v_0$. We define
\[
\mathcal{B}(v_0,c) = \mathcal{E}(v_0; w'_1, \dots, w'_{d-1}; c, \dots, c),
\]
which is independent of the choice of $w_1', \dots, w_{d-1}'$.

The above definitions of $\mathcal{E}(v_0; w_1, \dots, w_{d-1}; c_1, \dots, c_{d-1})$ and $\mathcal{B}(v_0,c)$ carry over to the case when $v_0,w_1,\dots,w_{d-1}$ are orthogonal projective points in $\mathbb{RP}^{d-1}$ by picking their representatives.

We present a useful lemma, which is straightforward to verify:
\begin{lemma}\label{lemmaofmapofellipse}
For any $c\ge 0$ and $\{1, d-1\}$-divergent element $g \in \mathsf{PGL}(d,\mathbb{R})$, we have
\[
g\mathcal{B}\big( (V_{d-1}(g))^{\perp}, c \big) = \mathcal{E}\left(U_1(g); u(g)[e_2], \dots, u(g)[e_d]; c\frac{\sigma_2(g)}{\sigma_1(g)}, \dots, c\frac{\sigma_d(g)}{\sigma_1(g)}\right),
\]
where $(V_{d-1}(g))^{\perp} = v(g)^{-1} [e_1]$ is the point in $\mathbb{RP}^{d-1}$ orthogonal to the hyperplane $V_{d-1}(g)$.
\end{lemma}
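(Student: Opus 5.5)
We reduce to the diagonal case via the singular value decomposition $g = u(g)a(g)v(g)$. The key observation is that the definitions of $\mathcal{E}$ and $\mathcal{B}$ are equivariant under orthogonal transformations. Indeed, if $k\in \mathsf{O}(d,\mathbb{R})$, then $k$ preserves the unit sphere and inner products. It therefore maps the tangent hyperplane $TS^{d-1}_{v_0}$ isometrically onto $TS^{d-1}_{kv_0}$, and sends the ellipsoid with axes $w_i$ and lengths $2c_i$ to the one with axes $kw_i$ and the same lengths. Hence
\[
k\,\mathcal{E}(v_0;w_1,\dots,w_{d-1};c_1,\dots,c_{d-1}) = \mathcal{E}(kv_0;kw_1,\dots,kw_{d-1};c_1,\dots,c_{d-1}).
\]
In particular, $k\mathcal{B}(v_0,c) = \mathcal{B}(kv_0,c)$.

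First we identify the base point. We have $V_{d-1}(g) = U_{d-1}(g^{-1})$. Since $g^{-1} = v(g)^{-1}a(g)^{-1}u(g)^{-1}$, and after reordering with the antidiagonal permutation, this hyperplane is $v(g)^{-1}$ applied to the span of $e_2,\dots,e_d$. Its orthogonal point is therefore $v(g)^{-1}[e_1]$, as claimed in the statement.

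Applying the equivariance with $k = v(g)$, we get
\[
v(g)\mathcal{B}(v(g)^{-1}[e_1],c) = \mathcal{B}([e_1],c) = \mathcal{E}([e_1];[e_2],\dots,[e_d];c,\dots,c),
\]
where we choose $e_2,\dots,e_d$ as the orthonormal complement.

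Next we apply $a = a(g) = \diag(\sigma_1,\dots,\sigma_d)$. A point of $\tilde{\mathcal{E}}$ is $e_1 + \sum_{i\ge 2} t_i e_i$ with $\sum_{i\ge2} t_i^2/c^2\le 1$. Its image under $a$ is $\sigma_1 e_1 + \sum_i \sigma_i t_i e_i$. This is projectively equal to $e_1 + \sum_i (\sigma_i/\sigma_1) t_i e_i$, which lies on $TS^{d-1}_{e_1}$. Setting $s_i = (\sigma_i/\sigma_1)t_i$, these points sweep out exactly the ellipsoid $\sum_i s_i^2/(c\sigma_i/\sigma_1)^2\le 1$. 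Projectivizing gives
\[
a\,\mathcal{B}([e_1],c)=\mathcal{E}\bigl([e_1];[e_2],\dots,[e_d];c\tfrac{\sigma_2}{\sigma_1},\dots,c\tfrac{\sigma_d}{\sigma_1}\bigr).
\]
The $\{1\}$-divergence is used only so that $U_1(g) = u(g)[e_1]$ is well defined.

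Finally, applying equivariance with $k = u(g)$ yields the right-hand side of the statement, since $u(g)[e_1] = U_1(g)$.

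The one point needing care is that the projectivization of the tangent-plane ellipsoid is compatible with the linear map $a$. This holds because $a$ fixes the line $[e_1]$ and preserves the affine chart $\{x_1 = \text{const}\}$ up to the scalar $\sigma_1$, so the central projection commutes with $a$. A second point is independence of choices when the singular value decomposition is not unique. For $c\sigma_i/\sigma_1$ with repeated singular values, the ellipsoid is rotationally symmetric within each eigenspace. The result is therefore independent of the choice of $u(g)$, given the $\{1,d-1\}$-divergence, which pins down $u(g)[e_1]$ and the base point. Neither step is a real obstacle; the argument is a direct computation.
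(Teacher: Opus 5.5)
Your proof is correct: the reduction via $g=u(g)a(g)v(g)$, the orthogonal equivariance of $\mathcal{E}$ and $\mathcal{B}$, the identification $(V_{d-1}(g))^{\perp}=v(g)^{-1}[e_1]$ by reordering with the antidiagonal permutation, and the explicit computation of $a(g)\mathcal{B}([e_1],c)$ in the chart $e_1+e_1^{\perp}$ are all sound. The paper gives no written proof and simply calls the lemma straightforward to verify; your direct computation is exactly the verification it leaves to the reader.
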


\section{Projective Anosov Subgroups}
\subsection{Definition and Representation of Projective Anosov Subgroups}

\begin{definition}\label{definitionofdivergentgroup}
Let $\theta \subset \Delta$ be a non-empty subset. A sequence $(\gamma_n)_{n\in \mathbb{Z}^{>0}} \subset \mathsf{PGL}(d,\mathbb{R})$ is called \emph{$\theta$-divergent} if for any $i\in \theta$, 
\[ 
\frac{\sigma_i(\gamma_n)}{\sigma_{i+1}(\gamma_n)} \to \infty \quad \text{as } n\to \infty. 
\] 
A subgroup $\Gamma \subset \mathsf{PGL}(d,\mathbb{R})$ is called \emph{$\theta$-divergent} if every escaping sequence in $\Gamma$ is $\theta$-divergent.
\end{definition}

Suppose $\Gamma \subset \mathsf{PGL}(d,\mathbb{R})$ is a $\theta$-divergent subgroup. The \emph{limit set} of $\Gamma$ is defined as
\[
\Lambda^{\theta}(\Gamma) = \{F \in \mathcal{F}_{\theta} \mid F \text{ is the accumulation point of } U_{\theta}(\gamma_n) \text{ for some divergent sequence } (\gamma_n) \text{ in } \Gamma\},
\]
which is a $\Gamma$-invariant and compact set.

Note that if $\Gamma$ is $\theta$-divergent, then it is also $\theta^{opp}$-divergent. Therefore, without loss of generality, we will always assume that $\theta$ is symmetric.

The notion of a transverse subgroup was introduced by Kapovich--Leeb--Porti~\cite{kapovich2017anosov} as follows:

\begin{definition}\label{definitionoftransverse}
A subgroup $\Gamma \subset \mathsf{PGL}(d,\mathbb{R})$ is called \emph{$\theta$-transverse} if it is $\theta$-divergent and any two distinct flags in $\Lambda^{\theta}(\Gamma)$ are transverse.
\end{definition}

Notice that if a $\theta$-transverse subgroup $\Gamma \subset \mathsf{PGL}(d,\mathbb{R})$ is non-elementary, then $\Lambda^{\theta}(\Gamma)$ is a perfect set.

There are numerous definitions of Anosov subgroups, for example the singular value gap definition as we mentioned in the Introduction. However, for convenience, in the main body of the paper we adopt the one by Kapovich--Leeb--Porti~\cite[Definition~1.5]{kapovich2014morse}.

\begin{definition}\label{definitionofanosov}
Let $\theta_1 = \{1, d-1\}$. A hyperbolic subgroup $\Gamma$ of $\mathsf{PGL}(d,\mathbb{R})$ is called \emph{projective Anosov} if $\Gamma$ is $\theta_1$-transverse and $\Lambda^{\theta_1}(\Gamma)$ is $\Gamma$-equivariantly homeomorphic to the Gromov boundary $\partial_\infty \Gamma$.
\end{definition}

Here is a useful criterion for Anosov subgroups:

\begin{theorem}[{Guichard--Wienhard~\cite[Theorem~5.3]{GW1}, Kapovich--Leeb--Porti~\cite[Theorem~1.5]{anosovmorseandbuilding}, and Bochi--Potrie--Sambarino~\cite[Theorem~8.4]{dominatedsplitting}}]\label{theoremofcriterionofsingularvaluegaps}

Let $\Gamma\subset \mathsf{PGL}(d,\mathbb{R})$ be a finitely generated subgroup, and let $d_{word}$ be the word metric on $\Gamma$ with respect to a chosen finite set of generators. Then $\Gamma$ is projective Anosov if and only if there exist constants $a>1$ and $A>0$ such that for any $\gamma\in \Gamma$,
\[
\frac{1}{a}d_{word}(\mathrm{id},\gamma)-A \le \log \frac{\sigma_1(\gamma)}{\sigma_{2}(\gamma)} \le a d_{word}(\mathrm{id},\gamma)+A.
\]
\end{theorem}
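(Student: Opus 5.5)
The upper bound holds for every finitely generated subgroup. Write $\gamma=s_1\cdots s_n$ with $n=d_{word}(\mathrm{id},\gamma)$ and each $s_j$ in the finite generating set $S$. Apply Lemma~\ref{lemmaofuniformcontinuity} to the compact set $K=S$, or simply use submultiplicativity of $\sigma_1$ and supermultiplicativity of $\sigma_d$. This gives $\log\frac{\sigma_1}{\sigma_2}(\gamma)\le \log\frac{\sigma_1}{\sigma_d}(\gamma)\le n\max_{s\in S}\log\frac{\sigma_1}{\sigma_d}(s)$. So the content of the statement is the equivalence between Definition~\ref{definitionofanosov} and the linear lower bound. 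I would prove the two implications separately, following the dynamical (dominated splitting) approach of Bochi--Potrie--Sambarino, with Kapovich--Leeb--Porti's Morse lemma as the main input for the converse.

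\textbf{Definition~\ref{definitionofanosov} $\Rightarrow$ lower bound.} Let $\xi:\partial_\infty\Gamma\to\Lambda^{\theta_1}(\Gamma)$ be the equivariant homeomorphism. Transversality and continuity give a uniform angle lower bound between $\xi^{(1)}(x)$ and $\xi^{(d-1)}(y)$ whenever $x,y$ are uniformly separated in $\partial_\infty\Gamma$.

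Consider a geodesic segment $\mathrm{id}=\gamma_0,\gamma_1,\dots,\gamma_n=\gamma$ in the Cayley graph. I first claim that there is an $N$ such that $\frac{\sigma_1}{\sigma_2}(\gamma_{k+N}^{-1}\gamma_{k'})$-type products gain a definite factor $2$ every $N$ steps. To see this, argue by contradiction. If it fails, there are geodesic segments of length $N\to\infty$ with bounded gap. After translating and passing to limits, these produce bi-infinite geodesics whose endpoints $x^\pm$ satisfy the following: $\xi^{(1)}(x^+)$ is not attracted, i.e.\ the Cartan attractors $U_1$ of the segments stay away from $\xi^{(1)}(x^+)$. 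This contradicts the fact that $U_{\theta_1}(\gamma_n)$ accumulates only on the limit set, combined with the identification $\xi(x^+)=\lim U_{\theta_1}(\gamma_n)$ along rays to $x^+$. Proving that identification uses divergence and transversality.

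Once the claim holds, I chain the blocks together. The angle control between consecutive attractors and repellers makes the singular value gap almost multiplicative along geodesics, up to a uniform constant. Concretely, $\log\frac{\sigma_1}{\sigma_2}(gh)\ge\log\frac{\sigma_1}{\sigma_2}(g)+\log\frac{\sigma_1}{\sigma_2}(h)-C$ whenever $V_{d-1}(g)$ and $U_1(h)$ are uniformly transverse. This yields the linear lower bound.

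\textbf{Lower bound $\Rightarrow$ Definition~\ref{definitionofanosov}.} The estimate says the orbit map $\gamma\mapsto\gamma\cdot o$ into the symmetric space is a quasi-isometric embedding that is uniformly $\alpha_1$-regular. By the Kapovich--Leeb--Porti Morse lemma, the images of geodesics fellow-travel $\theta_1$-diamonds. From this I deduce that $\Gamma$ is hyperbolic, since the Morse property forces thin triangles in the Cayley graph. This hyperbolicity step is the main obstacle, and I would simply invoke their theorem for it.

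Given hyperbolicity, I construct the boundary map as follows. Along a geodesic ray $(\gamma_n)$, I bound $d_\angle(U_1(\gamma_n),U_1(\gamma_{n+1}))\le C\frac{\sigma_2}{\sigma_1}(\gamma_n)\le C'e^{-n/a}$. These terms are summable, so $U_{\theta_1}(\gamma_n)$ converges exponentially fast. The limit depends only on the endpoint of the ray, because fellow-travelling rays differ by bounded elements, and Lemma~\ref{lemmaofuniformcontinuity} applies. This gives a continuous equivariant map $\partial_\infty\Gamma\to\mathcal F_{\theta_1}$ whose image is $\Lambda^{\theta_1}(\Gamma)$.

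Finally I check transversality of distinct points and injectivity. For $x\ne y$, take a bi-infinite geodesic from $x$ to $y$ through $\mathrm{id}$. Applying the almost-multiplicativity above to $\gamma_n^{-1}\gamma_{-m}$, whose gap grows linearly, forces $\xi^{(1)}(x)\not\subset\xi^{(d-1)}(y)$. Injectivity follows from this transversality, and a continuous injective map from a compact space is a homeomorphism onto its image.
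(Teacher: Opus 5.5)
The paper gives no proof of this statement; it is quoted from the three cited works. Your sketch therefore has to stand on its own, and it has a genuine gap at the transversality step of the direction ``linear lower bound $\Rightarrow$ projective Anosov''. The inequality you want to apply is
\[
\log\frac{\sigma_1}{\sigma_2}(gh)\ \ge\ \log\frac{\sigma_1}{\sigma_2}(g)+\log\frac{\sigma_1}{\sigma_2}(h)+2\log\sin\angle\big(U_1(h),V_{d-1}(g)\big).
\]
Here the angle sits on the lower-bound side. The inequality turns transversality into growth, but it says nothing when the angle tends to $0$, so it cannot be run backwards. (Also, with $g=\gamma_n^{-1}$ and $h=\gamma_{-m}$, the angle it tests is between $\xi^1(y)$ and $\xi^{d-1}(x)$.)

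Nor can any estimate that only compares the gap of $\gamma_n^{-1}\gamma_{-m}$ with the gaps of its two factors do the job. The hypothesis gives the product a gap of at least $e^{(n+m)/a-A}$, while the factors may have gaps up to $e^{an+A}$ and $e^{am+A}$. Already in $\mathsf{SL}(2,\mathbb{R})$, take $g=\mathrm{diag}(e^{n},e^{-n})$ and an $h$ with gap $e^{2n}$ whose attracting line makes angle $e^{-n}$ with $V_1(g)$. The product $gh$ has gap $\asymp e^{2n}$, so linear growth is compatible with an exponentially small angle.

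Transversality of the two endpoint flags of a bi-infinite geodesic is exactly the non-trivial content here. In the cited works it comes from one of two sources:
\begin{itemize}
\item the bi-infinite Kapovich--Leeb--Porti Morse lemma: a uniformly regular quasi-geodesic line stays near the parallel set of an antipodal pair of flags, and its two ends converge to that pair;
\item Bochi--Gourmelon's theorem: a uniform gap for all sub-products yields a dominated splitting with uniformly transverse bundles.
\end{itemize}
Both arguments use the gaps of all intermediate products $\gamma_{-m}^{-1}\gamma_k$, not just one. Since you already invoke the Morse lemma for hyperbolicity, the natural repair is to take antipodality of the endpoints from it as well.

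In the forward direction your ingredients are right, but the ``claim'' is misplaced. That elements of word length $N$ have arbitrarily large $\frac{\sigma_1}{\sigma_2}$ follows immediately from $\theta_1$-divergence. The contradiction argument you describe is not meaningful, because when gaps stay bounded the Cartan attractors carry no information.

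What the chaining actually needs is a lower bound on
\[
\angle\big(U_1(\gamma_{kN}^{-1}\gamma_{(k+1)N}),\,V_{d-1}(\gamma_{kN})\big),
\]
uniform in $k$ and in the geodesic. Note that this involves the repeller of the accumulated product, not of the previous block. The bound follows from a uniform Cartan property: if $g$ is long and lies within bounded distance of a ray from $\mathrm{id}$ to $z$, then $U_{\theta_1}(g)$ is close to $\xi(z)$. You get this from the sequential statement by compactness. Apply it to the two halves $\gamma_{kN}^{-1}$ and $\gamma_{kN}^{-1}\gamma_{(k+1)N}$ of a geodesic through $\mathrm{id}$; their endpoints are uniformly separated in $\partial_\infty\Gamma$. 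Once $N$ is large, the block gain then beats the fixed angle loss.

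Finally, in the converse direction, independence of the limit from the ray does not come from Lemma~\ref{lemmaofuniformcontinuity}, which only controls $\kappa$. It comes from the same perturbation estimate you used for consecutive steps, $d_\angle(U_1(g),U_1(gh))\le C_h\frac{\sigma_2}{\sigma_1}(g)$, with $C_h$ depending only on a bound for the word length of $h$.
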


A typical example of a projective Anosov subgroup is the holonomy representation of a closed hyperbolic manifold, which is a cocompact lattice preserving a convex round cone. It turns out that a large class of Anosov subgroup can be realized as a discrete subgroup acting cocompactly on some convex set (though the regularity of the set's boundary is generally much weaker than that of a round cone).

Let $V$ be a vector space. An open convex domain (for brevity, we will simply refer to it as a \emph{convex domain} for the remainder of this paper) $\Omega\subset \mathbb{P}(V)$ is called \emph{proper} if it is bounded in some affine chart. A point $x\in \partial \Omega$ is called \emph{$C^1$} if there exists a unique supporting hyperplane to $\Omega$ at $x$ (and we use $H_x$ to denote it), and is called \emph{extreme} if there does not exist any non-trivial open segment in $\partial \Omega$ containing $x$. 

\begin{definition}\label{definitionofregularconvexcocompact}
Let $V$ be a real vector space and $\Omega$ be a proper convex domain in $\mathbb{P}(V)$. A discrete subgroup $\Gamma_0\subset \operatorname{Aut}(\Omega)$ is called \emph{regular convex cocompact} if there exists a non-empty closed convex subset $\mathcal{C}\subset \Omega$ such that:
\begin{enumerate}
    \item $\gamma \mathcal{C} = \mathcal{C}$ for all $\gamma\in \Gamma_0$;
    \item $\mathcal{C}/\Gamma_0$ is compact;
    \item Every point in $\overline{\mathcal{C}}\cap \partial \Omega$ is a $C^1$ extreme point.  
\end{enumerate}
\end{definition}

With the preceding notation, we have the following result:
\begin{theorem}[{Zimmer~\cite[Theorem~1.22]{zimmerprojectiveconvex} and Canary--Zhang--Zimmer~\cite[Proposition~3.5]{CaZhZi}}]\label{theoremofregularcocompact}
If $\Gamma_0\subset \mathsf{PGL}(V)$ is an irreducible regular convex cocompact subgroup, then $\Gamma_0$ is projective Anosov. Moreover, the limit set of $\Gamma_0$ is precisely $\Lambda^{1,\dim V-1}(\Gamma_0) = \{(x,H_x)\mid x\in \overline{\mathcal{C}}\cap \partial \Omega\}$.
\end{theorem}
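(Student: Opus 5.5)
The plan is to verify Definition~\ref{definitionofanosov} directly. Concretely, we show that $\Gamma_0$ is hyperbolic, that it is $\{1,\dim V-1\}$-divergent, and that $x\mapsto (x,H_x)$ is a $\Gamma_0$-equivariant homeomorphism from $\partial_\infty\Gamma_0$ onto the limit set, with transverse image. Set $d=\dim V$ and let $d_\Omega$ be the Hilbert metric.

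\textbf{Step 1: hyperbolicity.} Since $\mathcal{C}/\Gamma_0$ is compact and the action is properly discontinuous, the \v{S}varc--Milnor lemma shows that $\Gamma_0$ is finitely generated and quasi-isometric to $(\mathcal{C},d_\Omega)$. I would then show that $(\mathcal{C},d_\Omega)$ is Gromov hyperbolic, arguing by contradiction in the manner of Benoist. If triangles in $\mathcal{C}$ are not uniformly thin, translate them back into a compact fundamental domain using $\Gamma_0$ and take a limit. The limit is a properly embedded flat triangle (or half-triangle) in $\overline{\mathcal{C}}$ with boundary segments in $\overline{\mathcal{C}}\cap\partial\Omega$. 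Any such segment contradicts the extremality in condition (3) of Definition~\ref{definitionofregularconvexcocompact}; the $C^1$ condition rules out the degenerate "corner" limits. The same compactness argument identifies the Gromov boundary $\partial_\infty\Gamma_0$ with $\overline{\mathcal{C}}\cap\partial\Omega$ via $\gamma\mapsto \lim\gamma p$ for a basepoint $p\in\mathcal{C}$.

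\textbf{Step 2: divergence.} Let $(\gamma_n)$ escape, with $\gamma_n p\to x$ and $\gamma_n^{-1}p\to y$. Normalize $\gamma_n/\sigma_1(\gamma_n)$ and pass to a limit $T\neq 0$, which is a projective map defined off $\mathbb{P}(\ker T)$. Then $T$ sends the open set $\Omega\setminus\mathbb{P}(\ker T)$ into $\overline{\Omega}$. Suppose first that $\sigma_1/\sigma_2(\gamma_n)$ stays bounded along a subsequence. Then $\operatorname{rank}T\ge 2$, and irreducibility guarantees that the image of $T$ on a neighbourhood of $p$ inside $\mathcal{C}$ meets $\overline{\mathcal{C}}$ in a nondegenerate segment of $\partial\Omega$. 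This contradicts extremality. So $\sigma_1/\sigma_2\to\infty$, and the same picture gives $U_1(\gamma_n)\to x$. Next, apply the identical argument to the dual action on the dual domain $\Omega^*$ with dual convex hull. The $C^1$ hypothesis on $\Omega$ becomes extremality for $\Omega^*$ at the points $H_x$. This yields $\sigma_{d-1}/\sigma_d\to\infty$ and $U_{d-1}(\gamma_n)\to H_x$.

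\textbf{Step 3: limit set, transversality, and the boundary map.} By Step 2, the limit set is exactly $\{(x,H_x)\mid x\in\overline{\mathcal{C}}\cap\partial\Omega\}$.
\begin{itemize}
\item \emph{Transversality.} If $x\neq y$ are in $\overline{\mathcal{C}}\cap\partial\Omega$, extremality puts the open segment $(x,y)$ inside $\Omega$. Since $H_y$ is a supporting hyperplane at $y$, this forces $x\notin H_y$.
\item \emph{Continuity.} The map $x\mapsto H_x$ is continuous on $C^1$ points: a limit of supporting hyperplanes is supporting, and supporting hyperplanes at these points are unique.
\item \emph{Injectivity.} The map is injective because its first coordinate is $x$.
\end{itemize}
Composing with the identification from Step 1 gives the required equivariant homeomorphism $\partial_\infty\Gamma_0\to\Lambda^{1,d-1}(\Gamma_0)$.

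The main obstacle is Step 2, namely turning the rank-$\ge2$ limit map $T$ into an actual segment inside $\overline{\mathcal{C}}\cap\partial\Omega$. The difficulty is that the image $T(\mathcal{C})$ could a priori collapse onto $\partial\Omega$ outside $\overline{\mathcal{C}}$, or lie in a degenerate position. I would first try to control this with cocompactness: choose points of $\mathcal{C}$ within bounded distance of $\gamma_n p$ in several directions, so that their images spread along $\operatorname{Im}T$. If that fails, irreducibility is the tool to fall back on, since it ensures the limit set spans $V$ and hence is not contained in $\mathbb{P}(\ker T)$.
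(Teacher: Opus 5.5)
The paper does not prove this statement; it only cites Zimmer and Canary--Zhang--Zimmer. Your architecture is the standard one: hyperbolicity from the absence of segments in $\overline{\mathcal{C}}\cap\partial\Omega$, divergence from limits of normalized matrices, transversality from extremality, and the boundary map $x\mapsto(x,H_x)$. Steps 1 and 3 and the first half of Step 2 are fine. The obstacle you flag in Step 2 is not real. Since $\gamma_n\mathcal{C}=\mathcal{C}$, $T$ maps $\mathcal{C}\setminus\mathbb{P}(\ker T)$ into $\overline{\mathcal{C}}\cap\partial\Omega$ automatically. Irreducibility makes $\mathcal{C}$ span $V$, so $\mathcal{C}$ has nonempty interior. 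When $\operatorname{rank}T\ge 2$, the image of that interior therefore contains an open piece of $\mathbb{P}(\operatorname{Im}T)$, hence a segment. The genuine gap is the dual half of Step 2.

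\textbf{The dual step fails as stated.} Rerunning the argument on $\Omega^*$ needs an invariant convex set $\mathcal{C}^*\subset\Omega^*$ \emph{all} of whose ideal points are extreme. The $C^1$ hypothesis gives extremality only at the points $H_x$, and the hull of $\{H_x\}$ can contain other boundary points.

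For example, let $\Gamma_0\subset\mathsf{SO}(2,1)$ be a Schottky group with limit set $\Lambda\subset\partial\mathbb{D}$. Set $\Omega_1=\{[v]\mid f_x(v)>0 \text{ for all } x\in\Lambda\}$, where $f_x$ defines the tangent line $\ell_x$ to $\partial\mathbb{D}$ at $x$. Each $x\in\Lambda$ is $C^1$ in $\Omega_1$ because $\mathbb{D}\subset\Omega_1$, so $H_x=\ell_x=[f_x]$. Each $x\in\Lambda$ is also extreme, because $\Lambda$ accumulates at $x$. Hence $\Gamma_0$ is regular convex cocompact on $\Omega_1$, with $\mathcal{C}$ the hyperbolic hull of $\Lambda$. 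But $\Omega_1^*$ is the interior of the hull of $\{H_x\}$. For every gap $(a,b)$ of $\Lambda$ of length $<\pi$, the segment $[H_a,H_b]$ (the lines through $\ell_a\cap\ell_b\in\partial\Omega_1$) lies in $\partial\Omega_1^*$. So a rank-$\ge 2$ dual limit yields no contradiction. Moreover, even where the dual argument runs, it does not identify the limit of $U_{d-1}(\gamma_n)$ as $H_x$.

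\textbf{The dual domain is unnecessary.} First, $\frac{\sigma_{d-1}}{\sigma_d}(\gamma_n)=\frac{\sigma_1}{\sigma_2}(\gamma_n^{-1})\to\infty$, by the first half applied to $(\gamma_n^{-1})$. Now let $K$ be a limit of $U_{d-1}(\gamma_n)$.
\begin{itemize}
\item The inclusion $U_1(\gamma_n)\subset U_{d-1}(\gamma_n)$ gives $x\in K$.
\item Any hyperplane $H$ missing $\overline{\Omega}$ avoids $y=\lim U_1(\gamma_n^{-1})$, so $\gamma_nH\to K$.
\item Since $\gamma_nH\cap\Omega=\emptyset$ for all $n$, also $K\cap\Omega=\emptyset$.
\end{itemize}
Thus $K$ supports $\Omega$ at $x$, and the $C^1$ hypothesis gives $K=H_x$. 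This incidence argument is where $C^1$ actually enters. Step 1 needs only extremality: if both far sides of a limit triangle degenerated, all three vertices would coincide in $\partial\Omega$, while the remaining side passes through a point of $\Omega$.
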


Furthermore, every irreducible torsion-free projective Anosov subgroup can be realized as an irreducible regular convex cocompact subgroup. We present a short proof of this fact, as the statements we cite differ slightly from our formulation.
\begin{theorem}[{Zimmer~\cite[Theorem~1.10]{zimmerprojectiveconvex} and Canary--Zhang--Zimmer~\cite[Theorem~4.2]{CaZhZi}}]\label{theoremofrealization}
Let $\Gamma\subset \mathsf{PGL}(d,\mathbb{R})$ be an irreducible torsion-free projective Anosov subgroup. There exist a vector space $V$, a proper convex domain $\Omega\subset \mathbb{P}(V)$, and an irreducible regular convex cocompact subgroup $\Gamma_0\subset \operatorname{Aut}(\Omega)$ such that:
\begin{enumerate}
    \item There exists a group isomorphism $\rho:\Gamma_0\to \Gamma$.
    \item There exists an equivariant homeomorphism from $\Lambda^{1,\dim V-1}(\Gamma_0)$ to $\Lambda^{1,d-1}(\Gamma)$, denoted by $\xi$, such that it induces bi-Lipschitz equivariant homeomorphism from $\Lambda^{1}(\Gamma_0)$ to $\Lambda^1(\Gamma)$ (respectivly, from $\Lambda^{\dim V - 1}(\Gamma_0)$ to $\Lambda^{d-1}(\Gamma)$), denoted by $\xi^1$ (respectively, by $\xi^{d-1}$).
\end{enumerate}
\end{theorem}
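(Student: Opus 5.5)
The plan is to obtain the statement from the cited work of Zimmer and Canary--Zhang--Zimmer via a lift to $\mathsf{SL}^{\pm}$ and a symmetric-power construction, and then to upgrade the resulting equivariant boundary map to a bi-Lipschitz one. First, lift $\Gamma$ to a subgroup of $\mathsf{GL}(d,\mathbb{R})$ (using the normalization $|\det|=1$). If necessary, pass to the image under the representation $\mathrm{Sym}^2$ (or the natural map $g\mapsto g\otimes g$ acting on $V=\mathrm{Sym}^2(\mathbb{R}^d)$). There the Veronese embedding $[v]\mapsto[v\otimes v]$ sends $\mathbb{RP}^{d-1}$ into the boundary of the proper convex cone of positive semidefinite forms. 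Since $\sigma_1/\sigma_2$ of $\mathrm{Sym}^2 g$ equals $\sigma_1/\sigma_2(g)$, projective Anosovness is preserved. Irreducibility of the image is not automatic, so instead I restrict to the span $V$ of the Veronese image of $\Lambda^1(\Gamma)$; since $\Gamma$ is irreducible, $\Gamma$ acts irreducibly on this span.

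Next, following Zimmer~\cite[Theorem~1.10]{zimmerprojectiveconvex}, I take $\Omega$ to be the interior of the dual of the convex hull of the Veronese image of $\Lambda^{d-1}(\Gamma)$ in $\mathbb{P}(V)$, and $\mathcal{C}$ to be the convex hull of the Veronese image of $\Lambda^1(\Gamma)$. Then I invoke \cite[Theorem~4.2]{CaZhZi}, which states exactly that, for an irreducible projective Anosov group, $\Gamma_0=\mathrm{Sym}^2\Gamma|_V$ is regular convex cocompact in $\operatorname{Aut}(\Omega)$. Torsion-freeness guarantees that $\rho:\Gamma_0\to\Gamma$, the inverse of $g\mapsto \mathrm{Sym}^2 g|_V$, is a well-defined isomorphism; the kernel of $\mathrm{Sym}^2$ on $\mathsf{PGL}$ is trivial anyway. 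Theorem~\ref{theoremofregularcocompact} then gives that $\Gamma_0$ is projective Anosov with limit set $\{(x,H_x)\}$. The equivariant homeomorphism $\xi$ is obtained by composing the boundary maps of both groups to $\partial_\infty\Gamma$ (Definition~\ref{definitionofanosov}); concretely, $\xi^1$ is the inverse of the Veronese map restricted to $\Lambda^1(\Gamma)$, and $\xi^{d-1}$ is dual.

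The bi-Lipschitz claim is where care is needed. The Veronese map $\nu:[v]\mapsto[v\otimes v]$ is a smooth embedding of the compact manifold $\mathbb{RP}^{d-1}$ into $\mathbb{P}(\mathrm{Sym}^2\mathbb{R}^d)$; an injective immersion of a compact manifold is bi-Lipschitz onto its image with respect to the angular metrics. Hence its restriction to $\Lambda^1(\Gamma)$, and then its inverse $\xi^1$, are bi-Lipschitz. The projectivized subspace $\mathbb{P}(V)$ carries the restricted angular metric, which agrees with the intrinsic one, so nothing is lost by passing to $V$. The dual statement uses the analogous Veronese map on $(\mathbb{RP}^{d-1})^*$ together with the identification of $H_x$ with the image of the corresponding point of $\Lambda^{d-1}(\Gamma)$. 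I expect this identification to be the main obstacle: one must check that the supporting hyperplane of $\Omega$ at $\nu(\xi(\zeta))$ is exactly the trace on $V$ of $\phi\otimes\phi$, where $[\phi]$ is the $(d-1)$-limit point at $\zeta$. I would verify this by noting that $\phi\otimes\phi$ vanishes at $\nu(x)$ and is nonnegative on $\mathcal{C}$, so it is a supporting hyperplane there. The $C^1$ property from Theorem~\ref{theoremofregularcocompact} then forces uniqueness.
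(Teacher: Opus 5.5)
Your outline follows the same route as the paper: a $\mathrm{Sym}^2$-type algebraic embedding, restriction to the span $V$ of the limit set, torsion-freeness for faithfulness, and bi-Lipschitz control from smoothness of the embedding. Your Veronese argument for $\xi^1$ is correct and more explicit than the paper's.

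The gap is the step where you say that \cite[Theorem~4.2]{CaZhZi} ``states exactly'' that $\mathrm{Sym}^2\Gamma|_V$ is regular convex cocompact for your explicit $\Omega$. As the paper uses it, that theorem only produces, in a larger space $\tilde V$, a discrete subgroup $\tilde\Gamma_0\subset\operatorname{Aut}(\tilde\Omega)$ with $\frac{\sigma_1}{\sigma_2}$ preserved and an equivariant boundary homeomorphism. It gives neither irreducibility, nor a convex set on which the action is cocompact, nor that limit points are $C^1$ \emph{extreme} points of $\partial\Omega$. For your $\Omega$ (the dual of the hull of the functionals $(\phi\otimes\phi)|_V$, $[\phi]\in\Lambda^{d-1}(\Gamma)$), uniqueness of the supporting hyperplane at $\nu(x)$ does follow from transversality, but extremeness does not. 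View elements of $\mathrm{Sym}^2\mathbb{R}^d$ as bilinear forms on $(\mathbb{R}^d)^*$, and let $[\phi_x]$ be the limit hyperplane through $x$. If some $S\in V$, not proportional to $x\otimes x$, has $S(\phi_x,\cdot)=0$ and $|S(\phi,\phi)|\le c\,\phi(x)^2$ on $\Lambda^{d-1}(\Gamma)$, then $x\otimes x+tS$ lies in $\partial\Omega$ for all small $|t|$. Nothing you say excludes this. For example, it occurs with $S=x\otimes y+y\otimes x$, $y\in\ker\phi_x\setminus\mathbb{R}x$, whenever $V=\mathrm{Sym}^2\mathbb{R}^d$ and $|\phi(x)|\gtrsim d_{\angle}([\phi],[\phi_x])$ on $\Lambda^{d-1}(\Gamma)$ for unit representatives. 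Cocompactness on the hull would also need a Danciger--Gu\'eritaud--Kassel type argument. Finally, you deduce that $\Gamma_0$ is projective Anosov from Theorem~\ref{theoremofregularcocompact}, so that step inherits the gap; the identity $\frac{\sigma_1}{\sigma_2}(\mathrm{Sym}^2 g)=\frac{\sigma_1}{\sigma_2}(g)$ does not survive restriction to $V$.

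The missing ingredient is the one the paper uses: Zimmer's \cite[Theorem~1.27]{zimmerprojectiveconvex}. It upgrades an irreducible projective Anosov subgroup preserving \emph{some} proper convex domain to a regular convex cocompact one, in a possibly different domain. So do not commit to your $\Omega$. Instead:
\begin{itemize}
\item Prove directly that $\Gamma_0=\mathrm{Sym}^2\Gamma|_V$ is projective Anosov. The paper gets $P_1$-divergence from \cite[Lemma~9.2]{canary2020topological} and applies Definition~\ref{definitionofanosov}. Alternatively, $\sigma_2(\mathrm{Sym}^2\gamma|_V)\le\sigma_1(\gamma)\sigma_2(\gamma)$ by interlacing, while $\sigma_1(\mathrm{Sym}^2\gamma|_V)\gtrsim\sigma_1(\gamma)^2$ because $\Lambda^1(\Gamma)$ lies in no hyperplane.
\item Observe that $\Gamma_0$ preserves the proper convex domain $\mathbb{P}(V)$ intersected with the projectivized cone of positive definite tensors. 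This is nonempty because $\Lambda^1(\Gamma)$ spans $\mathbb{R}^d$.
\item Apply Zimmer's theorem. Since $\Lambda^{1,\dim V-1}(\Gamma_0)$ is intrinsic to $\Gamma_0$, your identification $H_{\nu(x)}=[(\phi_x\otimes\phi_x)|_V]$ remains valid for whatever $\Omega$ this yields.
\end{itemize}

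Two smaller points. First, irreducibility of $\Gamma$ on $V$ needs an argument; the paper also only asserts it. One way is to pass to the identity component of the Zariski closure and identify $V$ with the irreducible submodule of $\mathrm{Sym}^2\mathbb{R}^d$ generated by the square of a highest weight vector. Second, for $\xi^{d-1}$ the analogue of ``nothing is lost by passing to $V$'' fails as stated. On the dual side, $\phi\otimes\phi\mapsto(\phi\otimes\phi)|_V$ is a linear quotient, not an isometric inclusion. You need $[\phi]\mapsto[(\phi\otimes\phi)|_V]$ to be an immersion along $\Lambda^{d-1}(\Gamma)$. This holds: a tangent vector $\delta$ in the kernel of its differential satisfies $\delta(y)=\lambda\phi(y)$ for all $[y]\in\Lambda^1(\Gamma)$ other than the point paired with $[\phi]$. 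These $y$ span $\mathbb{R}^d$, so $\delta\in\mathbb{R}\phi$.
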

\begin{proof}
By \cite[Theorem~4.2]{CaZhZi}, there exist a vector space $\tilde{V}$, a proper convex domain $\tilde{\Omega}\subset \mathbb{P}(\tilde{V})$, a discrete subgroup $\tilde{\Gamma}_0\subset \operatorname{Aut}(\tilde{\Omega})$, and an isomorphism $\tilde{\rho}:\tilde{\Gamma}_0\to \Gamma$ such that for any $\gamma\in \tilde{\Gamma}_0$, $\frac{\sigma_1(\tilde{\rho}(\gamma))}{\sigma_2(\tilde{\rho}(\gamma))} = \frac{\sigma_1(\gamma)}{\sigma_2(\gamma)}$ (here we abuse the notation $\sigma_i$ to denote singular values on different vector spaces). By Theorem~\ref{theoremofcriterionofsingularvaluegaps}, $\tilde{\Gamma}_0$ is projective Anosov in $\mathsf{PGL}(\tilde{V})$. Moreover, by \cite[Theorem~4.2]{CaZhZi}, there is an equivariant homeomorphism
$$
\tilde{\xi} : \Lambda^{1,\dim \tilde{V}-1}(\tilde{\Gamma}_0) \to \Lambda^{1,d-1}(\Gamma),
$$
which induces bi-Lipschitz maps $\tilde{\xi}^1 : \Lambda^{1}(\tilde{\Gamma}_0) \to \Lambda^{1}(\Gamma)$ and $\tilde{\xi}^{d-1} : \Lambda^{\dim \tilde{V} - 1}(\tilde{\Gamma}_0) \to \Lambda^{d-1}(\Gamma)$. This bi-Lipschitz property holds because its construction in Proposition~4.3 therein arises from an algebraic embedding.

Let $V\subset \tilde{V}$ be the subspace spanned by $\Lambda^{1}(\tilde{\Gamma}_0)$. Let $\Omega' = \tilde{\Omega}\cap \mathbb{P}(V)$, and let $\Gamma_0$ be the restriction of $\tilde{\Gamma}_0$ to $\mathsf{PGL}(V)$ which acts irreducibly. As $\Gamma$ is assumed to be torsion-free, $\tilde{\Gamma}_0$ is also torsion-free, so the restricted action is faithful, and $\tilde{\rho}$ induces an isomorphism $\rho$ from $\Gamma_0$ to $\Gamma$. 

Since $\Gamma_0$ preserves $\Lambda^1(\tilde{\Gamma}_0)$, by \cite[Lemma~9.2]{canary2020topological}, $\Gamma_0$ is $P_1$-divergent in $\mathsf{PGL}(V)$. Note that from Theorem~\ref{theoremofregularcocompact} we can define the map
$$
\tilde{\pi}:\Lambda^{1,\dim \tilde{V}-1}(\tilde{\Gamma}_0)\to \mathcal{F}_{1,\dim V - 1}(V), \quad (x,H_x)\mapsto (x, H_x\cap V),
$$
which is injective and preserves the required bi-Lipschitz properties. Let $\Lambda^{1,\dim V-1}(\Gamma_0) = \tilde{\pi}(\Lambda^{1,\dim \tilde{V}-1}(\tilde{\Gamma}_0))$. It is straightforward to verify that this is the transverse limit set of $\Gamma_0$, and there is a continuous equivariant map $\xi$ from it to $\Lambda^{1,d-1}(\Gamma)$, which is equivariantly homeomorphic to $\partial_{\infty}(\Gamma)$.  Thus, by Definition~\ref{definitionofanosov}, $\Gamma_0$ is projective Anosov.

To conclude, note that $\Gamma_0$ is an irreducible projective Anosov subgroup preserving the proper convex domain $\Omega'$. By \cite[Theorem~1.27]{zimmerprojectiveconvex}, there further exists a proper convex domain $\Omega$ such that $(V,\Omega,\Gamma_0,\rho)$ satisfies our requirements.
\end{proof}

\begin{definition}\label{definitionofanosovrepresentation}
Let $\Gamma_0$ be a regular convex cocompact subgroup with respect to $(V,\Omega,\mathcal{C})$. A faithful representation $\rho:\Gamma_0\to \mathsf{PGL}(d,\mathbb{R})$ is called a \emph{projective Anosov representation} if $\rho(\Gamma_0)$ is projective Anosov and there exists an equivariant homeomorphism $\xi:\Lambda^{1,\dim V-1}(\Gamma_0)\to \Lambda^{1,d-1}(\rho(\Gamma_0))$. The map $\xi$ is called the \emph{limit map} of $\rho$, whose induced maps on $\Lambda^1(\Gamma_0)$ (respectively, $\Lambda^{\dim V-1}(\Gamma_0)$) are denoted by $\xi^1$ (respectively, $\xi^{d-1}$).
\end{definition}

To conclude this section, we present some dynamical properties for later use.

\begin{lemma}[{Kapovich--Leeb--Porti~\cite{kapovich2017anosov} and Canary--Zhang--Zimmer~\cite[Proposition 2.3, Proposition 2.6 and Observation 6.1]{CaZhZi3}}]\label{lemmaofcartanproperty}
    Let $\Gamma_0$ be a regular convex cocompact group with respect to $(V,\Omega,\mathcal{C})$, and let $\rho:\Gamma_0\to \mathsf{PGL}(d,\mathbb{R})$ be a projective Anosov representation with limit map $\xi$.

   Fix $b_0 \in \mathcal{C} $. If $(\gamma_n)_{n \in \mathbb{N}}$ is a sequence in $\Gamma_0$ such that $\gamma_n b_0 \to x \in \Lambda^1(\Gamma_0)$ and $\gamma_n^{-1} b_0 \to y \in \Lambda^1(\Gamma_0)$ as $n \to \infty$, then we have:
\begin{enumerate}
    \item \textbf{(Cartan property)} $\big(U_1(\rho(\gamma_n)), U_{d-1}(\rho(\gamma_n)) \big) \to \xi(x,H_x)$.
    \item \textbf{(Strongly dynamics preserving property)} For any compact subset $\mathcal{K} \subset \mathcal{F}_{1,d-1}$ transverse to $\xi(y,H_y)$, the sequence $\rho(\gamma_n)\mathcal{K}$ converges to $\xi(x,H_x)$ uniformly.
\end{enumerate}
    
\end{lemma}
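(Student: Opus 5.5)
The plan is to prove the two properties in order: first a linear-algebra fact that, once the Cartan attractors and repellers of $\rho(\gamma_n)$ are known to converge, gives uniform attraction; then the Cartan property by a subsequence argument using equivariance of $\xi$; then property (2) by combining the two.

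\textbf{Divergence.} Since $\gamma_n b_0\to x\in\partial\Omega$, the sequence $(\gamma_n)$ leaves every finite subset of $\Gamma_0$, so $d_{word}(\mathrm{id},\gamma_n)\to\infty$. Because $\rho(\Gamma_0)$ is projective Anosov, Theorem~\ref{theoremofcriterionofsingularvaluegaps} gives $\frac{\sigma_1}{\sigma_2}(\rho(\gamma_n))\to\infty$. The same holds for $\frac{\sigma_{d-1}}{\sigma_d}(\rho(\gamma_n))$, since $\frac{\sigma_1}{\sigma_2}(g^{-1})=\frac{\sigma_{d-1}}{\sigma_d}(g)$. Hence $U_1(\rho(\gamma_n))$, $U_{d-1}(\rho(\gamma_n))$ and $V_{d-1}(\rho(\gamma_n))$ are all defined for $n$ large.

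\textbf{Linear-algebra fact.} Suppose $g_n$ is $\{1,d-1\}$-divergent with $U_1(g_n)\to p$ and $V_{d-1}(g_n)\to B$. Then $g_n$ sends every compact $K\subset\mathbb{RP}^{d-1}\setminus B$ uniformly to $p$. To see this, write $g_n=u_na_nv_n$. For $[w]\in K$ the vector $v_nw$ has $e_1$-coordinate bounded below by $c(K)\|w\|$ for large $n$. So the $u_n[e_1]$-component of $g_nw$ has size at least $c\,\sigma_1\|w\|$, while the remaining components have size at most $\sigma_2\|w\|$. The angle to $U_1(g_n)$ is therefore $O(\sigma_2/\sigma_1)\to 0$. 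The same argument applied to the dual action $g\mapsto (g^{-1})^{T}$ on $(\mathbb{RP}^{d-1})^*$ treats hyperplanes. Combining the point and hyperplane statements gives the version on $\mathcal{F}_{1,d-1}$: a flag $(P,H)$ is transverse to $(y',B')$ exactly when $P\notin B'$ and $y'\notin H$.

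\textbf{Cartan property.} After passing to a subsequence I may assume $U_1(\rho(\gamma_n))\to p$ and $V_{d-1}(\rho(\gamma_n))\to B$. Both limits lie in the respective limit sets, by the definition of $\Lambda^\theta$ applied to $\rho(\gamma_n)$ and to $\rho(\gamma_n)^{-1}$. So $B=\xi^{d-1}(w)$ for some $w\in\Lambda^1(\Gamma_0)$. Since $\Lambda^1(\Gamma_0)$ is perfect, choose $z\in\Lambda^1(\Gamma_0)$ with $z\neq w$ and $z\neq y$. Transversality of $\Lambda^{1,d-1}(\rho(\Gamma_0))$ gives $\xi^1(z)\notin B$, so the linear-algebra fact yields $\rho(\gamma_n)\xi^1(z)\to p$.

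On the other side, $\Gamma_0$ acts on $\Lambda^1(\Gamma_0)\cong\partial_\infty\Gamma_0$ as a convergence group. Since $\gamma_nb_0\to x$ and $\gamma_n^{-1}b_0\to y$, we get $\gamma_n z\to x$ for every $z\neq y$. This north--south dynamics can also be read off from regular convex cocompactness, using that boundary points of $\overline{\mathcal C}$ are $C^1$ and extreme. By equivariance and continuity of $\xi$,
\[
\rho(\gamma_n)\xi^1(z)=\xi^1(\gamma_nz)\to \xi^1(x).
\]
Therefore $p=\xi^1(x)$. The same argument in the dual projective space identifies the limit of $U_{d-1}(\rho(\gamma_n))$ as $\xi^{d-1}(x)$. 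Since every subsequential limit is the same, the full sequences converge, proving (1).

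\textbf{Strong dynamics preserving property.} Apply (1) to the sequence $\gamma_n^{-1}$, whose roles of $x$ and $y$ are swapped. This gives $U_{d-1}(\rho(\gamma_n^{-1}))\to\xi^{d-1}(y)$, that is, $V_{d-1}(\rho(\gamma_n))\to\xi^{d-1}(y)$. Likewise the repelling point $V_1(\rho(\gamma_n))$, which is $U_1(\rho(\gamma_n)^{-1})$, converges to $\xi^1(y)$. A compact set $\mathcal K\subset\mathcal F_{1,d-1}$ transverse to $\xi(y,H_y)$ has its points bounded away from $\xi^{d-1}(y)$ and its hyperplanes bounded away from containing $\xi^1(y)$. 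The uniform version of the linear-algebra fact then gives $\rho(\gamma_n)\mathcal K\to\xi(x,H_x)$ uniformly, which is (2).

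\textbf{Main obstacle.} The step I expect to need the most care is justifying the north--south dynamics $\gamma_nz\to x$ for $z\neq y$ on $\Lambda^1(\Gamma_0)$, and checking that the subsequential limits really land in the limit set so that the transversality argument applies. I would take the first from the hyperbolicity of $\Gamma_0$ together with the $\Gamma_0$-equivariant identification $\Lambda^1(\Gamma_0)\cong\partial_\infty\Gamma_0$.
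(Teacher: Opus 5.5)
The paper gives no argument of its own for this lemma; it simply quotes Kapovich--Leeb--Porti and Canary--Zhang--Zimmer. Your proposal is a correct, self-contained reconstruction of the standard proof behind those citations: uniform attraction for $\{1,d-1\}$-divergent sequences whose Cartan attractors and repellers converge, then identification of the subsequential limits of $U_1$ and $U_{d-1}$ via transversality, equivariance of $\xi$, and the north--south dynamics of $\Gamma_0$ on $\Lambda^1(\Gamma_0)$.

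You should state explicitly that your appeal to perfectness of $\Lambda^1(\Gamma_0)$ is where non-elementarity enters, and that it is genuinely needed. For cyclic $\Gamma_0$ with a two-point limit set, any bijection of the limit sets is an equivariant homeomorphism, so $\xi$ could swap attractor and repeller, and then (1) fails. So either assume $\Gamma_0$ non-elementary or require $\xi$ to be the dynamics-preserving limit map.
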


\subsection{Geometry of regular convex cocompact groups}

First, we discuss some projective geometry. Let $V$ be a vector space, and let $\Omega \subset \mathbb{P}(V)$ be a proper convex domain.

Recall that $\Omega$ admits an $\operatorname{Aut}(\Omega)$-invariant Finsler metric, the \emph{Hilbert metric}, defined by
\[
d_{\Omega}(x,y) = \frac{1}{2} \log [a,x,y,b],
\]
where $a, b \in \partial \Omega$ are the intersection points of the line through $x$ and $y$ with $\partial \Omega$ such that $a, x, y, b$ appear in that order, and $[a,x,y,b]$ is the cross-ratio, which can be computed as $\frac{|y-a||b-x|}{|x-a||b-y|}$ in any affine chart. When $\Omega$ is an ellipsoid (the projectivization of a round convex cone), $(\Omega, d_{\Omega})$ can be identified isometrically with the hyperbolic space of constant curvature $-1$. Also recall that for any $x, y \in \Omega$, the straight line segment $[x,y]$ connecting them is a geodesic with respect to $d_{\Omega}$.

Now we discuss more on regular convex cocompact subgroups.  Let $(V,\Omega,\Gamma_0,\mathcal{C})$ be the objects described in Definition~\ref{definitionofregularconvexcocompact}. By Theorem~\ref{theoremofregularcocompact}, $\Gamma_0$ is projective Anosov; thus, it is hyperbolic and finitely generated (we always implicitly fix a finite generator set). By Selberg's lemma, we can assume without loss of generality (by passing to a finite-index subgroup) that $\Gamma_0$ is torsion-free. In this case, it can be verified that $\Gamma_0$ acts freely and by isometries on $(\Omega,d_{\Omega})$, and thus the quotient $\Omega/\Gamma_0$ is a Finsler manifold.

By the Švarc--Milnor lemma, $(\mathcal{C},d_{\Omega})$ is a $\delta$-hyperbolic space for some $\delta>0$. And for any $b_0\in \mathcal{C}$, the map
$$
\gamma\mapsto \gamma b_0
$$
is a quasi-isometry from $(\Gamma_0, d_{word})$ to $(\mathcal{C},d_{\Omega})$, meaning that for a fixed $b_0$, there exist constants $a>1$ and $A>0$ such that for any $\gamma\in \Gamma_0$,
$$
\frac{1}{a} d_{word}(\mathrm{id},\gamma)-A\le d_{\Omega}(b_0,\gamma b_0) \le a d_{word}(\mathrm{id},\gamma)+A.
$$

From the Morse lemma for quasi-geodesics in $\delta$-hyperbolic spaces (see, for example, Canary~\cite[Theorem~2.2]{AnosovNotes}), we have the following lemma.

\begin{lemma}\label{lemmaofquasigeodesics}
Fix $b_0 \in \mathcal{C}$. There exist constants $R > 0$, $a>1$, and $A>0$ such that for any $x \in \overline{\mathcal{C}} \cap \partial\Omega$, there exists a sequence $(\gamma_n)_{n \in \mathbb{Z}^{>0}}$ in $\Gamma_0$ satisfying:
\begin{itemize}
    \item $\gamma_n b_0 \to x$ as $n \to +\infty$.
    \item The sequence $(\gamma_n)_{n \in \mathbb{Z}^{>0}}$ represents an $(a,A)$-quasi-geodesic ray in the Cayley graph of $\Gamma_0$, i.e., for any $n,m>0$,
    \[
    \frac{1}{a}|n-m|-A \le d_{word}(\gamma_n, \gamma_m) \le a |n-m| +A.
    \]

    \item $[b_0,x) \subset \bigcup_{n \in \mathbb{Z}^{>0}} B_{d_{\Omega}}(\gamma_n b_0, R)$.
\end{itemize}
\end{lemma}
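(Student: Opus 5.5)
The plan is to obtain Lemma~\ref{lemmaofquasigeodesics} by combining the cocompactness of the $\Gamma_0$-action on $\mathcal{C}$ with the Morse lemma in the $\delta$-hyperbolic space $(\mathcal{C},d_\Omega)$.

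\textbf{Step 1: discretize the segment.} Since $\mathcal{C}$ is closed and convex, and $x\in\overline{\mathcal{C}}\cap\partial\Omega$, the half-open segment $[b_0,x)$ lies in $\mathcal{C}$. Straight segments are $d_\Omega$-geodesics, so $[b_0,x)$ is a geodesic ray in $\mathcal{C}$. Parametrize it by arclength as $c:[0,\infty)\to\mathcal{C}$; its length is infinite because $x\in\partial\Omega$.

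\textbf{Step 2: choose the group elements.} Because $\mathcal{C}/\Gamma_0$ is compact, there is $R_0>0$ with $\mathcal{C}\subset\bigcup_{\gamma\in\Gamma_0}B_{d_\Omega}(\gamma b_0,R_0)$. For each integer $n\ge1$, pick $\gamma_n$ with $d_\Omega(\gamma_n b_0,c(n))<R_0$. Every point of the ray lies within distance $1/2$ of some $c(n)$, so the ray is covered by the balls $B_{d_\Omega}(\gamma_n b_0,R)$ with $R=R_0+1$.

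\textbf{Step 3: the quasi-geodesic estimate.} By the triangle inequality,
\[
|n-m|-2R_0\le d_\Omega(\gamma_n b_0,\gamma_m b_0)\le |n-m|+2R_0 .
\]
Apply the Švarc--Milnor quasi-isometry $\gamma\mapsto\gamma b_0$, using that $d_{word}(\gamma_n,\gamma_m)=d_{word}(\mathrm{id},\gamma_n^{-1}\gamma_m)$ and that $d_\Omega$ is invariant. This converts the display into the required $(a,A)$ bounds, with $a$ and $A$ depending only on the quasi-isometry constants and $R_0$, and not on $x$.

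\textbf{Step 4: convergence.} It remains to show $\gamma_n b_0\to x$ in $\mathbb{P}(V)$. We know $c(n)\to x$ and $d_\Omega(\gamma_n b_0,c(n))<R_0$. A standard Hilbert-geometry fact says that if $p_n\to x\in\partial\Omega$ and $d_\Omega(p_n,q_n)$ is bounded, then $q_n$ accumulates only on points $z\in\partial\Omega$ such that $[x,z]\subset\partial\Omega$. Every accumulation point of $\gamma_n b_0$ lies in $\overline{\mathcal{C}}\cap\partial\Omega$. Since $x$ is an extreme point of $\partial\Omega$ by regularity, the only such $z$ is $z=x$, so $\gamma_n b_0\to x$.

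I expect Step~4 to be the main obstacle, since it is the only place where regularity (extremality) enters. Every other step is formal from cocompactness and the fact that straight segments are geodesics. The Morse lemma cited in the statement is not even needed in this direction, because the ray is already an honest geodesic and we simply shadow it. It would be needed only if one started instead from a word geodesic in $\Gamma_0$ and had to show it tracks $[b_0,x)$.
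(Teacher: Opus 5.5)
Your proof is correct in substance, but it takes a different route from the paper. The paper gives no argument beyond citing the Morse lemma. The intended proof starts from a geodesic ray in the Cayley graph of $\Gamma_0$ aimed at the boundary point determined by $x$, transports it into $\mathcal{C}$ by the \v{S}varc--Milnor quasi-isometry, and then uses the Morse lemma in the $\delta$-hyperbolic space $(\mathcal{C},d_\Omega)$ to see that the orbit stays uniformly close to $[b_0,x)$.

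You work in the opposite direction. Since $[b_0,x)\subset\mathcal{C}$ is already a $d_\Omega$-geodesic ray, you simply pick orbit points near its integer points. The quasi-geodesic bounds then follow at once, with constants $a$ and $a(2R_0+A)$ independent of $x$. This is more elementary: it needs neither the Morse lemma nor any passage through Gromov boundaries. It also gives two-sided tracking, $d_\Omega(\gamma_n b_0,[b_0,x))<R_0$ for every $n$. That is stronger than the covering bullet in the statement, and it is what the paper actually uses later, when it asserts $x\in\hat{\mathcal{O}}_{R/2}(b_0,\gamma_n b_0)$ for all $n$. The price is that you must prove $\gamma_n b_0\to x$ by hand. (A cosmetic point: since $n\ge 1$, the points $c(t)$ with $t<1/2$ are within $1$ of $c(1)$, not $1/2$; your choice $R=R_0+1$ already absorbs this.)

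That convergence step needs one repair. From the fact as you state it, namely $[x,z]\subset\partial\Omega$, extremality of $x$ alone does not give $z=x$. In the paper's sense, $x$ is extreme if no non-trivial \emph{open} segment of $\partial\Omega$ contains it, and such a point can still be an endpoint of a boundary segment.

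This really happens for regular convex cocompact groups. Take a convex cocompact Schottky subgroup of $\mathsf{PO}(2,1)$ acting on the domain cut out by the tangent lines to $\partial\mathbb{D}$ at its limit points, with $\mathcal{C}$ the convex hull of the limit set. Each endpoint $a$ of a gap of the limit set is a $C^1$ extreme point, yet a segment of the tangent line at $a$ lies in $\partial\Omega$.

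Either of two one-line fixes works:
\begin{enumerate}
\item Quote the sharper standard form of the Hilbert-geometry fact: $z$ lies in the open face $F_\Omega(x)$, and this face is $\{x\}$ when $x$ is extreme.
\item Use your own observation that $z\in\overline{\mathcal{C}}\cap\partial\Omega$. By convexity of $\overline{\mathcal{C}}$, we get $[x,z]\subset\overline{\mathcal{C}}\cap\partial\Omega$. If $z\neq x$, the midpoint of $[x,z]$ would be a point of $\overline{\mathcal{C}}\cap\partial\Omega$ lying in an open segment of $\partial\Omega$, which contradicts regularity.
\end{enumerate}
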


\subsection{Geometry of the shadows}
Now, we introduce the concept of shadows.

\begin{definition}
    For any $b \in \overline{\Omega}$, $z \in \Omega$, and $R > 0$, the shadow of $z$ viewed from $b$ with radius $R$ is defined as
    $$
    \mathcal{O}_{R}(b,z) := \{x \in \partial \Omega \mid [b,x) \cap \overline{B}_{d_{\Omega}}(z,R) \neq \emptyset\}.
    $$

    If $\Gamma_0$ is a regular convex cocompact subgroup with respect to $(V, \Omega, \mathcal{C})$, we let
$$
\hat{\mathcal{O}}_R(b,z) := \mathcal{O}_R(b,z) \cap \Lambda^1(\Gamma_0).
$$
    and
\[
\tilde{\mathcal{O}}_R(b,z) := \{(x,H_x)\mid x\in \hat{\mathcal{O}}_R(b,z) \}.
\]

\end{definition}

We state a Vitali-type covering lemma for shadows, due to Roblin.

\begin{lemma}[{Roblin~\cite[p.~23]{shadowvitali}}]\label{lemmaofshadowvitali}
    For any $b_0 \in \Omega$ and $R > 0$, if $I \subset \Gamma_0$, then there exists a subset $J \subset I$ such that the family $\{\mathcal{O}_{R}(b_0, \gamma b_0) \mid \gamma \in J\}$ is pairwise disjoint, and
    $$
    \bigcup_{\gamma \in I} \mathcal{O}_{R}(b_0, \gamma b_0) \subset \bigcup_{\gamma \in J} \mathcal{O}_{5R}(b_0, \gamma b_0).
    $$
\end{lemma}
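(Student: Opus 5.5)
The plan is to run the standard Vitali argument, with the hyperbolic geometry of $(\mathcal{C},d_\Omega)$ supplying the ``engulfing'' property. The key geometric fact is this: if two shadows $\mathcal{O}_R(b_0,\gamma b_0)$ and $\mathcal{O}_R(b_0,\eta b_0)$ intersect and $d_\Omega(b_0,\eta b_0)\ge d_\Omega(b_0,\gamma b_0)$, then $\mathcal{O}_R(b_0,\eta b_0)\subset \mathcal{O}_{5R}(b_0,\gamma b_0)$. Given this, we select greedily by decreasing ``size'', which means increasing distance $d_\Omega(b_0,\gamma b_0)$.

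\textbf{Selection step.} Since $\Gamma_0$ is discrete and acts properly, for each $n$ only finitely many $\gamma\in I$ satisfy $d_\Omega(b_0,\gamma b_0)\le n$. Enumerate $I=\{\gamma_1,\gamma_2,\dots\}$ so that $d_\Omega(b_0,\gamma_k b_0)$ is nondecreasing. Build $J$ inductively: add $\gamma_k$ if $\mathcal{O}_R(b_0,\gamma_k b_0)$ is disjoint from all shadows already chosen. The chosen family is pairwise disjoint by construction. Every $\gamma\in I\setminus J$ was rejected because its shadow met the shadow of some earlier $\gamma'\in J$, and $d_\Omega(b_0,\gamma' b_0)\le d_\Omega(b_0,\gamma b_0)$. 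So it remains to prove the geometric fact.

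\textbf{Geometric fact.} Suppose $x\in \mathcal{O}_R(b_0,\gamma' b_0)\cap\mathcal{O}_R(b_0,\gamma b_0)$, and let $z\in\mathcal{O}_R(b_0,\gamma b_0)$ be arbitrary. Then there are points $p'\in[b_0,x)$ with $d_\Omega(p',\gamma'b_0)\le R$, $p\in[b_0,x)$ with $d_\Omega(p,\gamma b_0)\le R$, and $q\in[b_0,z)$ with $d_\Omega(q,\gamma b_0)\le R$. By the triangle inequality, $d_\Omega(b_0,p')\le d_\Omega(b_0,p)+2R$. Since $[b_0,x)$ is a geodesic, the point $p'$ lies on $[b_0,p]$ up to an error of $2R$ in position. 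Hence there is a point $\tilde p\in[b_0,p]$ with $d_\Omega(\tilde p,\gamma' b_0)\le 3R$.

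Now use the segments $[b_0,p]$ and $[b_0,q]$, whose endpoints satisfy $d_\Omega(p,q)\le 2R$. Here the plan is to avoid relying on $\delta$-hyperbolicity alone, because that would contribute a $\delta$ term, whereas the lemma is stated with the exact constant $5R$. Instead I would use the convexity of the Hilbert metric along straight segments issuing from a common point: for a proper convex domain, the function $t\mapsto d_\Omega(c_1(t),c_2(t))$ for linearly parametrized segments from $b_0$ satisfies the comparison $d_\Omega(c_1(t),c_2(t))\le d_\Omega(c_1(1),c_2(1))$. This is the standard Busemann-type convexity of the Hilbert metric for segments sharing an endpoint, and I would cite it. It produces a point $\tilde q\in[b_0,q]\subset[b_0,z)$ with $d_\Omega(\tilde q,\tilde p)\le 2R$. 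Therefore $d_\Omega(\tilde q,\gamma' b_0)\le 5R$, so $z\in \mathcal{O}_{5R}(b_0,\gamma' b_0)$, which is the fact we needed.

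\textbf{Main obstacle.} The delicate step is the segment comparison, and specifically obtaining the constant exactly $5R$ rather than $5R+O(\delta)$. If the monotonicity along a common-endpoint parametrization is not available in the needed form, I would instead prove the statement with a constant $C(R,\delta)$ via the thin-triangle property of $(\mathcal{C},d_\Omega)$. Such a weaker constant is harmless for the later Patterson--Sullivan applications, since shadow lemmas hold for all large radii. Roblin's original argument in CAT$(-1)$ spaces uses exactly this comparison; in the Hilbert geometry the straight-segment convexity should play the same role.
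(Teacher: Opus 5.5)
Your argument is correct and is exactly Roblin's proof, which the paper only cites: greedy selection in order of increasing $d_\Omega(b_0,\gamma b_0)$, plus the engulfing inclusion $\mathcal{O}_R(b_0,\gamma b_0)\subset \mathcal{O}_{5R}(b_0,\gamma' b_0)$ obtained as $3R+2R$, with the CAT$(-1)$ convexity replaced by a Hilbert-geometry comparison, so you are also supplying the adaptation the paper leaves implicit. That comparison is not Busemann convexity, which fails for general $\Omega$, but it does hold for the affine parametrization in a chart containing $\overline{\Omega}$: the homothety $h_t$ of ratio $t\in(0,1]$ centered at $b_0$ maps $\Omega$ into itself, so $d_\Omega(h_t p,h_t q)\le d_{h_t\Omega}(h_t p,h_t q)=d_\Omega(p,q)$, hence the constant $5R$ is exact and your fallback is unnecessary.
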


Now, we present a correspondence between shadows and ellipses (see Section~\ref{sectionofellipse}). For any $1$-divergent $g \in \mathsf{PGL}(d,\mathbb{R})$ and $C > 0$, we can define
$$
\mathcal{E}(g,C) := \mathcal{E}\left(u(g)[e_1]; u(g)[e_2], \dots, u(g)[e_d]; C \frac{\sigma_2(g)}{\sigma_1(g)}, \dots, C\frac{\sigma_d(g)}{\sigma_1(g)}\right),
$$
which is independent of the choice of the singular value decomposition of $g$. Note that if $\Gamma$ is a projective Anosov subgroup, then any infinite order element is $1,d-1$-divergent.

\begin{lemma}\label{lemmaofellipseandshadows}
    Let $\Gamma_0$ be a torsion-free regular convex cocompact group with respect to $(V,\Omega,\mathcal{C})$, and let $\rho:\Gamma_0\to \mathsf{PGL}(d,\mathbb{R})$ be a projective Anosov representation with limit map $\xi$. 

    Fix $b_0\in \mathcal{C}$. 
    \begin{enumerate}

        \item For any $C>0$, there exists $R>0$ such that for all $\gamma\in \Gamma_0 \setminus \{\mathrm{id}\}$,
        \[
            \mathcal{E}\left(\rho(\gamma),C\right)\cap \Lambda^1(\rho(\Gamma_0))\subset \xi^1( \hat{\mathcal{O}}_R(b_0,\gamma b_0)).
        \]
        \item For any $R>0$, there exists $C>0$ such that for all but finitely many $\gamma\in \Gamma_0 \setminus \{\mathrm{id}\}$,
        $$
        \xi^1( \hat{\mathcal{O}}_R(b_0,\gamma b_0))\subset \mathcal{E}(\rho(\gamma), C).
        $$
    \end{enumerate}
\end{lemma}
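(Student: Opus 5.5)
The plan is to pass everything through the singular value decomposition of $\rho(\gamma)$ and the strongly dynamics preserving property of Lemma~\ref{lemmaofcartanproperty}. Both inclusions will be proved by contradiction along sequences, using compactness of $\Lambda^{1,d-1}(\rho(\Gamma_0))$. The basic observation is Lemma~\ref{lemmaofmapofellipse}: writing $g=\rho(\gamma)$, the ellipse $\mathcal{E}(g,C)$ is exactly $g\mathcal{B}((V_{d-1}(g))^{\perp},C)$. So a point $g[w]$ lies in $\mathcal{E}(g,C)$ if and only if $[w]$ lies in a fixed-size ball around $v(g)^{-1}[e_1]$, and this ball is uniformly transverse to the repelling hyperplane $V_{d-1}(g)$ once $C$ is small. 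For larger $C$ the ball is the complement of a neighborhood of $V_{d-1}(g)$, with size controlled by $C$. Thus membership in $\mathcal{E}(g,C)$ is equivalent to: the preimage $g^{-1}x$ stays at distance bounded below (in terms of $C$) from the repelling hyperplane $V_{d-1}(g)$.

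For (1), take $x=\xi^1(z)\in\mathcal{E}(\rho(\gamma),C)$. Then $\rho(\gamma)^{-1}x$ is at distance at least $\epsilon(C)$ from $V_{d-1}(\rho(\gamma))$. Suppose no $R$ works. Then there are $\gamma_n$ and $z_n$ with $\xi^1(z_n)\in\mathcal{E}(\rho(\gamma_n),C)$ but $z_n\notin\hat{\mathcal{O}}_{n}(b_0,\gamma_nb_0)$. Translating by $\gamma_n^{-1}$ and using $\Gamma_0$-invariance of $d_\Omega$, the shadow condition says that $\gamma_n^{-1}z_n$ is close in the Gromov boundary to $\lim\gamma_n^{-1}b_0 = y$. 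Here I use hyperbolicity of $(\mathcal{C},d_\Omega)$ together with the fact that $[b_0,z)$ misses a large ball around $\gamma_nb_0$, which forces the Gromov product $(\gamma_n^{-1}b_0\,|\,\gamma_n^{-1}z_n)_{b_0}\to\infty$. Passing to subsequences, $\gamma_n^{-1}z_n\to y$. By continuity of $\xi$, $\rho(\gamma_n)^{-1}\xi^1(z_n)\to\xi^1(y)$. By the Cartan property applied to $\gamma_n^{-1}$, we also have $V_{d-1}(\rho(\gamma_n))=U_{d-1}(\rho(\gamma_n^{-1}))\to\xi^{d-1}(y)$. Since $\xi^1(y)\in\xi^{d-1}(y)$, the distance from $\rho(\gamma_n)^{-1}\xi^1(z_n)$ to $V_{d-1}(\rho(\gamma_n))$ tends to $0$, contradicting the lower bound $\epsilon(C)$. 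The case $\gamma_n$ bounded is handled separately, since then $R$ large makes the shadow contain all of $\Lambda^1$.

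For (2), argue symmetrically. If $z\in\hat{\mathcal{O}}_R(b_0,\gamma b_0)$, then the geodesic ray from $\gamma^{-1}b_0$ to $\gamma^{-1}z$ passes within $R$ of $b_0$. Hence $\gamma^{-1}z$ stays a uniform Gromov distance away from $\gamma^{-1}b_0$, and therefore away from its limit points $y$. Suppose no $C$ works. Then we get $\gamma_n\to\infty$ and $z_n$ in the shadows with $\rho(\gamma_n)^{-1}\xi^1(z_n)$ approaching $V_{d-1}(\rho(\gamma_n))$. After subsequences, $\gamma_n^{-1}b_0\to y$ and $\gamma_n^{-1}z_n\to w\neq y$. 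The Cartan property gives $V_{d-1}(\rho(\gamma_n))\to\xi^{d-1}(y)$, so $\xi^1(w)\in\xi^{d-1}(y)$. This contradicts transversality of $\xi(w,H_w)$ and $\xi(y,H_y)$ (Definition~\ref{definitionoftransverse}). The finitely many exceptions are the $\gamma$ with $\rho(\gamma)$ not yet far enough out, which is where "all but finitely many" enters.

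The main obstacle is the translation between the shadow condition in $(\Omega,d_\Omega)$ and convergence in $\partial\Omega$ for the translated points. This is needed in both directions: missing large balls forces convergence to $y$, and hitting a fixed ball keeps the point away from $y$. I would handle it with the Morse lemma (Lemma~\ref{lemmaofquasigeodesics}) and the standard fact that in a proper $\delta$-hyperbolic space shadows are comparable to Gromov-product neighborhoods. I also need the quantitative equivalence between "$g[w]\in\mathcal{E}(g,C)$" and "$d_\angle([w],V_{d-1}(g))\ge\epsilon(C)$". That equivalence requires a short computation with $g=u(g)a(g)v(g)$, and it is only needed up to multiplicative constants.
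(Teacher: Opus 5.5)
Your proposal is correct and follows essentially the paper's argument. You translate by $\rho(\gamma)^{-1}$ via Lemma~\ref{lemmaofmapofellipse}, so that ellipse membership becomes a lower bound on the distance to $V_{d-1}(\rho(\gamma))$. You then argue by contradiction along a sequence, using the Cartan property $V_{d-1}(\rho(\gamma_n))\to\xi^{d-1}(H_y)$ together with transversality of the limit set. The differences are cosmetic: you track individual points $\gamma_n^{-1}z_n$ rather than whole shadows, and the shadow-to-boundary step you plan via Gromov products can be done more directly, as the paper does, by noting that $(y,w)\subset\Omega$ for distinct limit points $y\neq w$.
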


\begin{proof}
    \textbf{Proof of (1):} By equivariance, it is equivalent to prove that there exist constants $R>0$  such that for any $\gamma\in \Gamma_0$,
    $$
    \rho(\gamma)^{-1}\mathcal{E}\left(\rho(\gamma),C\right)\cap \Lambda^1(\rho(\Gamma_0))\subset \xi^1( \hat{\mathcal{O}}_R(\gamma^{-1}b_0, b_0)).
    $$
    By Lemma~\ref{lemmaofmapofellipse}, this is equivalent to prove that there exists $R>0$ such that for any $\gamma\in \Gamma_0$,
    \begin{equation}\label{equationofshadowandcartanballs}
        \mathcal{B}\left(V_{d-1}(\rho(\gamma))^{\perp},C\right)\cap \Lambda^1(\rho(\Gamma_0))\subset \xi^1( \hat{\mathcal{O}}_R(\gamma^{-1}b_0, b_0)).
    \end{equation}

    Suppose this is not true, then we can find a sequence $(\gamma_n)_{n \in \mathbb{N}}$ with respect to $R_n = n$ such that
    \begin{equation}\label{equationofleftinclusionfailure}
        \mathcal{B}\left(V_{d-1}(\rho(\gamma_n))^{\perp},C\right)\cap \Lambda^1(\rho(\Gamma_0)) \not\subset \xi^1( \hat{\mathcal{O}}_{n}(\gamma_n^{-1}b_0, b_0)).
    \end{equation}

    The sequence $(\gamma_n)_{n \in \mathbb{N}}$ must be unbounded; otherwise, $d_{\Omega}(\gamma_n^{-1} b_0, b_0)$ would be bounded, and there would exist $N>0$ such that for all $n>N$, the shadow $\hat{\mathcal{O}}_{n}(\gamma_n^{-1}b_0, b_0) = \Lambda^1(\Gamma_0)$, which would trivially contain the left-hand side. Thus, by passing to a subsequence, we may assume $\gamma_n^{-1} b_0\to y\in \Lambda^1(\Gamma_0)$. 
    Note that as $n \to \infty$, $B_{d_{\Omega}}(b_0,n)$ exhausts $\Omega$, and for any $x \neq y\in \Lambda^1(\Gamma_0)$, $(x,y) \subset \Omega$,  so $\hat{\mathcal{O}}_n(\gamma_n^{-1} b_0, b_0)\setminus \{y\}$ exhausts the set $\Lambda^1(\Gamma_0) \setminus \{y\}$. Consequently, its image under $\xi^1$ exhausts $\Lambda^1(\rho(\Gamma_0)) \setminus \{\xi^1(y)\}$, which consists exactly of the points in $\Lambda^1(\rho(\Gamma_0))$ that are transverse to $\xi^{d-1}(H_y)$. 
    On the other hand, by the Cartan property in Lemma~\ref{lemmaofcartanproperty}, $V_{d-1}(\rho(\gamma_n)) \to \xi^{d-1}(H_y)$. Therefore, the regions $\mathcal{B}\left(V_{d-1}(\rho(\gamma_n))^{\perp},C\right)$ will eventually be swallowed by the exhausting domains transverse to $\xi^{d-1}(H_y)$. Thus, for $n \gg 1$, Equation~(\ref{equationofleftinclusionfailure}) cannot hold, which is a contradiction.

    \textbf{Proof of (2):} 

    Fix $R>0$ and suppose that such a $C$ does not exist. Then, similar to the proof of (1), we can find a diverging sequence $C_n \to \infty$ and a diverging sequence of distinct elements $(\gamma_n)_{n \in \mathbb{N}}$ in $\Gamma_0$ (the divergence is required because the statement holds for all but finitely many $\gamma$) such that
    \begin{equation}\label{failureofrightinclusion}
        \xi^1( \hat{\mathcal{O}}_R(\gamma_n^{-1}b_0, b_0))\not \subset \mathcal{B}\left(V_{d-1}(\rho(\gamma_n))^{\perp},C_n\right).
    \end{equation}
    By passing to a subsequence, we assume $\gamma_n^{-1} b_0 \to y \in \Lambda^1(\Gamma_0)$. 
    The shadow $\hat{\mathcal{O}}_R(\gamma_n^{-1}b_0, b_0)$ converges to $\hat{\mathcal{O}}_R(y, b_0)$, which is a compact subset strictly bounded away from $y$. Its image is therefore a compact subset transverse to $\xi^{d-1}(H_y)$. 
    Simultaneously, as $V_{d-1}(\rho(\gamma_n)) \to \xi^{d-1}(H_y)$, the regions $\mathcal{B}\left(V_{d-1}(\rho(\gamma_n))^{\perp},C_n\right)$ exhaust the entire space of directions transverse to $\xi^{d-1}(H_y)$. Thus, they must eventually contain the image of the shadow, contradicting Equation~(\ref{failureofrightinclusion}).
\end{proof}

\section{Patterson-Sullivan Theory}
 We will follow Canary--Zhang--Zimmer~\cite{CaZhZi3} to introduce the constructions of Patterson--Sullivan measures.

\subsection{Definition and Properties of the Patterson-Sullivan Measure}
We refer to the notation and conventions established in Section~\ref{Sectionofconventions}. For any non-empty subset $\theta = \{i_1,\dots ,i_k\} \subset \{1,2, \dots, d-1\}$ with $1 \le i_1 < i_2 < \dots < i_k \le d-1$, we define the subspace
\[
\mathfrak{a}_{\theta} := \bigcap_{j \notin \theta} \ker \alpha_j \subset \mathfrak{a},
\]
and define the projection $\pi_\theta : \mathfrak{a} \to \mathfrak{a}_\theta$ by the condition
\[
\omega_{i_s}(\pi_\theta(a)) = \omega_{i_s}(a) \quad \text{for all } i_s \in \theta \text{ and } a \in \mathfrak{a}.
\]
We identify the dual space $\mathfrak{a}_\theta^* \subset \mathfrak{a}^*$ with the span of the fundamental weights $\omega_{i_s}$ for $i_s \in \theta$:
$$
\mathfrak{a}_\theta^* := \operatorname{span}_{i_s \in \theta}(\omega_{i_s}) \subset \mathfrak{a}^*.
$$

\begin{definition}[Partial Iwasawa cocycle]\label{iwasawa}
For each $\theta$, the \emph{partial Iwasawa cocycle} is the map
\[
B_\theta : \mathsf{PGL}(d, \mathbb{R}) \times \mathcal{F}_\theta \to \mathfrak{a}_\theta
\]
determined by the condition
\[
\omega_{i_s}(B_\theta(g,F)) = \log \frac{\|g v_{i_s}\|}{\|v_{i_s}\|} \quad \text{for all } g\in \mathsf{PGL}(d,\mathbb{R}),\; F\in \mathcal{F}_\theta, \text{ and } i_s\in \theta,
\]
where $v_{i_s}$ is a non-zero vector in $\bigwedge^{i_s}(F^{(i_s)})$ (unique up to a scalar multiple), and $\|\cdot\|$ denotes the norm on $\bigwedge^{i_s}\mathbb{R}^d$ induced by the standard Euclidean norm on $\mathbb{R}^d$.
\end{definition}

Finally, $B_\theta$ satisfies the cocycle identity:
\[
B_\theta(gh, F) = B_\theta(g, hF) + B_\theta(h, F).
\]
For further details, see for example Quint~\cite{quintoriginal} or Pozzetti--Sambarino--Wienhard~\cite[Subsection 4.6]{Liplim}.

\vspace{3mm}
Let $\Gamma\subset \mathsf{PGL}(d,\mathbb{R})$ be a discrete subgroup and $\phi \in \mathfrak{a}^*$. The \emph{critical exponent} of $\Gamma$ with respect to $\phi$ is defined by
$$
\delta^{\phi}(\Gamma) := \inf \left\{ s \in [0, \infty) \;\middle|\; \mathcal{Q}_{\Gamma}^{\phi}(s)  < \infty \right\},
$$
where the $\phi$-Poincare series is defined by
\[
\mathcal{Q}_{\Gamma}^{\phi}(s) = \sum_{\gamma \in \Gamma} e^{-s\, \phi(\kappa(\gamma))}.
\]

\begin{definition}[Patterson--Sullivan measure]
Fix a non-empty subset $\theta\subset \{1,\dots,d-1\}$ and let $\Gamma\subset \mathsf{PGL}(d,\mathbb{R})$ be a $\theta$-divergent discrete subgroup. Let $\phi \in \mathfrak{a}_\theta^*$ be such that $\delta^{\phi}(\Gamma) < \infty$. A \emph{$\phi$-Patterson--Sullivan measure} for $\Gamma$ is a regular Borel probability measure $\mu$ supported on $\Lambda^{\theta}(\Gamma)$ satisfying the transformation rule
$$
\frac{d\gamma_* \mu}{d\mu}(\xi) = e^{-\delta^{\phi}(\Gamma) \phi\left(B_\theta(\gamma^{-1}, \xi)\right)}
$$
for $\mu$-almost every $\xi \in \Lambda^\theta(\Gamma)$ and every $\gamma \in \Gamma$, where $\gamma_* \mu$ denotes the pushforward of $\mu$ under the action of $\gamma$.
\end{definition}

A Patterson--Sullivan measure always exists for projective Anosov subgroups.

\begin{theorem}[{Sambarino~\cite[Section~5.5]{sambarinopattersonsullivan}}]\label{theoremofexistenceofpattersonsullivan}
Let $\theta = \{1,d-1\}$ and let $\Gamma\subset \mathsf{PGL}(d,\mathbb{R})$ be a projective Anosov subgroup. Then for any $\phi\in \mathfrak{a}_\theta^*$ that evaluates positively on $\mathfrak{a}^{+}$, we have:
\begin{enumerate}
    \item $\delta^{\phi}(\Gamma)<\infty$.
    \item The Poincar\'e series $\sum_{\gamma \in \Gamma} e^{-s\, \phi(\kappa(\gamma))}$ diverges at the critical exponent $s = \delta^{\phi}(\Gamma)$.
    \item There exists a $\phi$-Patterson--Sullivan measure on $\Lambda^\theta(\Gamma)$. 
\end{enumerate}
\end{theorem}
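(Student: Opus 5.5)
The plan is to prove the three assertions in order: finiteness by comparing $\phi$ with the first simple root, divergence by a quasi-multiplicativity estimate along spheres in the Cayley graph, and existence by Patterson's construction.

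\textbf{(1) Finiteness.} Write $\phi = a\,\omega_1 + b\,\omega_{d-1}$.
\begin{itemize}
\item Positivity on $\mathfrak{a}^+$ should give a constant $c>0$ with $\phi \ge c(\omega_1+\omega_{d-1})$ on $\overline{\mathfrak{a}^+}$. Since $\phi(X)$ depends only on $(\omega_1(X),\omega_{d-1}(X))$, this should follow from homogeneity and a compactness argument on this two-dimensional image cone.
\item The trace-zero condition gives $d\,\omega_1 \ge (d-1)\alpha_1$ on $\overline{\mathfrak{a}^+}$, because $\lambda_1-\lambda_j\ge \alpha_1$ for all $j\ge 2$. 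Hence $\phi(\kappa(\gamma)) \ge c' \log\frac{\sigma_1}{\sigma_2}(\gamma)$.
\item By Theorem~\ref{theoremofcriterionofsingularvaluegaps}, this is at least $\frac{c'}{a}d_{word}(\mathrm{id},\gamma) - c'A$.
\item Spheres in the Cayley graph grow at most exponentially, so $\mathcal{Q}^{\phi}_\Gamma(s)$ converges for large $s$, and $\delta^\phi(\Gamma)<\infty$.
\end{itemize}

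\textbf{(2) Divergence at $s=\delta^\phi(\Gamma)$.} This is the step I expect to be the main obstacle. Set $S_n=\{\gamma : d_{word}(\mathrm{id},\gamma)=n\}$ and $a_n(s)=\sum_{\gamma\in S_n} e^{-s\phi(\kappa(\gamma))}$. I would establish a quasi-multiplicativity estimate: there is a constant $C$ such that, whenever $\gamma = \gamma_1\gamma_2$ with $|\gamma|=|\gamma_1|+|\gamma_2|$ (a geodesic concatenation),
\[
\bigl|\phi(\kappa(\gamma_1\gamma_2)) - \phi(\kappa(\gamma_1)) - \phi(\kappa(\gamma_2))\bigr| \le C.
\]
Here is how I would get it.
\begin{itemize}
\item Write $\omega_1(\kappa(\gamma_1\gamma_2)) = \log\|\gamma_1\gamma_2\|$, and similarly for $\omega_{d-1}$ using $\wedge^{d-1}$.
\item Along a geodesic, the Morse lemma and the Cartan/strong dynamics properties of Lemma~\ref{lemmaofcartanproperty} keep $U_1(\gamma_2)$ uniformly transverse to $V_{d-1}(\gamma_1)$. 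This is proved by a compactness/contradiction argument.
\item Uniform transversality gives $\|\gamma_1\gamma_2\| \asymp \|\gamma_1\|\,\|\gamma_2\|$.
\end{itemize}
From quasi-multiplicativity one gets $a_{n+m}(s) \asymp a_n(s)\,a_m(s)$ up to multiplicative constants: the upper bound uses that every $\gamma\in S_{n+m}$ splits along a geodesic, and the lower bound uses a bounded-multiplicity map from $S_n\times S_m$. By Fekete's lemma, $\delta^\phi(\Gamma)$ is the unique zero of $\lim_n \frac1n\log a_n(s)$, and $a_n(\delta^{\phi}(\Gamma))$ is bounded below by a positive constant. Hence $\sum_n a_n(\delta^{\phi}(\Gamma))=\infty$.

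If the lower half of this comparison fails (it needs the multiplicity control), the fallback is the thermodynamic route. One realizes $\phi$ as a Hölder reparametrization of the Gromov geodesic flow of $\Gamma$; then $\delta^\phi(\Gamma)$ equals its entropy, and the Poincaré series diverges by the Bowen--Margulis count of periodic orbits.

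\textbf{(3) Existence of the measure.}
\begin{itemize}
\item Take weak-$*$ limits, as $s\downarrow\delta^\phi(\Gamma)$, of
\[
\mu_s = \mathcal{Q}^\phi_\Gamma(s)^{-1}\sum_\gamma e^{-s\phi(\kappa(\gamma))}\,\mathrm{Dirac}_{U_\theta(\gamma)}.
\]
Divergence from (2) forces any limit to be supported on $\Lambda^\theta(\Gamma)$, so Patterson's slowly-varying correction is unnecessary.
\item For the transformation rule, I would check that for fixed $\eta\in\Gamma$ and $\gamma\to\infty$,
\[
\phi(\kappa(\eta\gamma)) - \phi(\kappa(\gamma)) - \phi\bigl(B_\theta(\eta,U_\theta(\gamma))\bigr) \to 0.
\]
This uses the Cartan property, the transversality of $U_\theta(\gamma)$ with $V(\eta)$ for $\gamma$ outside a small neighborhood, and Lemma~\ref{lemmaofuniformcontinuity}.
\item Change variables in the sum defining $\eta_*\mu_s$ and pass to the limit to obtain the Radon--Nikodym formula.
\end{itemize}
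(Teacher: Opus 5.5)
\textbf{The gap is in step (1).} Your first bullet there is false, and the chain of inequalities in (1) rests on it. Positivity of $\phi$ on the \emph{open} chamber $\mathfrak{a}^+$ only gives $\phi\ge 0$ on $\overline{\mathfrak{a}^+}$, so the compactness argument has nothing to work with: $\phi$ may vanish on parts of the boundary.

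For $d\ge 3$, the image of $\overline{\mathfrak{a}^+}$ under $X\mapsto(\omega_1(X),\omega_{d-1}(X))$ is the cone $\{(x,y): x\le (d-1)y,\ y\le (d-1)x\}$. Its two boundary rays come from the faces $\{\lambda_1=\dots=\lambda_{d-1}\}\subset\ker\alpha_1$ and $\{\lambda_2=\dots=\lambda_d\}\subset\ker\alpha_{d-1}$. An admissible $\phi$ may vanish on either ray.

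\begin{itemize}
\item For $d=3$, take $\phi=\alpha_1=2\omega_1-\omega_2$. This is exactly the functional the paper feeds into this theorem in the proof of Theorem~\ref{equivalenceofdim1}. It satisfies $\phi(\diag(1,1,-2))=0$, while $(\omega_1+\omega_2)(\diag(1,1,-2))=3$.
\item Likewise, $\phi=(d-1)\omega_{d-1}-\omega_1$ is positive on $\mathfrak{a}^+$ but vanishes on $\{\lambda_2=\dots=\lambda_d\}$, where $\alpha_1>0$.
\end{itemize}
So no pointwise inequality on the chamber can yield your conclusion $\phi(\kappa(\gamma))\ge c'\log\frac{\sigma_1}{\sigma_2}(\gamma)$.

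\textbf{How to repair it.} The missing input is that the Anosov condition controls \emph{both} extreme roots. Positivity on $\mathfrak{a}^+$ means exactly that $\phi$ is a nonzero element of the dual of the cone above. Equivalently, $\phi=p\,\beta_1+q\,\beta_{d-1}$ with $p,q\ge 0$ not both zero, where $\beta_1:=(d-1)\omega_1-\omega_{d-1}$ and $\beta_{d-1}:=(d-1)\omega_{d-1}-\omega_1$. On $\overline{\mathfrak{a}^+}$ these satisfy
$\beta_1=\sum_{j=2}^{d-1}(\lambda_1-\lambda_j)\ge (d-2)\alpha_1$ and $\beta_{d-1}=\sum_{j=2}^{d-1}(\lambda_j-\lambda_d)\ge (d-2)\alpha_{d-1}$.
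Since $\alpha_{d-1}(\kappa(\gamma))=\alpha_1(\kappa(\gamma^{-1}))$ and word length is symmetric, Theorem~\ref{theoremofcriterionofsingularvaluegaps} gives $\phi(\kappa(\gamma))\ge c\,d_{word}(\mathrm{id},\gamma)-C$ with $c>0$. For $d=2$ your original argument is fine. You need this linear bound again in (2), to make $P(s)=\lim_n\frac1n\log a_n(s)$ strictly decreasing.

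\textbf{The rest.} With this fix, (2) and (3) go through, and (2) is easier than you fear. The bound $a_{n+m}(s)\le e^{sC}a_n(s)a_m(s)$ comes from the injective geodesic splitting $S_{n+m}\to S_n\times S_m$. By Fekete, it already gives $a_n(s)\ge e^{-sC}e^{nP(s)}$ for every $n$. Hence $a_n(\delta^\phi(\Gamma))\ge e^{-C\delta^\phi(\Gamma)}$ at the zero of the convex, strictly decreasing $P$ (for non-elementary $\Gamma$). So neither the lower multiplicity bound nor the thermodynamic fallback is needed. This Fekete--Patterson route is a more elementary alternative to Sambarino's reparametrization argument, which the paper cites without proof. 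It gives existence but not the uniqueness and ergodicity that Sambarino's approach also provides.
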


We have the shadow lemma for Patterson-Sullivan measures.
\begin{theorem}[{Canary--Zhang--Zimmer~\cite[Proposition~7.1]{CaZhZi3} and Sambarino~\cite[Lemma~5.7.1]{sambarinopattersonsullivan}}]\label{shadowlemma}
    Let $\Gamma_0$ be a regular convex cocompact group with respect to $(V,\Omega,\mathcal{C})$, and let $\rho:\Gamma_0\to \mathsf{PGL}(d,\mathbb{R})$ be a projective Anosov representation with limit map $\xi$.
    With the notation of Theorem~\ref{theoremofexistenceofpattersonsullivan}, fix such a functional $\phi$ and let $\mu$ be the corresponding $\phi$-Patterson--Sullivan measure. For any fixed $b_0\in \mathcal{C}$ and sufficiently large $R>0$, there exists a constant $C>1$ such that for any $\gamma\in \Gamma_0$,
    $$
    \frac{1}{C} e^{-\delta^{\phi}(\Gamma) \phi(\kappa(\rho(\gamma)))}\le \mu\big(\xi(\tilde{\mathcal{O}}_R(b_0,\gamma b_0))\big)\le C e^{-\delta^{\phi}(\Gamma) \phi(\kappa(\rho(\gamma)))}.
    $$
\end{theorem}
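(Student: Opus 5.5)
The natural route is the classical Sullivan argument, transplanted to the convex-cocompact model $(V,\Omega,\mathcal{C})$. We work with the $\phi$-Patterson--Sullivan measure $\mu$ from Theorem~\ref{theoremofexistenceofpattersonsullivan}, abbreviate $\delta=\delta^{\phi}(\Gamma)$, and write $T(\gamma) = e^{-\delta\phi(\kappa(\rho(\gamma)))}$ for the target weight of the shadow $\mathcal{O}_R(b_0,\gamma b_0)$. The plan is to prove the two inequalities separately.

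\textbf{Lower bound.} Fix $\gamma\in\Gamma_0$ and set $x=\gamma b_0$, so the task is to bound $\mu(\xi(\tilde{\mathcal{O}}_R(b_0,\gamma b_0)))$ from below.
\begin{enumerate}
\item Pull the set back by $\rho(\gamma)^{-1}$. By equivariance of $\xi$, $\rho(\gamma)^{-1}\xi(\tilde{\mathcal{O}}_R(b_0,\gamma b_0)) = \xi(\tilde{\mathcal{O}}_R(\gamma^{-1}b_0,b_0))$.
\item Apply the transformation rule. This gives
\[
\mu\big(\xi(\tilde{\mathcal{O}}_R(b_0,\gamma b_0))\big)=\int_{\xi(\tilde{\mathcal{O}}_R(\gamma^{-1}b_0,b_0))} e^{-\delta\phi(B_\theta(\rho(\gamma),F))}\,d\mu(F).
\]
\item Compare the cocycle with the Cartan projection. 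For $F$ uniformly transverse to the repelling flag $V_{\theta^{opp}}(\rho(\gamma))$, one has $\|B_\theta(\rho(\gamma),F)-\pi_\theta\kappa(\rho(\gamma))\|\le C'$. This is the standard estimate $\|gv\|\asymp\sigma_1(g)\|v\|$ for $v$ at angle bounded below from the repelling hyperplane, applied to each relevant exterior power, i.e.\ to $\omega_1$ and $\omega_{d-1}$. Since $\phi\in\mathfrak{a}_\theta^*$, $\phi\circ\pi_\theta=\phi$, so the integrand is comparable to $T(\gamma)$.
\end{enumerate}

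\textbf{Upper bound.} Cover the part of the shadow that is far from $\xi(y)$, where $y=\lim\gamma^{-1}b_0$, and use $\mu\le 1$.

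\textbf{Main obstacle: the uniform transversality in step 3 and the uniform mass bound.} Both must hold for all $\gamma$ at once, not just along sequences.
\begin{enumerate}
\item Argue by contradiction with sequences, as in the proof of Lemma~\ref{lemmaofellipseandshadows}. Suppose $\gamma_n$ has $\gamma_n^{-1}b_0\to y$. The shadows $\hat{\mathcal{O}}_R(\gamma_n^{-1}b_0,b_0)$ accumulate on $\hat{\mathcal{O}}_R(y,b_0)$, which is a compact set avoiding $y$. Meanwhile $V_{d-1}(\rho(\gamma_n))\to\xi^{d-1}(H_y)$ by the Cartan property (Lemma~\ref{lemmaofcartanproperty} applied to $\gamma_n^{-1}$). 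Transversality of distinct limit points then gives a uniform angle bound.
\item For the lower bound we additionally need $\mu(\xi(\tilde{\mathcal{O}}_R(\gamma^{-1}b_0,b_0)))\ge c>0$ uniformly in $\gamma$. This is exactly where $R$ must be large. First, $\mu$ has no atoms and full support on $\Lambda^{\theta}(\Gamma)$; this follows from minimality of the $\Gamma$-action on the limit set together with quasi-invariance. Second, for $R$ larger than a constant depending on $\operatorname{diam}(\mathcal{C}/\Gamma_0)$, the shadow seen from any point $z\in\mathcal{C}\cup\partial\mathcal{C}$ contains all of $\Lambda^1(\Gamma_0)$ except a small neighbourhood of the viewpoint.
\item To make the small neighbourhood precise, I would argue by contradiction and compactness. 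If the mass of the shadows $\tilde{\mathcal{O}}_R(\gamma_n^{-1}b_0,b_0)$ tended to $0$, then, passing to the limit in the space of viewpoints, the mass of the limit shadow $\tilde{\mathcal{O}}_R(y,b_0)$ would be $0$. That contradicts full support once $R$ is large enough that this limit shadow has nonempty interior in $\Lambda^1(\Gamma_0)$.
\end{enumerate}
Getting this uniformity cleanly, including the case of bounded $\gamma$, which is handled by finiteness, is where I expect most of the care to go. The rest is bookkeeping with constants from Lemma~\ref{lemmaofuniformcontinuity}.
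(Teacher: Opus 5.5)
Your proposal is correct. It is the classical Sullivan argument: pull the shadow back by $\rho(\gamma)^{-1}$, compare $B_\theta(\rho(\gamma),\cdot)$ with $\pi_\theta\kappa(\rho(\gamma))$ on a set uniformly transverse to the repelling flag, and bound the mass of the pulled-back shadow above by $1$ and below by a uniform constant once $R$ is large. This is the proof in the works the paper cites; the paper itself gives no argument beyond the citation.

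Two small points. The limiting step in the lower bound is cleanest via Fatou's lemma against the limit shadow $\xi(\tilde{\mathcal{O}}_{R/2}(y,b_0))$ of half the radius, since each of its points lies in $\xi(\tilde{\mathcal{O}}_{R}(\gamma_n^{-1}b_0,b_0))$ for all large $n$. Also, the no-atoms claim is unnecessary: full support, which is what minimality plus quasi-invariance gives, is all you use.
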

\begin{remark}\label{remarkofpushforwardshadowlemma}
By considering the pushforward of $\mu$ from $\Lambda^{1,d-1}(\Gamma)$ to $\Lambda^1(\Gamma)$ under the natural projection (which is denoted $\mu^1$), and letting $\xi^{1}: \Lambda^1(\Gamma_0)\to \Lambda^1(\Gamma)$ be the induced equivariant map, the inequality can be equivalently written as:
    $$
    \frac{1}{C} e^{-\delta^{\phi}(\Gamma) \phi(\kappa(\rho(\gamma)))}\le \mu^1\big(\xi^1(\hat{\mathcal{O}}_R(b_0,\gamma b_0))\big)\le C e^{-\delta^{\phi}(\Gamma) \phi(\kappa(\rho(\gamma)))}.
    $$
\end{remark}

\vspace{3mm}

For later use, we record some necessary lemmas. Under the conventions of Theorem~\ref{theoremofexistenceofpattersonsullivan} and Lemma~\ref{shadowlemma}, for any linear form $\phi$ that evaluates positively on $\mathfrak{a}^{+}$ and satisfies $\delta^{\phi}(\rho(\Gamma_0))<\infty$, we define $\mathcal{A}_n(\phi)$ as
\[
\mathcal{A}_n(\phi) := \left\{ \gamma\in \Gamma_0 \;\middle|\; e^{-\phi(\kappa(\rho(\gamma)))} \in (e^{-(n+1)}, e^{-n}] \right\},
\]
which is a finite set for each $n\ge 0$.

We recall the following separation lemma:

\begin{lemma}[{Canary--Zhang--Zimmer~\cite[Lemma~8.5]{CaZhZi}}]\label{lemmaofdisjointshadows}
Fix $b_0\in \mathcal{C}$. For any $R>0$, there exists a constant $C_0 = C_0(R)>0$ such that if $n\ge 0$, $\gamma_1,\gamma_2\in \mathcal{A}_n(\alpha_1)$, and $d_{\Omega}(\gamma_1 b_0,\gamma_2 b_0) > C_0$, then 
\[
\mathcal{O}_R(b_0,\gamma_1 b_0)\cap \mathcal{O}_R(b_0,\gamma_2 b_0) = \emptyset.
\]
\end{lemma}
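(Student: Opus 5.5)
We argue by contradiction with a compactness argument that uses only the Morse-type geometry of $(\mathcal{C},d_\Omega)$ and the equivariance of the Cartan projection. Fix $R>0$ and suppose no $C_0$ works. Then for each $k$ there are $n_k\ge 0$, $\gamma_1^k,\gamma_2^k\in\mathcal{A}_{n_k}(\alpha_1)$ with $d_\Omega(\gamma_1^k b_0,\gamma_2^k b_0)>k$, and a point $x_k\in\mathcal{O}_R(b_0,\gamma_1^kb_0)\cap\mathcal{O}_R(b_0,\gamma_2^kb_0)$.

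\textbf{Step 1: both orbit points lie near one geodesic ray.} The ray $[b_0,x_k)$ passes within $R$ of both $\gamma_1^kb_0$ and $\gamma_2^kb_0$. Order them so that the point $p_1^k$ on the ray near $\gamma_1^kb_0$ comes before the point $p_2^k$ near $\gamma_2^kb_0$. Then
\[
d_\Omega(b_0,\gamma_2^kb_0)\ge d_\Omega(b_0,\gamma_1^kb_0)+d_\Omega(\gamma_1^kb_0,\gamma_2^kb_0)-4R .
\]
Setting $h_k=(\gamma_1^k)^{-1}\gamma_2^k$, this gives $d_\Omega(b_0,h_kb_0)\ge k-2R\to\infty$. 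A point $x_k$ need not lie in $\overline{\mathcal{C}}$. If this matters, I would replace $x_k$ by a limit point using the standard fact that shadows in $\partial\Omega$ of points in $\mathcal{C}$, with slightly larger radius, are controlled by limit-set shadows. Alternatively, I would use that geodesics in $\Omega$ between points of $\mathcal{C}$ are uniformly close to $\mathcal{C}$ (the $\delta$-hyperbolicity of $\mathcal{C}$ and the convexity of $\mathcal{C}$). Either way, $\gamma_1^k$ lies uniformly close to a word geodesic from $\mathrm{id}$ to $\gamma_2^k$, so $\gamma_2^k\approx\gamma_1^k h_k$ is a "geodesic concatenation".

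\textbf{Step 2: additivity of $\alpha_1$ along geodesics.} This step is the main obstacle. For a projective Anosov representation, when $\gamma_1$ lies within bounded distance of a geodesic from $\mathrm{id}$ to $\gamma_1h$, I need
\[
\alpha_1(\kappa(\rho(\gamma_1h)))\ge\alpha_1(\kappa(\rho(\gamma_1)))+\alpha_1(\kappa(\rho(h)))-D
\]
for a uniform constant $D$. I would prove it via the Cartan property and strong dynamics preservation in Lemma~\ref{lemmaofcartanproperty}, by contradiction and compactness. After translating by $\gamma_1^{-1}$, the attracting hyperplane data $U_{d-1}(\rho(\gamma_1^{-1}))$ and the attracting line $U_1(\rho(h))$ converge to $\xi^{d-1}$ and $\xi^1$ of two distinct limit points. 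Distinctness follows from the geodesic configuration: $\gamma_1^{-1}b_0$ and $h b_0$ lie on opposite sides of $b_0$ along a quasi-geodesic. By transversality, these directions are uniformly transverse. The standard estimate $\sigma_1(gh)\ge c\,\sigma_1(g)\sigma_1(h)$ under a uniform transversality of $V_{d-1}(g)$ and $U_1(h)$ then applies. It must be combined with the upper bound $\sigma_2(gh)\le\sigma_2(g)\sigma_1(h)$, or with the same argument applied to $\wedge^2$, which yields $\sigma_1\sigma_2(gh)\le\sigma_1\sigma_2(g)\,\sigma_1\sigma_2(h)$. Together these give the displayed inequality.

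\textbf{Step 3: contradiction.} Since $\Gamma_0$ is projective Anosov, Theorem~\ref{theoremofcriterionofsingularvaluegaps} together with the quasi-isometry $\gamma\mapsto\gamma b_0$ gives
\[
\alpha_1(\kappa(\rho(h_k)))\ge d_\Omega(b_0,h_kb_0)/a'-A'\to\infty .
\]
By Step 2,
\[
\alpha_1(\kappa(\rho(\gamma_2^k)))\ge\alpha_1(\kappa(\rho(\gamma_1^k)))+\alpha_1(\kappa(\rho(h_k)))-D .
\]
Since $\gamma_1^k,\gamma_2^k\in\mathcal{A}_{n_k}(\alpha_1)$, their $\alpha_1$-values differ by less than $1$. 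So $\alpha_1(\kappa(\rho(h_k)))<1+D$, a contradiction. The constant $C_0(R)$ produced this way depends only on $R$, the Anosov constants, $D$, and $\delta$.
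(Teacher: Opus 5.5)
The paper gives no proof of this lemma and only cites Canary--Zhang--Zimmer; your argument is correct and is the standard one behind that citation. Intersecting shadows put $\gamma_1b_0$ (after relabeling) within bounded distance of $[b_0,\gamma_2b_0]$, and coarse additivity of $\alpha_1$ in that configuration (their Lemma~6.6, which the paper itself invokes in the appendix) then bounds $\alpha_1(\kappa(\rho(\gamma_1^{-1}\gamma_2)))$, hence $d_\Omega(\gamma_1b_0,\gamma_2b_0)$, whenever $\gamma_1,\gamma_2\in\mathcal{A}_n(\alpha_1)$.

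One correction in Step~2: the bound $\sigma_2(gh)\le\sigma_2(g)\sigma_1(h)$ only yields $\alpha_1(\kappa(gh))\ge\alpha_1(\kappa(g))-D$, which is useless in Step~3. You must use the other option, plain submultiplicativity of the norm on $\wedge^2$ (no transversality needed). Together with $\sigma_1(gh)\ge c\,\sigma_1(g)\sigma_1(h)$, this gives $\sigma_2(gh)\le c^{-1}\sigma_2(g)\sigma_2(h)$, and hence your displayed inequality with $D=2\log(1/c)$.
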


We also recall the following divergence lemma:

\begin{lemma}[{Canary--Zhang--Zimmer~\cite[Lemma~8.7]{CaZhZi}}]\label{lemmaofdivergence}
For any $0<\delta<\delta^{\phi}(\rho(\Gamma_0))$, we have
\[
\limsup_{n\to \infty} \sum_{\gamma\in \mathcal{A}_n(\phi)} e^{-\delta\phi(\kappa(\rho(\gamma)))} = \infty.
\]
\end{lemma}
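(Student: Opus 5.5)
The plan is to argue by contradiction, comparing the partial sums over the shells $\mathcal{A}_n(\phi)$ with the full Poincar\'e series $\mathcal{Q}^{\phi}_{\rho(\Gamma_0)}(s)$ at an exponent $s$ strictly between $\delta$ and $\delta^{\phi}(\rho(\Gamma_0))$.

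First I will check that the shells $\mathcal{A}_n(\phi)$, $n\ge 0$, partition $\Gamma_0$. Since $\kappa$ takes values in $\overline{\mathfrak{a}^+}$ and $\phi$ is positive on $\mathfrak{a}^+$, continuity gives $\phi(\kappa(\rho(\gamma)))\ge 0$ for every $\gamma$. Hence $e^{-\phi(\kappa(\rho(\gamma)))}\in(0,1]$, and this number lies in exactly one interval $(e^{-(n+1)},e^{-n}]$. Because $\rho$ is faithful, summing over $\Gamma_0$ is the same as summing over $\rho(\Gamma_0)$, so
\[
\mathcal{Q}^{\phi}_{\rho(\Gamma_0)}(s)=\sum_{n\ge 0}\sum_{\gamma\in\mathcal{A}_n(\phi)} e^{-s\phi(\kappa(\rho(\gamma)))}.
\]

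Next, suppose for contradiction that the $\limsup$ is finite. Each $\mathcal{A}_n(\phi)$ is finite, so each term of the sequence is finite, and therefore there is $M>0$ with
\[
\sum_{\gamma\in\mathcal{A}_n(\phi)} e^{-\delta\phi(\kappa(\rho(\gamma)))}\le M \quad\text{for all } n.
\]
On $\mathcal{A}_n(\phi)$ we have $\phi(\kappa(\rho(\gamma)))<n+1$, so each summand is at least $e^{-\delta(n+1)}$. This yields the counting bound $|\mathcal{A}_n(\phi)|\le M e^{\delta(n+1)}$.

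Now fix $s$ with $\delta<s<\delta^{\phi}(\rho(\Gamma_0))$. On $\mathcal{A}_n(\phi)$ we have $\phi(\kappa(\rho(\gamma)))\ge n$, so
\[
\mathcal{Q}^{\phi}_{\rho(\Gamma_0)}(s)\le \sum_{n\ge0}|\mathcal{A}_n(\phi)|e^{-sn}\le Me^{\delta}\sum_{n\ge 0}e^{(\delta-s)n}<\infty .
\]
The series is monotone non-increasing in $s$, and $\delta^{\phi}$ is defined as the infimum of the exponents at which it converges. Convergence at $s$ would therefore force $\delta^{\phi}(\rho(\Gamma_0))\le s$, contradicting the choice of $s$.

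No step here is a real obstacle; the argument is elementary. The only points needing care are the nonnegativity of $\phi\circ\kappa$, which ensures the shells exhaust $\Gamma_0$ (with the elements where $\phi\circ\kappa=0$ falling into $\mathcal{A}_0(\phi)$), and the passage from a finite $\limsup$ to a uniform bound $M$, which uses the finiteness of each shell.
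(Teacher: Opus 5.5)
Your proof is correct and complete, and it is essentially the standard argument. The paper gives no proof of its own; it quotes the lemma from Canary--Zhang--Zimmer. Your route is: a uniform bound $M$ on the shell sums gives $|\mathcal{A}_n(\phi)|\le Me^{\delta(n+1)}$, which forces $\mathcal{Q}^{\phi}_{\rho(\Gamma_0)}(s)<\infty$ for some $s\in(\delta,\delta^{\phi}(\rho(\Gamma_0)))$, a contradiction. The auxiliary points are handled properly: $\phi\circ\kappa\ge 0$ on $\overline{\mathfrak{a}^+}$, so the shells partition $\Gamma_0$, and each shell is finite, which follows from the standing assumption $\delta^{\phi}(\rho(\Gamma_0))<\infty$.
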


\subsection{The Bowen--Margulis--Sullivan Measure}\label{gromovandbms}

To address the fact that the Patterson--Sullivan measure is only quasi-invariant under the action of $\Gamma$, we need to construct invariant measures. The fundamental tool required for this construction is the Gromov product, which we introduce below.

Fix a symmetric index set $\theta \subset \{1,\dots,d-1\}$. Consider the open subset $\mathcal{TF}_{\theta} \subset \mathcal{F}_{\theta} \times \mathcal{F}_{\theta}$ formed by all transverse pairs of flags. We define a continuous function
$$
[-,-]_\theta : \mathcal{TF}_{\theta} \to \mathfrak{a}_\theta,
$$
referred to as the \emph{Gromov product}. This map is determined by the requirements:
$$
\omega_{s}([F_1,F_2]_\theta) = \log \frac{\|\bigwedge_{i=1}^{s} e_{1,i} \wedge \bigwedge_{j=1}^{d-s} e_{2,j}\|}{\|\bigwedge_{i=1}^{s} e_{1,i}\|\, \|\bigwedge_{j=1}^{d-s} e_{2,j}\|}, \qquad s \in \theta,
$$
where $\{e_{1,i}\}_{i=1}^{s}$ serves as a basis for $F_{1}^{(s)}$ and $\{e_{2,j}\}_{j=1}^{d-s}$ as a basis for $F_{2}^{(d-s)}$. Here, $\|\cdot\|$ represents the standard exterior power norm induced by the Euclidean inner product. This evaluation is independent of the specific choice of basis vectors $e_{1,i}$ and $e_{2,j}$. Additionally, the inequality $\omega_{s}([F_1,F_2]_\theta) \le 0$ holds universally.

A direct calculation shows that the Gromov product obeys the following cocycle relation:
$$
[g F_1, g F_2]_\theta - [F_1, F_2]_\theta = - B_\theta(g, F_1) - i|_{\mathfrak{a}_\theta} \circ B_\theta(g, F_2),
$$
which is valid for every $g \in \mathsf{PGL}(d,\mathbb{R})$ and any pair $(F_1, F_2) \in \mathcal{TF}_\theta$. In this identity, $i$ stands for the opposition involution acting on $\mathfrak{a}$, explicitly given by
$$
i\left(\mathrm{diag}(\lambda_1, \lambda_2, \dots, \lambda_d)\right) = \mathrm{diag}(-\lambda_d, -\lambda_{d-1}, \dots, -\lambda_1).
$$
Because $\theta$ is symmetric, the involution $i$ naturally restricts to the subspace $\mathfrak{a}_\theta$.

We now proceed to construct the invariant measures linked to a projective Anosov subgroup $\Gamma \subset \mathsf{PGL}(d,\mathbb{R})$. For the remaining of this Section, we assume that $\theta = \{1,d-1\}$. Let $\phi$ and $\mu$ represent the linear functional and the Patterson--Sullivan measure introduced in Definition~\ref{theoremofexistenceofpattersonsullivan}. We define $\bar{\phi} := \phi\circ i$, accompanied by its corresponding measure $\bar\mu$.

Recall the standard Radon--Nikodym transformation rule for $\mu$, which reads
$$
\frac{d(\gamma_* \mu)}{d\mu}(\xi) = e^{- \delta^\phi(\Gamma) \phi\left(B_\theta(\gamma^{-1}, \xi)\right)}.
$$

Define $\tilde{\mathcal{G}} := \Lambda^\theta(\Gamma)^{(2)}$ as the collection of distinct point pairs in the limit set. We refer to $\tilde{\mathcal{G}} \times \mathbb{R}$ as the \emph{Gromov flow space}. This product space admits an action by $\Gamma \times \mathbb{R}$, where $\Gamma$ acts diagonally on the $\tilde{\mathcal{G}}$ component (acting trivially on $\mathbb{R}$), while $\mathbb{R}$ acts by translation on the second coordinate.

We equip $\tilde{\mathcal{G}} \times \mathbb{R}$ with a measure $\tilde{m}_\phi$ prescribed by
$$
d\tilde{m}_\phi(\xi, \eta, t) := e^{-\delta^\phi(\Gamma) \phi([\xi, \eta]_\theta)} \, d\mu(\xi) \, d\bar\mu(\eta) \, dt.
$$

By combining the Gromov product's cocycle property with the Radon--Nikodym derivative of $\mu$, we see that $\tilde{m}_\phi$ remains invariant under the diagonal $\Gamma$-action. Its invariance under $\mathbb{R}$-translations is also immediate from the definition.

As a result, $\tilde{m}_\phi$ naturally induces a flow-invariant Borel measure $m_\phi$ on the quotient space $(\tilde{\mathcal{G}} \times \mathbb{R}) / \Gamma$. We term $m_\phi$ the \emph{Bowen--Margulis--Sullivan measure} associated with the functional $\phi$.

Suppose there exists a regular convex cocompact subgroup $\Gamma_0$ with respect to $(V,\Omega,\mathcal{C})$ and a projective Anosov representation $\rho$ with limit map $\xi$ such that $\rho(\Gamma_0) = \Gamma$. This data provides an equivariant flow homeomorphism identifying $(\tilde{\mathcal{G}} \times \mathbb{R})$ with the unit tangent bundle $T^1\mathcal{C}$ associated with the Hilbert metric. We let $\tilde{m}_{\phi,\Omega}$ and $m_{\phi,\Omega}$ denote the measures on $T^1\mathcal{C}$ and $T^1\mathcal{C}/\Gamma$ naturally induced by this identification.

Thanks to the transversality property inherent to the limit sets of Anosov subgroups, the Gromov product $[\xi, \eta]_\theta$ is globally well-defined and continuous across $\tilde{\mathcal{G}}$. This continuity guarantees that $m_\phi$ is a locally finite measure.

Finally, we state the corresponding ergodicity results:

\begin{theorem}[{Sambarino~\cite{sambarinopattersonsullivan} and Canary--Zhang--Zimmer~\cite[Theorem~11.1]{CaZhZi3}}]\label{ergodicities}
Let $\theta = \{1,d-1\}$. Assume $\Gamma \subset \mathsf{PGL}(d,\mathbb{R})$ is a projective Anosov subgroup, and consider a $\phi$-Patterson--Sullivan measure $\mu$ on $\Lambda^\theta(\Gamma)$ (with $\bar\phi$ and $\bar\mu$ defined analogously) as described in Theorem~\ref{theoremofexistenceofpattersonsullivan}. We then have the following:
\begin{enumerate}
    \item The diagonal $\Gamma$-action on $\Lambda^\theta(\Gamma)^{(2)}$ is ergodic relative to the measure class of $\mu \times \bar{\mu}$. As a corollary, the $\Gamma$-action on $\Lambda^\theta(\Gamma)$ is ergodic with respect to $\mu$.
    \item The translation flow on the quotient space $(\tilde{\mathcal{G}} \times \mathbb{R}) / \Gamma$ is ergodic with respect to the Bowen--Margulis--Sullivan measure $m_\phi$ (and correspondingly, the geodesic flow on $\mathcal{C}/\Gamma_0$ is ergodic with respect to $m_{\phi, \Omega}$).
\end{enumerate}
\end{theorem}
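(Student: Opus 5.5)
The plan is to prove (2) first by a Hopf argument, and then deduce (1) from it.

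\textbf{Finiteness.} The first step is to show that $m_\phi$ is a finite measure on $(\tilde{\mathcal{G}}\times\mathbb{R})/\Gamma$.
\begin{enumerate}
\item Using the identification of $\tilde{\mathcal{G}}\times\mathbb{R}$ with $T^1\mathcal{C}$ (Hilbert metric), a fundamental domain for the $\Gamma$-action is relatively compact, because $\mathcal{C}/\Gamma_0$ is compact.
\item By transversality, the density $e^{-\delta^\phi(\Gamma)\phi([\xi,\eta]_\theta)}$ is continuous on $\tilde{\mathcal{G}}$, and $\mu$, $\bar\mu$ are probability measures.
\item Hence $m_\phi$ is finite, and after normalization it is a flow-invariant probability measure.
\end{enumerate}
By Poincar\'e recurrence the flow is then conservative. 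Equivalently, $\mu$-almost every point is conical; for Anosov groups every limit point is conical anyway. I would also check that $\mu$ has no atoms. This follows from the shadow lemma (Theorem~\ref{shadowlemma}) together with Lemma~\ref{lemmaofquasigeodesics}, since the shadows along a quasi-geodesic ray to a point $x$ have measure decaying exponentially.

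\textbf{Hopf argument.} Fix a compactly supported continuous function $f$ on the quotient.
\begin{enumerate}
\item By Birkhoff's theorem, the forward average $f^+$ and the backward average $f^-$ exist and agree $m_\phi$-almost everywhere.
\item I would show that $f^+(\xi,\eta,t)$ depends only on the forward endpoint $\xi$.
\begin{itemize}
\item Two flow lines $(\xi,\eta,t)$ and $(\xi,\eta',t')$ with the same endpoint $\xi$ become asymptotic after a time shift.
\item The time shift is computed from $\phi\circ B_\theta$, via the Busemann-type normalization implicit in the Gromov product cocycle identity.
\item Uniform continuity of $f$ then makes the two Birkhoff averages coincide.
\end{itemize}
\item Symmetrically, $f^-$ depends only on $\eta$.
\item The measure $\tilde m_\phi$ is locally equivalent to $\mu\otimes\bar\mu\otimes dt$. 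By Fubini, a function that is almost everywhere a function of $\xi$ alone and also of $\eta$ alone is almost everywhere constant.
\item Hence every invariant function is constant, which gives (2).
\end{enumerate}

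\textbf{Deducing (1).} A $\Gamma$-invariant Borel set $E\subset\Lambda^\theta(\Gamma)^{(2)}$ gives a flow-invariant set $(E\times\mathbb{R})/\Gamma$. By (2) this set is null or conull, so $E$ is $\mu\times\bar\mu$-null or conull. For $\Gamma$-invariant $A\subset\Lambda^\theta(\Gamma)$, apply this to $(A\times\Lambda^\theta(\Gamma))\cap\tilde{\mathcal{G}}$. Because $\mu$ has no atoms, the diagonal is $\mu\times\bar\mu$-null, so the statement passes to $A$.

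\textbf{Main obstacle.} The hard step is the asymptoticity of flow lines sharing a forward endpoint, in a uniform sense. The Gromov flow space is only H\"older conjugate to a reparametrization of the geodesic flow on $T^1\mathcal{C}/\Gamma_0$. So I would first establish a metric-Anosov, or at least contracting, structure on stable leaves, following Sambarino's reparametrization theorem. The strongly dynamics preserving property (Lemma~\ref{lemmaofcartanproperty}) would give the required contraction along stable leaves.
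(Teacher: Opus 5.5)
The paper does not prove this statement; it quotes it from Sambarino and Canary--Zhang--Zimmer, so there is no internal argument to match. Your outline is a correct, self-contained route, and it differs from the cited ones.

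Your argument runs as follows:
\begin{enumerate}
\item Cocompactness of the $\Gamma$-action on $\tilde{\mathcal{G}}\times\mathbb{R}$ gives finiteness of $m_\phi$, hence conservativity for free.
\item The Hopf argument gives (2): forward and backward Birkhoff averages are constant along weak stable and weak unstable leaves, and Fubini applies to the density $e^{-\delta^{\phi}(\Gamma)\phi([\xi,\eta]_\theta)}$ against $\mu\otimes\bar\mu\otimes dt$.
\item Lifting a $\Gamma$-invariant $E\subset\Lambda^\theta(\Gamma)^{(2)}$ to $E\times\mathbb{R}$, together with non-atomicity of $\mu$, gives (1).
\end{enumerate}

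In Sambarino's framework, the quotient flow is instead a H\"older reparametrization of a transitive metric Anosov flow with a Markov coding. There $m_\phi$ is its measure of maximal entropy, so ergodicity, and in fact uniqueness and mixing, comes from thermodynamic formalism. Canary--Zhang--Zimmer obtain the result inside a Hopf--Tsuji--Sullivan-type dichotomy for general transverse groups. In that setting there is no compactness, so conservativity must be extracted from divergence of the Poincar\'e series at the critical exponent together with the shadow lemma. Your route uses cocompactness to make conservativity trivial. It is the most elementary of the three, but it gives ergodicity only.

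There are two points to tighten.

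\textbf{The action on the $\mathbb{R}$-factor.} You should state it explicitly: it must be twisted by the cocycle $\phi(B_\theta(\gamma,\cdot))$ evaluated at one endpoint, as in Sambarino. With the action that is trivial on $\mathbb{R}$, as literally written in the paper's construction of $m_\phi$, each line $\{(\gamma^+,\gamma^-)\}\times\mathbb{R}$ is fixed pointwise by $\langle\gamma\rangle$. Then neither the equivariant identification with $T^1\mathcal{C}$ nor your finiteness step can hold.

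\textbf{Your ``main obstacle'' is lighter than you think.} It does not need a metric-Anosov structure on the reparametrized flow.
\begin{itemize}
\item The identification with $T^1\mathcal{C}$ makes the translation flow a time change, with continuous positive time cocycle, of the Hilbert geodesic flow on a compact quotient.
\item So a translation-flow Birkhoff average along an orbit is a ratio of two Hilbert-time ergodic sums of continuous functions, and such ratios are insensitive to $o(n)$ errors.
\item For the Hilbert flow, two geodesics with a common endpoint $\xi\in\Lambda^1(\Gamma_0)$ become asymptotic after a time shift simply because $\xi$ is a $C^1$ point of $\partial\Omega$. In an affine chart, the two rays at height $h$ above the supporting hyperplane at $\xi$ are $O(h)$ apart. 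Each lies at distance much larger than $h$ from both endpoints of the chord through them, so their Hilbert distance tends to $0$.
\end{itemize}
This gives $f^+(\xi,\eta,t)=f^+(\xi,\eta',t')$ directly, and symmetrically for $f^-$, since every limit point is $C^1$.
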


\section{Full dimensional limit set}
In this Section we prove the following theorem.

\begin{theorem}\label{theoremoffulldimension}
If $\Gamma\subset \mathsf{PGL}(d,\mathbb{R})$ is a projective Anosov subgroup and the Hausdorff dimension of its limit set $\Lambda^1(\Gamma)$ $\dim_H\Lambda^1(\Gamma)$ equals $d-1$, then $d = 2$ and $\Gamma$ is a cocompact lattice in $\mathsf{PGL}(2,\mathbb{R})$.
\end{theorem}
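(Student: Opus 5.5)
The plan is to compare the volume measure $m_{vol}$ on $\mathbb{RP}^{d-1}$ with a Patterson--Sullivan measure for the functional $\phi := d\,\omega_1$. The aim is to show that $\Lambda^1(\Gamma)$ has positive Lebesgue measure, then that it has interior, and finally to use transversality to force $d=2$. By Selberg's lemma I first pass to a torsion-free finite-index subgroup; this changes neither the limit set nor cocompactness. If $\Gamma$ is reducible I would restrict to the span of $\Lambda^1(\Gamma)$: then $\Lambda^1(\Gamma)$ sits in a proper projective subspace, so $\dim_H\Lambda^1(\Gamma)<d-1$, a contradiction. So I may assume $\Gamma$ is irreducible and, by Theorem~\ref{theoremofrealization}, write $\Gamma=\rho(\Gamma_0)$ with $\Gamma_0$ regular convex cocompact.

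\textbf{Step 1: compute the volume exponent and show $\delta^{\phi}(\Gamma)=1$.} With representatives of determinant $\pm1$, Lemma~\ref{lemmaofmapofellipse} says $\mathcal{E}(\rho(\gamma),C)$ has $m_{vol}$-volume $\asymp C^{d-1}\prod_{j\ge2}\sigma_j/\sigma_1=C^{d-1}\sigma_1(\rho(\gamma))^{-d}=C^{d-1}e^{-\phi(\kappa(\rho(\gamma)))}$. Moreover, by \eqref{equationofcocycleformeasure2} the Radon--Nikodym cocycle of $m_{vol}$ is exactly $e^{-\phi(B_\theta(\gamma^{-1},\cdot))}$, i.e.\ the $\phi$-conformal cocycle with exponent $1$.

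For the bound $\delta^\phi\le 1$: if $\Lambda^1(\Gamma)\ne\mathbb{RP}^{d-1}$, then a small ball $K$ in the domain of discontinuity has $\Gamma$-translates that overlap boundedly, so $\sum_\gamma m_{vol}(\gamma K)<\infty$. By \eqref{equationofcocycleformeasure2}, $m_{vol}(\gamma K)\asymp \sigma_1(\rho(\gamma))^{-d}$ for all but finitely many $\gamma$ (apply Lemma~\ref{lemmaofcartanproperty} to $K$, which is transverse to the repellers). Hence $\delta^\phi\le1$.

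For the bound $\delta^\phi\ge1$: I would cover $\Lambda^1(\Gamma)$, using Lemma~\ref{lemmaofellipseandshadows}(2) and Lemma~\ref{lemmaofshadowvitali}, by ellipses $\mathcal{E}(\rho(\gamma),C)$ with $\gamma\in\mathcal{A}_n(\phi)$. Each ellipse is cut into $\prod_j \sigma_j/\sigma_d$ balls of radius $\sigma_d/\sigma_1$, exactly as in the proof of Theorem~\ref{pswtheomrem}. This bounds the $(d-1)$-dimensional covering sums by $\sum_{\mathcal{A}_n(\phi)}e^{-\phi(\kappa)}$, and from there I would try to show that $\delta^\phi<1$ forces $\dim_H\Lambda^1(\Gamma)<d-1$.

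\textbf{Step 2: absolute continuity.} Let $\mu$ be the $\phi$-Patterson--Sullivan measure (Theorem~\ref{theoremofexistenceofpattersonsullivan}), and $\mu^1$ its projection as in Remark~\ref{remarkofpushforwardshadowlemma}. By that remark, $\mu^1(\xi^1(\hat{\mathcal{O}}_R(b_0,\gamma b_0)))\asymp \sigma_1^{-d}$. By Lemma~\ref{lemmaofellipseandshadows}(2), these shadows lie in ellipses of $m_{vol}$-measure $\asymp\sigma_1^{-d}$. By Lemma~\ref{lemmaofellipseandshadows}(1), the enlarged ellipses meeting $\Lambda^1$ are in turn contained in shadows of radius $5R$.

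Every point is conical (Lemma~\ref{lemmaofquasigeodesics}). For an open set $U$ and any $\varepsilon>0$, I would cover $\Lambda^1\cap U$ by small shadows and extract a disjoint subfamily with Lemma~\ref{lemmaofshadowvitali}. This yields $\mu^1(U)\lesssim m_{vol}(U_\varepsilon)$, hence $\mu^1\ll m_{vol}$, and in particular $m_{vol}(\Lambda^1(\Gamma))>0$.

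\textbf{Step 3: interior and conclusion.} Pick a Lebesgue density point $x\in\Lambda^1(\Gamma)$, with $\gamma_n$ along a quasi-geodesic toward $x$. Then $\rho(\gamma_n)^{-1}$ maps the ellipses $\mathcal{E}(\rho(\gamma_n),C)$ around $x$ to balls $\mathcal{B}(V_{d-1}(\rho(\gamma_n))^\perp,C)$ of fixed size, by Lemma~\ref{lemmaofmapofellipse}. The Jacobian distortion of this map on the ellipse is bounded, because $\|g v\|\asymp\sigma_1$ there. The density of $\Lambda^1$ in the ellipses tends to $1$, since the ellipses are comparable to the shadows that shrink to $x$.

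Passing to a limit as $V_{d-1}\to\xi^{d-1}(H_y)$, and using that $\Lambda^1$ is closed and $\Gamma$-invariant, I get that $\Lambda^1(\Gamma)$ contains a full-measure subset of a fixed ball, hence the whole ball. So $\Lambda^1(\Gamma)$ has interior.

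If $d\ge3$, take an interior point $x$. The hyperplane $\xi^{d-1}(H_x)$ contains $x$, so it contains points $y\ne x$ of the open set, i.e.\ of $\Lambda^1(\Gamma)$. This contradicts the transversality in Definition~\ref{definitionofanosov}. Thus $d=2$, and minimality gives $\Lambda^1(\Gamma)=\mathbb{RP}^1$. A convex cocompact (projective Anosov) subgroup of $\mathsf{PGL}(2,\mathbb{R})$ with full limit set is a cocompact lattice.

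The main obstacle is Step 1: proving $\delta^\phi\ge1$ from $\dim_H=d-1$ alone, since Theorem~\ref{pswtheomrem} only gives $\alpha(\Gamma)\ge d-1$, which controls divergence for $s<d-1$ and not at $s=d-1$ itself. The first thing I would try is to run the $\mathcal{A}_n$-level covering with the perturbed functional $\phi_s=(d-1-s)\alpha_{d-1}$-type corrections, together with Lemma~\ref{lemmaofdivergence}, to obtain a strict dimension gap whenever $\delta^\phi<1$.
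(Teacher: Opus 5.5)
\textbf{The genuine gap is in Step 3.} At a Lebesgue density point you only know that $\Lambda^1(\Gamma)$ has density tending to $1$ in round balls (or sets of bounded eccentricity) shrinking to that point. For $d\ge3$ the ellipses $\mathcal{E}(\rho(\gamma_n),C)$ are not of this kind: their axis ratio $\frac{\sigma_2}{\sigma_d}(\rho(\gamma_n))$ tends to $\infty$, and their intermediate axes $u(\rho(\gamma_n))[e_i]$ need not converge. The density theorem says nothing about such families. Comparability with shadows does not help either, since the shadow lemma controls $\mu^1$, which does not see $\mathbb{RP}^{d-1}\setminus\Lambda^1(\Gamma)$. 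In fact, after pulling back by $\rho(\gamma_n)^{-1}$, your claim is equivalent to $m_{vol}\big(\mathcal{B}(V_{d-1}(\rho(\gamma_n))^{\perp},C)\setminus\Lambda^1(\Gamma)\big)\to0$. That is essentially the interior statement you are trying to prove, so the step is circular.

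The paper's fix (Proposition~\ref{propositionofcontainingaline}) is to use the round ball $\mathcal{B}\big(p,C_0\frac{\sigma_d}{\sigma_1}(\rho(\gamma_n))\big)$ centred at the density point. Its radius is the \emph{smallest} semi-axis, so it lies in $\mathcal{E}(\rho(\gamma_n),3C_0)$, where the Jacobian of $\rho(\gamma_n)^{-1}$ has bounded distortion, and Lebesgue density legitimately applies to it. Its pull-back has one axis comparable to $C_0$ and all others tending to $0$. Hence one only obtains that a projective line $\mathrm{Span}(\xi^1(x),\xi^1(y))$ lies in $\Lambda^1(\Gamma)$, not a ball.

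Your transversality endgame is correct once interior is known, and it is a genuine simplification. It already works from that single line. If $d\ge3$ and $\ell\subset\Lambda^1(\Gamma)$ is a projective line, then for each $w$ the hyperplane $\xi^{d-1}(H_w)$ meets $\ell$ in a point of $\Lambda^1(\Gamma)$. Transversality forces that point to be $\xi^1(w)$, so $\Lambda^1(\Gamma)=\ell$, contradicting $\dim_H\Lambda^1(\Gamma)=d-1\ge2$. This bypasses the paper's convexity, interior and Kapovich--Benakli steps.

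\textbf{Step 1.} What you call the main obstacle is not one. Since $\delta^\phi$ is an infimum, $\delta^\phi\ge1$ only needs divergence of $\mathcal{Q}^{\phi}_\Gamma(t)$ for $t<1$. That is exactly what $\alpha(\Gamma)\ge d-1$ (Theorem~\ref{pswtheomrem}) gives. For $s\in[d-2,d-1)$ one has $\Phi^{\mathrm{Aff}}_\Gamma(s)=\sum_\gamma\sigma_1(\gamma)^{-d}\big(\frac{\sigma_1}{\sigma_d}(\gamma)\big)^{d-1-s}$. The bound $\sigma_d\ge\sigma_1^{-(d-1)}$ gives $\frac{\sigma_1}{\sigma_d}\le\sigma_1^{d}$, hence $\Phi^{\mathrm{Aff}}_\Gamma(s)\le\mathcal{Q}^{\phi}_\Gamma(s-d+2)$, and the right-hand side must therefore diverge for every $s<d-1$. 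This is the comparison the paper uses in Proposition~\ref{propositionofwhenupperboundisattained}.

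Your argument for $\delta^\phi\le1$ is not valid as stated. $\Lambda^1(\Gamma)\neq\mathbb{RP}^{d-1}$ does not produce an open set with boundedly overlapping translates, because the complement of $\Lambda^1(\Gamma)$ need not be a domain of proper discontinuity. For a cocompact lattice in $\mathsf{SO}(2,1)<\mathsf{PGL}(3,\mathbb{R})$, every open set outside the conic contains fixed points of infinite-order elements. The bound is also unnecessary: Step 2 only uses $\delta^\phi\ge1$, and if $\delta^\phi>1$ the same Vitali argument forces $\mu^1=0$. That is how the paper proves $\alpha(\Gamma)\le d-1$ (Proposition~\ref{propositionofexponentupperbound}).

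Step 2 itself is the paper's argument. Note, though, that Lemma~\ref{lemmaofellipseandshadows}(1) only controls $\mathcal{E}(\rho(\gamma),C)\cap\Lambda^1(\Gamma)$. So the disjointness of the regions of $\mathbb{RP}^{d-1}$ whose volumes you add must be checked separately; it does not follow automatically from disjointness of shadows.

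Finally, your reduction to the irreducible case is wrong as stated. A reducible projective Anosov group can have $\Lambda^1(\Gamma)$ spanning $\mathbb{R}^d$, with the invariant subspace missing $\Lambda^1(\Gamma)$. The paper, for its part, simply works under the hypotheses of Theorem~\ref{theoremofrealization}.
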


We outline the proof of this theorem. First, we show that if $\dim_H \Lambda^1(\Gamma) = d-1$, then the affinity exponent $\alpha(\Gamma) = d-1$. Next, we show that this implies $\Lambda^1(\Gamma)$ has positive measure with respect to $m_{vol}$. Finally, we apply the Lebesgue density theorem to show that the limit set is convex, which concludes the proof.

Throughout this section, we assume that $\Gamma = \rho(\Gamma_0)$, where $\Gamma_0$ is a regular convex cocompact subgroup with respect to $(V,\Omega, \mathcal{C})$ as described in Theorem~\ref{theoremofrealization}, and we retain the notations $\rho, \xi, \xi^1, \xi^{d-1}$ therein.

\subsection{Upper bound of $\alpha(\Gamma)$.}
For the first step, we prove:
\begin{proposition}\label{propositionofexponentupperbound}
If $\Gamma\subset \mathsf{PGL}(d,\mathbb{R})$ is projective Anosov, then $\alpha(\Gamma)\le d-1$.
\end{proposition}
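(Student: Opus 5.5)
The plan is to bound the affine series in the last branch $p=d-1$, where for $s\ge d-2$ the general term is
\[
\Big(\prod_{i=2}^{d-1}\frac{\sigma_i}{\sigma_1}(\gamma)\Big)\Big(\frac{\sigma_d}{\sigma_1}(\gamma)\Big)^{s-(d-2)}
=\frac{1}{\sigma_1(\gamma)^{d}}\Big(\frac{\sigma_d}{\sigma_1}(\gamma)\Big)^{s-(d-1)} ,
\]
using $\prod_i\sigma_i(\gamma)=1$. It suffices to show that $\Phi^{\mathrm{Aff}}_\Gamma(d-1+\varepsilon)<\infty$ for every $\varepsilon>0$. Convergence exactly at $s=d-1$ is not to be expected: for a cocompact lattice in $\mathsf{PGL}(2,\mathbb{R})$, the series $\sum_\gamma\sigma_1(\gamma)^{-2}$ is the $\alpha_1$-Poincar\'e series at its critical exponent $1$, which diverges by Theorem~\ref{theoremofexistenceofpattersonsullivan}.

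By Theorem~\ref{theoremofcriterionofsingularvaluegaps},
\[
\frac{\sigma_d}{\sigma_1}(\gamma)\le\frac{\sigma_2}{\sigma_1}(\gamma)\le e^{A}e^{-d_{word}(\mathrm{id},\gamma)/a}.
\]
Hence, grouping the sum by word-length spheres $S_n=\{\gamma: d_{word}(\mathrm{id},\gamma)=n\}$, it is enough to prove that $\sum_{\gamma\in S_n}\sigma_1(\gamma)^{-d}$ grows subexponentially in $n$. I will in fact aim for a uniform bound $\le M$. Then $\Phi^{\mathrm{Aff}}_\Gamma(d-1+\varepsilon)\le M\sum_n e^{-\varepsilon(n/a-A)}<\infty$.

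The bound on each sphere will come from volumes of ellipses. Fix a small $c>0$. By Lemma~\ref{lemmaofmapofellipse}, $\rho(\gamma)\mathcal{B}(V_{d-1}(\rho(\gamma))^\perp,c)=\mathcal{E}(\rho(\gamma),c)$. This ellipse has semi-axes $c\,\sigma_i/\sigma_1$ for $i=2,\dots,d$. Using the cocycle~\eqref{equationofcocycleformeasure2} (or directly, since for $c$ small the ellipsoid is nearly flat in its chart), its volume is comparable to
\[
c^{d-1}\prod_{i=2}^{d}\frac{\sigma_i}{\sigma_1}(\gamma)=c^{d-1}\sigma_1(\gamma)^{-d},
\]
with constants depending only on $c$ and $d$. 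Therefore
\[
\sum_{\gamma\in S_n}\sigma_1(\gamma)^{-d}\lesssim c^{-(d-1)}\int_{\mathbb{RP}^{d-1}}\sum_{\gamma\in S_n}\mathbf 1_{\mathcal{E}(\rho(\gamma),c)}\,dm_{vol},
\]
and everything reduces to a uniform bound $N$, independent of $n$, on the multiplicity of the family $\{\mathcal{E}(\rho(\gamma),c)\}_{\gamma\in S_n}$.

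The main obstacle is this multiplicity bound. The plan is to mimic Lemma~\ref{lemmaofdisjointshadows} with ellipses in place of shadows, arguing by contradiction as in the proof of Lemma~\ref{lemmaofellipseandshadows}. Suppose $\gamma_1,\gamma_2\in S_n$ and the two ellipses meet. Pull back by $\rho(\gamma_1)^{-1}$: the ball $\mathcal{B}(V_{d-1}(\rho(\gamma_1))^\perp,c)$ then meets $\rho(\gamma_1^{-1}\gamma_2)\mathcal{B}(V_{d-1}(\rho(\gamma_2))^\perp,c)$. Take a sequence of pairs with $d_{word}(\gamma_1,\gamma_2)\to\infty$ and pass to limits. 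The Cartan property and the strongly dynamics preserving property of Lemma~\ref{lemmaofcartanproperty} force the second set either to shrink to a limit point or to stay near the repelling hyperplane, while the first stays uniformly transverse. Both alternatives should contradict the hyperbolic geometry: the Morse lemma (Lemma~\ref{lemmaofquasigeodesics}) together with $|\gamma_1|=|\gamma_2|$ should force $\gamma_1b_0,\gamma_2b_0$ to be within bounded distance $C_0$. This is the delicate step, because ellipses are not contained in the limit set, so transversality must be controlled off $\Lambda^1$. If that is too hard, I would first try a small $c$ and restrict to points in a neighbourhood of $\Lambda^1(\Gamma)$, using Lemma~\ref{lemmaofellipseandshadows}.

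Given the claim $d_{word}(\gamma_1,\gamma_2)\le C_0'$, each point lies in at most $N=\#B_{word}(\mathrm{id},C_0')$ ellipses. So $\sum_{\gamma\in S_n}\sigma_1(\gamma)^{-d}\lesssim c^{-(d-1)}N\,m_{vol}(\mathbb{RP}^{d-1})$ uniformly in $n$, which completes the argument.
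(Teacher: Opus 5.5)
The reduction is sound. For $s=d-1+\varepsilon$ the term is $\sigma_1(\gamma)^{-d}\bigl(\tfrac{\sigma_d}{\sigma_1}(\gamma)\bigr)^{\varepsilon}$, and the second factor decays exponentially in word length. By \eqref{equationofcocycleformeasure}, $m_{vol}(\mathcal{E}(\rho(\gamma),c))\ge \sigma_1(\rho(\gamma))^{-d}\,m_{vol}(\mathcal{B}(v_0,c))$. So everything rests on a uniform multiplicity bound for $\{\mathcal{E}(\rho(\gamma),c)\}_{\gamma\in S_n}$.

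\textbf{The gap.} That multiplicity bound is the whole argument, and you leave it at ``should''. The two alternatives and the appeal to the Morse lemma do not give a mechanism, and the worry about transversality off $\Lambda^1(\Gamma)$ stays open. Your fallback of restricting to a neighbourhood of $\Lambda^1(\Gamma)$ would not help, since the volume count uses whole ellipses. The missing observation is short and closes the gap:

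\emph{Step 1.} Put $\eta=\gamma_1^{-1}\gamma_2$. If $|\gamma_1|=|\gamma_2|$, then
\[
(\gamma_1^{-1}\,|\,\eta)_{\mathrm{id}}=(\gamma_2^{-1}\,|\,\eta^{-1})_{\mathrm{id}}=\tfrac12|\eta| .
\]
Take a sequence with $|\eta_k|\to\infty$ and $\gamma_{i,k}^{-1}\to a_i\in\partial\Gamma$. Then $\eta_k\to a_1$ and $\eta_k^{-1}\to a_2$. By the Cartan property, $U_1(\rho(\eta_k))\to\xi^1(a_1)$, and the repelling hyperplane $V_{d-1}(\rho(\eta_k))$ has the \emph{same} limit $\xi^{d-1}(a_2)$ as $V_{d-1}(\rho(\gamma_{2,k}))$.

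\emph{Step 2.} For every $c>0$, each point of $\mathcal{B}(V^{\perp},c)$ makes angle at least $\pi/2-\arctan c$ with every point of the hyperplane $V$. So transversality off the limit set is automatic.

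\emph{Conclusion.} Let $B_i=\mathcal{B}(V_{d-1}(\rho(\gamma_i))^{\perp},c)$. By Steps 1 and 2, $B_2$ is uniformly transverse to the repelling hyperplane of $\rho(\eta_k)$, so your ``stays near the repelling hyperplane'' alternative never occurs. Hence $\rho(\eta_k)B_2$ collapses to $\xi^1(a_1)\in\xi^{d-1}(a_1)$. Meanwhile $B_1$ stays at angle at least $\pi/2-\arctan c$ from $\xi^{d-1}(a_1)$, contradicting $B_1\cap\rho(\eta_k)B_2\neq\emptyset$. This gives $D$ with $d_{word}(\gamma_1,\gamma_2)\le D$ whenever $\gamma_1,\gamma_2\in S_n$ have intersecting ellipses, so the multiplicity is at most $\#B_{word}(\mathrm{id},D)$. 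No Morse lemma is needed. Equal length is used exactly in the Gromov-product identity; across different lengths, ellipses genuinely nest.

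\textbf{Comparison with the paper.} With this filled in, your proof is correct and genuinely different from the paper's. The paper goes through Lemma~\ref{criticalexponentdomination} ($\alpha(\Gamma)>d-1\Rightarrow\delta^{d\omega_1}(\Gamma)>1$). It then rules out $\delta^{d\omega_1}(\Gamma)>1$ using the $d\omega_1$-Patterson--Sullivan measure, the shadow lemma, Lemma~\ref{lemmaofellipseandshadows} and Roblin's Vitali lemma. This sets up estimates reused in Proposition~\ref{propositionofwhenupperboundisattained}, but it is carried out in the model of Theorem~\ref{theoremofrealization}, which presupposes irreducibility.

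Your packing argument avoids Patterson--Sullivan theory and irreducibility. It gives the quantitative bound $\sup_n\sum_{|\gamma|=n}\sigma_1(\gamma)^{-d}<\infty$, which also yields $\delta^{d\omega_1}(\Gamma)\le 1$ directly. It also confronts a point the paper's chain passes over. The paper bounds $\mu$-masses by powers of $m_{vol}(\xi^1(\hat{\mathcal{O}}_R(b_0,\gamma b_0)))$, which are volumes of subsets of $\Lambda^1(\Gamma)$ and vanish whenever $m_{vol}(\Lambda^1(\Gamma))=0$. Replacing them by ellipse volumes requires controlling overlaps of ellipses off the limit set. Disjointness of shadows does not by itself give this; it is precisely the multiplicity issue you isolated.
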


Let $\theta = \{1,d-1\}$ and let $\phi_d = d\omega_1\in \mathfrak{a}_{\theta}^*$. We prove a lemma first:

\begin{lemma}\label{criticalexponentdomination}
Let $\Gamma\subset \mathsf{PGL}(d,\mathbb{R})$ be a projective Anosov subgroup. If the affinity exponent $\alpha(\Gamma)>d-1$, then the critical exponent $\delta^{\phi_d}(\Gamma)>1$.
\end{lemma}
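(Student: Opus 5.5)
The plan is to compare the two series term by term, after rewriting both in terms of singular values using the normalization $|\det g| = 1$ fixed in Section~\ref{Sectionofconventions}. The key identity is that, at the boundary value $s = d-1$ (which lies in the last piece $p = d-1$ of the affine series), each affine term equals the $\phi_d$-Poincar\'e term at exponent $1$. Beyond $d-1$, the extra factor $(\sigma_d/\sigma_1)^{t}$ can be absorbed into a strictly larger power of $\sigma_1^{-d}$.

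\textbf{Step 1: the identity at $s=d-1$.} Since $\sigma_1(\gamma)\cdots\sigma_d(\gamma) = 1$, for $s = d-1+t$ with $t\ge 0$ the affine term is
\[
\left(\prod_{i=2}^{d-1}\frac{\sigma_i}{\sigma_1}(\gamma)\right)\left(\frac{\sigma_d}{\sigma_1}(\gamma)\right)^{1+t}
= \frac{\sigma_1\cdots\sigma_d(\gamma)}{\sigma_1(\gamma)^{d}}\left(\frac{\sigma_d}{\sigma_1}(\gamma)\right)^{t}
= \sigma_1(\gamma)^{-d}\left(\frac{\sigma_d}{\sigma_1}(\gamma)\right)^{t}.
\]
We also have $e^{-s'\phi_d(\kappa(\gamma))} = \sigma_1(\gamma)^{-d s'}$.

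\textbf{Step 2: absorbing the extra factor.} The normalization gives $\sigma_d(\gamma)\le 1\le \sigma_1(\gamma)$, so $\sigma_d/\sigma_1 \le 1/\sigma_1$. Hence each affine term at $s = d-1+t$ is bounded above by
\[
\sigma_1(\gamma)^{-d-t} = e^{-(1+t/d)\,\phi_d(\kappa(\gamma))}.
\]
Summing over $\Gamma$ gives
\[
\Phi^{\mathrm{Aff}}_\Gamma(d-1+t)\le \mathcal{Q}^{\phi_d}_\Gamma(1+t/d).
\]

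\textbf{Step 3: conclusion.} Every ratio $\sigma_i/\sigma_1$ is at most $1$, so $\Phi^{\mathrm{Aff}}_\Gamma$ is non-increasing in $s$. Therefore $\Phi^{\mathrm{Aff}}_\Gamma(s) = \infty$ for every $s<\alpha(\Gamma)$, by the definition of $\alpha(\Gamma)$ as the supremum of the divergence set. If $\alpha(\Gamma)>d-1$, pick $t$ with $0<t<\alpha(\Gamma)-(d-1)$; if $\alpha(\Gamma)=\infty$, take $t=1$. Then $\Phi^{\mathrm{Aff}}_\Gamma(d-1+t) = \infty$, so by Step 2 $\mathcal{Q}^{\phi_d}_\Gamma(1+t/d) = \infty$. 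This forces $\delta^{\phi_d}(\Gamma)\ge 1+t/d>1$.

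There is no serious obstacle. The only points needing care are the correct identification of the piece $p = d-1$ of the affine series at $s\ge d-2$, and the inequality $\sigma_d\le 1$, which comes from the determinant normalization. Projective Anosovness is not needed for this inequality itself; it only guarantees that the relevant quantities are finite and well defined.
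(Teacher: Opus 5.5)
Your proof is correct. Its skeleton matches the paper's: rewrite the $p=d-1$ piece of $\Phi^{\mathrm{Aff}}_\Gamma$ for $s\ge d-1$ as $\sum_{\gamma}\sigma_1(\gamma)^{-d}\bigl(\tfrac{\sigma_d}{\sigma_1}(\gamma)\bigr)^{s-(d-1)}$, then compare it term by term with $\mathcal{Q}^{\phi_d}_\Gamma$ at a shifted exponent. The difference is how you obtain the key comparison between $\log\frac{\sigma_1}{\sigma_d}(\gamma)$ and $\log\sigma_1(\gamma)$. The paper uses the Anosov singular-value-gap criterion, submultiplicativity, and $\omega_1=\frac1d\sum_i(d-i)\alpha_i$ to show that both quantities are coarsely comparable to word length. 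This gives $\frac1C\log\sigma_1-c'\le\log\frac{\sigma_1}{\sigma_d}\le C\log\sigma_1+c'$, and hence a two-sided sandwich of $\Phi^{\mathrm{Aff}}_\Gamma(s)$ between shifted Poincar\'e series with multiplicative errors. Only the upper half of that sandwich is actually used for this lemma. You get exactly that upper half, with $C=1$ and $c'=0$, from the single observation $\sigma_d\le 1$ under the normalization $|\det|=1$. Your route is shorter and shows the lemma holds for every subgroup of $\mathsf{PGL}(d,\mathbb{R})$, with no Anosov hypothesis. It also yields the explicit bound $\delta^{\phi_d}(\Gamma)\ge 1+\frac{\alpha(\Gamma)-(d-1)}{d}$. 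The paper reuses its coarse two-sided comparison in the next proposition, to get $\delta^{\phi_d}(\Gamma)=1$ from $\alpha(\Gamma)=d-1$. Your approach covers that step too: $\sigma_d^{-1}=\sigma_1\cdots\sigma_{d-1}\le\sigma_1^{d-1}$ gives $\log\frac{\sigma_1}{\sigma_d}\le d\log\sigma_1$, so the reverse comparison is equally elementary.
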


\begin{proof}
Note that for any $\gamma\in \Gamma$,
$$
\log \frac{\sigma_1}{\sigma_d}(\gamma)\ge \log \frac{\sigma_1}{\sigma_2}(\gamma).
$$
From Theorem~\ref{theoremofcriterionofsingularvaluegaps}, there exist $A'>1$ and $a'>0$ such that
$$
\frac{1}{A'} d_{word}(\mathrm{id},\gamma) - a'\le \log \frac{\sigma_1(\gamma)}{\sigma_d(\gamma)}.
$$
On the other hand, for any $g,h\in \mathsf{GL}(d,\mathbb{R})$, the definition of singular values yields
$$
\sigma_1(gh)\le \sigma_1(g)\sigma_1(h).
$$
Since $\sigma_d(g) = \frac{1}{\sigma_1(g^{-1})}$, we also have
$$
\sigma_d(gh)\ge \sigma_d(g)\sigma_d(h).
$$
Therefore, $\frac{\sigma_1(gh)}{\sigma_d(gh)}\le \frac{\sigma_1(g)}{\sigma_d(g)} \frac{\sigma_1(h)}{\sigma_d(h)}$. Enlarging $A',a'$ if necessary, the subadditivity gives us
$$
\frac{1}{A'} d_{word}(\mathrm{id},\gamma) - a'\le \log \frac{\sigma_1(\gamma)}{\sigma_d(\gamma)}\le  A' d_{word}(\mathrm{id},\gamma) + a'.
$$
Note also that we can express the first fundamental weight $\omega_1$ as a positive combination of simple roots, i.e.,
$$
\omega_1 = \frac{1}{d} \sum_{i=1}^{d-1} (d-i) \alpha_i,
$$
which implies that $\log \sigma_1(g)\ge \frac{d-1}{d} \log \frac{\sigma_1(g)}{\sigma_2(g)}$. Combine this with the subadditivity of $\sigma_1$, we further enlarging $A',a'$ if necessary to obtain
$$
\frac{1}{A'} d_{word}(\mathrm{id},\gamma) - a'\le \log\sigma_1(\gamma)\le  A' d_{word}(\mathrm{id},\gamma) + a'.
$$
Thus, there exist constants $C>1, c'>0$ such that for any $\gamma\in \Gamma$,
\begin{equation}\label{equationoffuncationaldomination}
\frac{1}{C}\log \sigma_1(\gamma)-c' \le  \log \frac{\sigma_1(\gamma)}{\sigma_d(\gamma)}\le  C \log \sigma_1(\gamma) +c'.
\end{equation}

For $s \ge d-1$, the affine series has the expression of:
$$
\Phi^{\mathrm{Aff}}_{\Gamma}(s) = \sum_{\gamma \in \Gamma} \left( \frac{\sigma_2(\gamma) \cdots \sigma_{d-1}(\gamma)}{\sigma_1(\gamma)^{d-2}} \right) \left(\frac{\sigma_d(\gamma)}{\sigma_1(\gamma)}\right)^{s-(d-2)}.
$$
Since $\prod_{i=1}^d \sigma_i(\gamma) = 1$, we have $\sigma_2 \cdots \sigma_{d-1} = \frac{1}{\sigma_1 \sigma_d}$. Substituting this simplifies the sum to:
$$
\Phi^{\mathrm{Aff}}_{\Gamma}(s) = \sum_{\gamma \in \Gamma} \frac{1}{\sigma_1(\gamma)^d} \left(\frac{\sigma_d(\gamma)}{\sigma_1(\gamma)}\right)^{s-(d-1)} = \sum_{\gamma \in \Gamma} e^{-\phi_d(\kappa(\gamma))}\left(\frac{\sigma_d(\gamma)}{\sigma_1(\gamma)}\right)^{s-(d-1)}. 
$$

From Equation~(\ref{equationoffuncationaldomination}), multiplying by $-(s-(d-1)) \le 0$ yields
$$
-C(s-(d-1)) \log \sigma_1(\gamma) - c'(s-(d-1)) \le \log \left(\frac{\sigma_d(\gamma)}{\sigma_1(\gamma)}\right)^{s-(d-1)} \le -\frac{1}{C}(s-(d-1)) \log \sigma_1(\gamma) + c'(s-(d-1)).
$$
Exponentiating and multiplying by $e^{-\phi_d(\kappa(\gamma))} = \sigma_1(\gamma)^{-d}$, we obtain bounds for the summands. Recall that $\mathcal{Q}_\Gamma^{\phi_d}(s) := \sum_{\gamma \in \Gamma} e^{-s \phi_d(\kappa(\gamma))}$ denotes the Poincaré series. Summing over $\Gamma$, we get for $s\ge d-1$:
$$
e^{-c'(s-(d-1))}  \mathcal{Q}_{\Gamma}^{\phi_d}\left(1+\frac{C(s-d+1)}{d}\right)\le  
 \Phi_{\Gamma}^{\mathrm{Aff}}(s) \le  e^{c'(s-(d-1))}  \mathcal{Q}_{\Gamma}^{\phi_d}\left(1+\frac{s-d+1}{Cd}\right).
$$

By definition, if the affinity exponent $\alpha(\Gamma)>d-1$, the affine series diverges at $s = d-1+\epsilon$ for some $\epsilon>0$. So $\mathcal{Q}^{\phi_d}_{\Gamma}$ diverges at $s' = 1+\frac{\epsilon}{Cd}$, concluding the proof.
\end{proof}

\begin{proof}[{Proof of Proposition~\ref{propositionofexponentupperbound}}]
We proceed by contradiction. By Lemma~\ref{criticalexponentdomination}, it suffices to prove that $\delta^{\phi_d}(\Gamma) > 1$ can not be true. 

Fix $b_0\in \mathcal{C}$. Choose $R > 0$ large enough to satisfy the statement Theorem~\ref{shadowlemma} , and let $\mu$ be the $\phi_d$-Patterson--Sullivan measure pushed forward to $\Lambda^1(\Gamma)$. By Lemma~\ref{lemmaofellipseandshadows}~(2), there exists a sufficiently large constant $C>1$ such that for $n \gg 1$ and any $\gamma\in \mathcal{A}_n(\phi_d)$, we have
\begin{equation}\label{eqoftwoshadows}
   \frac{1}{C} e^{-\delta^{\phi_d}(\Gamma) \phi_d (\kappa(\rho(\gamma)))}\le  \mu\big(\xi^1( \hat{\mathcal{O}}_R(b_0,\gamma b_0))\big)\le \mu\big(\xi^1( \hat{\mathcal{O}}_{5R}(b_0,\gamma b_0))\big)\le C e^{-\delta^{\phi_d}(\Gamma) \phi_d (\kappa(\rho(\gamma)))}, 
\end{equation}
and the geometric inclusion
\[
        \mathcal{E}\left(\rho(\gamma),\frac{1}{C}\right)\cap \Lambda^1(\rho(\Gamma_0))\subset \xi^1( \hat{\mathcal{O}}_R(b_0,\gamma b_0))\subset \xi^1( \hat{\mathcal{O}}_{5R}(b_0,\gamma b_0))\subset  \mathcal{E}(\rho(\gamma), C).
\]

For fixed $C>0$, recall that for any $1$-divergent $g\in \mathsf{PGL}(d,\mathbb{R})$, the volume of its corresponding ellipsoid $\mathcal{E}(g, C)$ is governed up to constant multiple by
\[
e^{-\phi_d(\kappa(g))} = \left( \frac{1}{\sigma_1(g)}\right)^d = \frac{\sigma_2(g)}{\sigma_1(g)}\cdots \frac{\sigma_d(g)}{\sigma_1(g)}.
\]
This implies that for the fixed $R$ and $C$, there exists a constant $C'>1$ such that for $n\gg 1$ and $\gamma \in \mathcal{A}_n(\phi_d)$, we have
\[
 \frac{1}{C'}e^{-\phi_d(\kappa(\rho(\gamma)))}\le m_{vol}\left(\mathcal{E}\left(\rho(\gamma), \frac{1}{C}\right)\right) \le m_{vol}\big(\mathcal{E}(\rho(\gamma), C)\big)\le C'e^{-\phi_d(\kappa(\rho(\gamma)))}.
\]
Enlarging $C$ if necessary, there exists $N>1$ such that if $n>N$ and $\gamma\in \mathcal{A}_n(\phi_d)$, we have:
\[
\mu\big(\xi^1( \hat{\mathcal{O}}_R(b_0,\gamma b_0))\big)\le \mu\big(\xi^1( \hat{\mathcal{O}}_{5R}(b_0,\gamma b_0))\big)\le  C \big(m_{vol}(\xi^1( \hat{\mathcal{O}}_R(b_0,\gamma b_0)))\big)^{\delta^{\phi_d}(\Gamma)}.
\]

For any $\epsilon>0$, let $N_{\epsilon}>N$ be chosen such that if $n>N_{\epsilon}$ and $\gamma\in \mathcal{A}_n(\phi_d)$, then $m_{vol}(\xi^1( \hat{\mathcal{O}}_R(b_0,\gamma b_0)))<\epsilon$. Let $\mathcal{I}_{\epsilon} = \{\gamma\in \Gamma_0 \mid \gamma \in \mathcal{A}_n(\phi_d) \text{ for some } n>N_{\epsilon}\}$.

Note that the union $\bigcup_{\gamma\in \mathcal{I}_{\epsilon}} \xi^1(\hat{\mathcal{O}}_R(b_0,\gamma b_0))$ covers $\Lambda^1(\Gamma)$. From Lemma~\ref{lemmaofshadowvitali}, we can extract a subset $\mathcal{J}_{\epsilon}\subset \mathcal{I}_{\epsilon}$ such that the family $\{\mathcal{O}_{R}(b_0, \gamma b_0) \mid \gamma \in \mathcal{J}_{\epsilon}\}$ is pairwise disjoint, and
$$
    \bigcup_{\gamma \in \mathcal{I}_{\epsilon}} \mathcal{O}_{R}(b_0, \gamma b_0) \subset \bigcup_{\gamma \in \mathcal{J}_{\epsilon}} \mathcal{O}_{5R}(b_0, \gamma b_0).
$$

Since the sets $\hat{\mathcal{O}}_R(b_0, \gamma b_0)$ are disjoint for $\gamma \in \mathcal{J}_\epsilon$, and $\xi^1$ is a bijection, their images under $\xi^1$ are also mutually disjoint. We can now estimate the total measure of the limit set:
\begin{align*}
\mu(\Lambda^1(\Gamma)) &\le \mu\left( \bigcup_{\gamma \in \mathcal{J}_{\epsilon}} \xi^1(\hat{\mathcal{O}}_{5R}(b_0, \gamma b_0)) \right) \le \sum_{\gamma\in \mathcal{J}_{\epsilon}}\mu\big(\xi^1(\hat{\mathcal{O}}_{5R}(b_0, \gamma b_0))\big) \\
&\le C^2 \sum_{\gamma\in \mathcal{J}_{\epsilon}}\mu\big(\xi^1(\hat{\mathcal{O}}_{R}(b_0, \gamma b_0))\big)&\text{(Applying Equation~\ref{eqoftwoshadows})}\\
&\le C^3 \sum_{\gamma\in \mathcal{J}_{\epsilon}} \big(m_{vol}(\xi^1( \hat{\mathcal{O}}_R(b_0,\gamma b_0)))\big)^{\delta^{\phi_d}(\Gamma)} \\
&= C^3 \sum_{\gamma\in \mathcal{J}_{\epsilon}} m_{vol}\big(\xi^1( \hat{\mathcal{O}}_R(b_0,\gamma b_0))\big) \cdot \big(m_{vol}(\xi^1( \hat{\mathcal{O}}_R(b_0,\gamma b_0)))\big)^{\delta^{\phi_d}(\Gamma)-1} \\
&< C^3 \epsilon^{\delta^{\phi_d}(\Gamma)-1} \sum_{\gamma\in \mathcal{J}_{\epsilon}} m_{vol}\big(\xi^1( \hat{\mathcal{O}}_R(b_0,\gamma b_0))\big) \\
&= C^3 \epsilon^{\delta^{\phi_d}(\Gamma)-1}  m_{vol}\left(\bigcup_{\gamma\in \mathcal{J}_{\epsilon}}\xi^1( \hat{\mathcal{O}}_R(b_0,\gamma b_0))\right) \\
&\le C^3 \epsilon^{\delta^{\phi_d}(\Gamma)-1} m_{vol}(\mathbb{RP}^{d-1}).
\end{align*}
Since $\delta^{\phi_d}(\Gamma) > 1$ and $\epsilon > 0$ can be chosen to be arbitrarily small, we have $\mu(\Lambda^1(\Gamma)) = 0$, which contradicts the fact that $\mu$ is a probability measure.
\end{proof}

\subsection{Positive Volume of the Limit Set}

Recall that $m_{vol}$ denotes the volume measure on $\mathbb{RP}^{d-1}$ introduced in Section~\ref{subsectionofvolume}. In this subsection, we prove the following proposition:

\begin{proposition}\label{propositionofwhenupperboundisattained}
If $\Gamma\subset \mathsf{PGL}(d,\mathbb{R})$ is a projective Anosov subgroup, then $\dim_H \Lambda^1(\Gamma) = d -1 $ implies that $m_{vol}(\Lambda^1(\Gamma))>0$.
\end{proposition}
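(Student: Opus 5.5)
We will show that $\dim_H\Lambda^1(\Gamma)=d-1$ forces $\delta^{\phi_d}(\Gamma)=1$, and then compare the $\phi_d$-Patterson--Sullivan measure $\mu$ with $m_{vol}$ shadow by shadow.

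\emph{Step 1: the exponent is exactly $1$.} By Theorem~\ref{pswtheomrem} and Proposition~\ref{propositionofexponentupperbound}, $d-1=\dim_H\Lambda^1(\Gamma)\le\alpha(\Gamma)\le d-1$, so $\alpha(\Gamma)=d-1$. The proof of Proposition~\ref{propositionofexponentupperbound} already gives $\delta^{\phi_d}(\Gamma)\le 1$. For the reverse inequality, suppose $\delta:=\delta^{\phi_d}(\Gamma)<1$. For $s$ slightly below $d-1$ (in $[d-2,d-1]$) the affine series is
\[
\Phi^{\mathrm{Aff}}_\Gamma(s)=\sum_\gamma \frac{\sigma_2\cdots\sigma_{d-1}}{\sigma_1^{d-2}}\Big(\frac{\sigma_{d-1}}{\sigma_1}\Big)^{s-(d-2)}=\sum_\gamma e^{-\phi_d(\kappa(\gamma))}\Big(\frac{\sigma_1}{\sigma_{d-1}}\Big)^{(d-1)-s}.
\]
Since $\log\frac{\sigma_1}{\sigma_{d-1}}\le C\log\sigma_1+c'$, as in the proof of Lemma~\ref{criticalexponentdomination}, the affine series at $s=d-1-\eta$ is bounded by a constant times $\mathcal{Q}^{\phi_d}_\Gamma\big(1-\tfrac{C\eta}{d}\big)$. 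This is finite for small $\eta$ because $\delta<1$. Hence $\alpha(\Gamma)<d-1$, a contradiction, so $\delta^{\phi_d}(\Gamma)=1$.

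\emph{Step 2: comparing $\mu$ and $m_{vol}$ on shadows.} With $\delta=1$, combine Theorem~\ref{shadowlemma} with the ellipsoid volume estimate from the proof of Proposition~\ref{propositionofexponentupperbound}. For all but finitely many $\gamma$ this gives
\[
\mu\big(\xi^1(\hat{\mathcal O}_{R}(b_0,\gamma b_0))\big)\asymp e^{-\phi_d(\kappa(\rho(\gamma)))}\asymp m_{vol}\big(\mathcal E(\rho(\gamma),C)\big)\asymp m_{vol}\big(\mathcal E(\rho(\gamma),1/C)\big),
\]
with the inclusions $\mathcal E(\rho(\gamma),1/C)\cap\Lambda^1(\Gamma)\subset\xi^1(\hat{\mathcal O}_R)\subset\xi^1(\hat{\mathcal O}_{5R})\subset\mathcal E(\rho(\gamma),C)$.

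\emph{Step 3: $\mu\ll m_{vol}$, hence positive volume.} Suppose $m_{vol}(\Lambda^1(\Gamma))=0$. Fix $\epsilon>0$ and take an open set $U\supset\Lambda^1(\Gamma)$ with $m_{vol}(U)<\epsilon$. Every $x\in\Lambda^1(\Gamma)$ lies in shadows $\xi^1(\hat{\mathcal O}_R(b_0,\gamma_n b_0))$ along a quasi-geodesic (Lemma~\ref{lemmaofquasigeodesics}), and the corresponding ellipsoids $\mathcal E(\rho(\gamma_n),C)$ shrink to $x$. So we may choose a family $I$ of such $\gamma$ with $\mathcal E(\rho(\gamma),C)\subset U$ whose $R$-shadows cover $\Lambda^1(\Gamma)$. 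Apply Lemma~\ref{lemmaofshadowvitali} to obtain a disjoint subfamily $J$ whose $5R$-shadows still cover. Then
\[
1=\mu(\Lambda^1(\Gamma))\le\sum_{\gamma\in J}\mu(\xi^1\hat{\mathcal O}_{5R})\lesssim\sum_{\gamma\in J}m_{vol}\big(\mathcal E(\rho(\gamma),1/C)\big).
\]

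\emph{Main obstacle.} The last sum must be bounded by $m_{vol}(U)$, which requires the small ellipsoids $\mathcal E(\rho(\gamma),1/C)$, $\gamma\in J$, to be essentially disjoint. The Vitali lemma only gives disjointness of the shadows, i.e.\ of their traces on $\Lambda^1(\Gamma)$, not of the ellipsoids in $\mathbb{RP}^{d-1}$. I would first handle this with a bounded-multiplicity argument. Two ellipsoids $\mathcal E(\rho(\gamma_1),1/C)$ and $\mathcal E(\rho(\gamma_2),1/C)$ of comparable scale that intersect should have $\gamma_1 b_0$ and $\gamma_2 b_0$ within bounded distance, via Lemma~\ref{lemmaofellipseandshadows}(1) applied with a slightly larger constant together with Lemma~\ref{lemmaofdisjointshadows}. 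This gives bounded overlap within each annulus $\mathcal A_n(\phi_d)$. Across different scales I would restrict to stopping-time maximal shadows, choosing for each point the first $n$ with ellipsoid inside $U$. If multiplicity cannot be controlled directly, the fallback is to replace the small ellipsoids by the volume of $\mathcal E(\rho(\gamma),C)\cap U$ and use a Besicovitch-type bounded-overlap property of these comparable ellipsoids. Granting bounded multiplicity $M$, the sum is at most $M\epsilon$. Letting $\epsilon\to0$ gives $1\le0$, a contradiction, so $m_{vol}(\Lambda^1(\Gamma))>0$.
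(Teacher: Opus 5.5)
Your outline follows the paper's proof. The paper also uses $\alpha(\Gamma)=d-1$ to pin down $\delta^{\phi_d}(\Gamma)$, matches the shadow lemma against the ellipsoid volume $e^{-\phi_d(\kappa(\rho(\gamma)))}$, and gets a contradiction from outer regularity plus Lemma~\ref{lemmaofshadowvitali}. Your Step~1 spells out the direction $\delta^{\phi_d}(\Gamma)\ge 1$, which the paper leaves implicit. Fix the index there: for $s\in[d-2,d-1]$ the last factor is $(\sigma_d/\sigma_1)^{s-(d-2)}$. With $\sigma_d$ in place of $\sigma_{d-1}$, your identity and the bound $\Phi^{\mathrm{Aff}}_\Gamma(d-1-\eta)\le e^{c'\eta}\mathcal{Q}^{\phi_d}_\Gamma(1-C\eta/d)$ are correct. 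Only $\delta^{\phi_d}(\Gamma)\ge 1$ is used afterwards, because then $e^{-\delta\,\phi_d(\kappa(\rho(\gamma)))}\le e^{-\phi_d(\kappa(\rho(\gamma)))}$.

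\textbf{The gap.} It is the one you flag yourself. Everything hinges on $\sum_{\gamma\in J}m_{vol}(\mathcal{E}(\rho(\gamma),1/C))\lesssim m_{vol}(U)$, and your sketch does not deliver it, for four reasons. First, Lemma~\ref{lemmaofellipseandshadows}(1) only controls $\mathcal{E}(\rho(\gamma),C)\cap\Lambda^1(\Gamma)$, so two ellipsoids that meet off $\Lambda^1(\Gamma)$ tell you nothing about the shadows. For eccentric ellipsoids, meeting does not even put one center into a bounded dilate of the other. Second, equal $\phi_d$-level means comparable volume, not comparable shape. Third, the Vitali family $J$ mixes scales arbitrarily. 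Fourth, Besicovitch-type overlap bounds fail for ellipsoids of unbounded, varying eccentricity.

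The paper's written proof does not close this either. It bounds $\mu(\xi^1(\hat{\mathcal{O}}_{5R}(b_0,\gamma b_0)))$ by $C\,m_{vol}(\xi^1(\hat{\mathcal{O}}_{R}(b_0,\gamma b_0)))$ and sums using disjointness of shadows. But $\xi^1(\hat{\mathcal{O}}_{R}(b_0,\gamma b_0))\subset\Lambda^1(\Gamma)$ is $m_{vol}$-null under the contradiction hypothesis. So that inequality only makes sense with the ellipsoid in place of the shadow, and then the disjointness used in the next line is exactly your missing estimate. The same issue affects the proof of Proposition~\ref{propositionofexponentupperbound}, which you cite only for $\delta^{\phi_d}(\Gamma)\le1$, and that direction is not needed.

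\textbf{A possible repair.} One way to close the gap is to drop Vitali and work at a single word length, where overlap can be controlled. I have only sketched this, so you will need to check the details.

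Let $S_n=\{\gamma\in\Gamma_0 : d_{word}(\mathrm{id},\gamma)=n\}$. Geodesic rays in the Cayley graph and the Morse lemma give a fixed $R$ with $\bigcup_{\gamma\in S_n}\hat{\mathcal{O}}_R(b_0,\gamma b_0)=\Lambda^1(\Gamma_0)$ for all $n$. Hence, for any fixed small $c$,
$1\le\sum_{\gamma\in S_n}\mu(\xi^1(\hat{\mathcal{O}}_R(b_0,\gamma b_0)))\le C'\sum_{\gamma\in S_n}m_{vol}(\mathcal{E}(\rho(\gamma),c))$.
Meanwhile $\bigcup_{\gamma\in S_n}\mathcal{E}(\rho(\gamma),c)$ lies in the $r_n$-neighbourhood of $\Lambda^1(\Gamma)$ with $r_n\to0$.

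To bound the overlap, take $\gamma_1,\gamma_2\in S_n$. Let $\gamma_0$ be the point of a geodesic from $\mathrm{id}$ to $\gamma_1$ at distance about $(\gamma_1\,|\,\gamma_2)_{\mathrm{id}}$, and set $\eta_i=\gamma_0^{-1}\gamma_i$. Then $|\eta_1|-|\eta_2|=O(1)$ and $(\eta_1\,|\,\eta_2)_{\mathrm{id}}=O(1)$.
\begin{itemize}
\item If $|\eta_1|\le L$, then $d_{word}(\gamma_1,\gamma_2)\le 2L+O(1)$.
\item Otherwise, by Lemma~\ref{lemmaofmapofellipse}, $\rho(\gamma_0)^{-1}\mathcal{E}(\rho(\gamma_i),c)=\rho(\eta_i)\mathcal{B}(V_{d-1}(\rho(\gamma_i))^{\perp},c)$. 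For $L$ large this is contained in $\mathcal{E}(\rho(\eta_i),2c)$, because $V_{d-1}(\rho(\gamma_0\eta_i))$ and $V_{d-1}(\rho(\eta_i))$ become uniformly close. These sets lie within angle $\arctan(2c)$ of $U_1(\rho(\eta_i))$. The two attractors are at least some $\epsilon>0$ apart, since $(\eta_1\,|\,\eta_2)_{\mathrm{id}}$ is bounded.
\end{itemize}
Both uniformity claims follow by compactness from Lemma~\ref{lemmaofcartanproperty} and injectivity of $\xi$. Choose $c$ with $\arctan(2c)<\epsilon/2$, then choose $L$. The ellipsoids over $S_n$ then have multiplicity at most some $M$ independent of $n$. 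This gives $m_{vol}(\Lambda^1(\Gamma))=\lim_{r\to0}m_{vol}(N_r(\Lambda^1(\Gamma)))\ge 1/(C'M)>0$, with no contradiction hypothesis needed.
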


\begin{proof}
    The idea of the proof is similar to that of the previous measure estimates.

    Theorem~\ref{pswtheomrem} implies that if $\dim_H \Lambda^1(\Gamma) = d -1$, then $\alpha(\Gamma)\ge d-1$. On the other hand, Proposition~\ref{propositionofexponentupperbound} guarantees that $\alpha(\Gamma)\le d-1$. Thus, we must have exactly $\alpha(\Gamma)= d-1$. As shown in the proof of Lemma~\ref{criticalexponentdomination}, this implies that the critical exponent for the functional $\phi_d = d\omega_1$ is precisely $\delta^{\phi_d}(\Gamma) = 1$.

    Let $\mu$ be the associated $\phi_d$-Patterson--Sullivan measure pushed forward to $\Lambda^1(\Gamma)$.

    Fix $b_0\in \mathcal{C}$. Following the proof of Proposition~\ref{propositionofexponentupperbound},  for any sufficiently large $R>0$, there exists a constant $C>1$ such that for all but finitely many $\gamma\in \Gamma_0$, we have:
\[
   \frac{1}{C} e^{-\phi_d (\kappa(\rho(\gamma)))}\le  \mu\big(\xi^1( \hat{\mathcal{O}}_R(b_0,\gamma b_0))\big)\le \mu\big(\xi^1( \hat{\mathcal{O}}_{5R}(b_0,\gamma b_0))\big)\le C e^{- \phi_d (\kappa(\rho(\gamma)))}
\]
and
\[
\mu\big(\xi^1( \hat{\mathcal{O}}_R(b_0,\gamma b_0))\big)\le \mu\big(\xi^1( \hat{\mathcal{O}}_{5R}(b_0,\gamma b_0))\big)\le  C \, m_{vol}\big(\xi^1( \hat{\mathcal{O}}_R(b_0,\gamma b_0))\big).
\]

    Since $\Gamma_0$ acts cocompactly on $\mathcal{C}$, we can further enlarge $R>0$ so that for any $x\in \Lambda^1(\Gamma_0)$, there exists a sequence $(\gamma_n)_{n\in \mathbb{N}}$ in $\Gamma_0$ such that $\gamma_n b_0\to x$ and $x\in \hat{\mathcal{O}}_R(b_0,\gamma_n b_0)$ for all $n$. 
    
    Now, suppose for the sake of contradiction that $m_{vol}(\Lambda^1(\Gamma)) = 0$. By the outer regularity of the Lebesgue measure, for any $\epsilon>0$, we can find an open neighborhood $U_{\epsilon} \subset \mathbb{RP}^{d-1}$ containing $\Lambda^1(\Gamma)$ such that $m_{vol}(U_{\epsilon}) < \epsilon$. By Lemma~\ref{lemmaofellipseandshadows}~(2), the shadows $\xi^1(\hat{\mathcal{O}}_{5R}(b_0,\gamma_n b_0))$ are eventually contained in the ellipses which shrink to the point $\xi^1(x)$ as $n \to \infty$. Thus, for the fixed $x$ and corresponding sequence $(\gamma_n)$, there exists $N>0$ such that $n>N$ implies $\xi^1(\hat{\mathcal{O}}_{5R}(b_0,\gamma_n b_0))\subset U_{\epsilon}$.

    Let $\mathcal{I}'_{\epsilon} = \{\gamma\in \Gamma_0 \mid \xi^1(\hat{\mathcal{O}}_{5R}(b_0,\gamma b_0))\subset U_{\epsilon}\}$. Note that the union $\bigcup_{\gamma\in \mathcal{I}'_{\epsilon}} \hat{\mathcal{O}}_{R}(b_0,\gamma b_0)$ covers $\Lambda^1(\Gamma_0)$. From Lemma~\ref{lemmaofshadowvitali}, we can extract a subset $\mathcal{J}'_{\epsilon}\subset \mathcal{I}'_{\epsilon}$ such that the family $\{\mathcal{O}_{R}(b_0, \gamma b_0) \mid \gamma \in \mathcal{J}'_{\epsilon}\}$ is pairwise disjoint, and
$$
    \bigcup_{\gamma \in \mathcal{I}'_{\epsilon}} \mathcal{O}_{R}(b_0, \gamma b_0) \subset \bigcup_{\gamma \in \mathcal{J}'_{\epsilon}} \mathcal{O}_{5R}(b_0, \gamma b_0).
$$

We have the following estimate as we did in the proof of Proposition~\ref{propositionofexponentupperbound}
\begin{align*}
\mu(\Lambda^1(\Gamma)) &\le \mu\left(\bigcup_{\gamma \in \mathcal{J}'_{\epsilon}} \xi^1(\hat{\mathcal{O}}_{5R}(b_0, \gamma b_0))\right) \\
&\le \sum_{\gamma \in \mathcal{J}'_{\epsilon}} \mu\big(\xi^1(\hat{\mathcal{O}}_{5R}(b_0, \gamma b_0))\big) \\
& \le C^2 \sum_{\gamma \in \mathcal{J}'_{\epsilon}} \mu\big(\xi^1(\hat{\mathcal{O}}_{R}(b_0, \gamma b_0))\big) \\
&\le C^3 \sum_{\gamma \in \mathcal{J}'_{\epsilon}} m_{vol}\big(\xi^1(\hat{\mathcal{O}}_R(b_0,\gamma b_0))\big).\\
&= C^3 \, m_{vol}\left( \bigcup_{\gamma \in \mathcal{J}'_{\epsilon}} \xi^1(\hat{\mathcal{O}}_R(b_0,\gamma b_0)) \right) \\
&\le C^3 \, m_{vol}(U_{\epsilon})\\
&< C^3 \epsilon.
\end{align*}

Since $\epsilon > 0$ can be chosen arbitrarily small, we conclude that $\mu(\Lambda^1(\Gamma)) = 0$, which contradicts the fact that $\mu$ is a probability measure. 
\end{proof}

\subsection{Convexity of the limit set}
The last step of proving Theorem~\ref{theoremoffulldimension} is due to the following proposition:

\begin{proposition}\label{propositionofcontainingaline}
If $\Gamma\subset\mathsf{PGL}(d,\mathbb{R})$ is a projective Anosov subgroup and $\mathrm{dim}_{H} \Lambda^1(\Gamma) = d - 1$, then there exist distinct $x, y \in \Lambda^1(\Gamma)$ such that $\mathrm{Span}(x,y) \subset \Lambda^1(\Gamma)$.
\end{proposition}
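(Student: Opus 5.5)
By Proposition~\ref{propositionofwhenupperboundisattained}, the hypothesis $\dim_H \Lambda^1(\Gamma) = d-1$ gives $m_{vol}(\Lambda^1(\Gamma))>0$. By the Lebesgue density theorem we can pick a point $p\in \Lambda^1(\Gamma)$ of density one; by the ergodicity statement in Theorem~\ref{ergodicities} and the quasi-invariance of $m_{vol}$ under $\Gamma$ (Equation~\eqref{equationofcocycleformeasure}), we may take $p$ to be conical. Concretely, we take $p=\xi^1(x)$ with a sequence $\gamma_n\in\Gamma_0$ as in Lemma~\ref{lemmaofquasigeodesics}, so that $\gamma_n b_0\to x$ and $x\in\hat{\mathcal{O}}_R(b_0,\gamma_n b_0)$ for all $n$.

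The plan is to blow up around $p$ using $\rho(\gamma_n)^{-1}$. Lemma~\ref{lemmaofellipseandshadows} sandwiches the shadows $\xi^1(\hat{\mathcal{O}}_R(b_0,\gamma_n b_0))$ between the ellipsoids $\mathcal{E}(\rho(\gamma_n),1/C)$ and $\mathcal{E}(\rho(\gamma_n),C)$, all containing $p$ or near it. Density one at $p$, together with the fact that these ellipsoids are comparable to neighborhoods of $p$ in volume, should give
\[
m_{vol}\big(\mathcal{E}(\rho(\gamma_n),C)\setminus\Lambda^1(\Gamma)\big)=o\big(m_{vol}(\mathcal{E}(\rho(\gamma_n),C))\big).
\]
This is the delicate point. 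Lebesgue density is usually stated for balls, and eccentric ellipsoids are not a differentiation basis in general. To get around this, I would instead use the Patterson--Sullivan measure $\mu$ with $\delta^{\phi_d}=1$. The estimates in the proof of Proposition~\ref{propositionofwhenupperboundisattained} show that $\mu$ and $m_{vol}|_{\Lambda^1(\Gamma)}$ are comparable on shadows, and hence, via the Vitali Lemma~\ref{lemmaofshadowvitali}, mutually absolutely continuous with bounded density. Then the density theorem relative to the doubling-like shadow basis for $\mu$, which holds by the shadow lemma (Theorem~\ref{shadowlemma}), gives density one of $\Lambda^1(\Gamma)$ in $m_{vol}$ along shadows at $\mu$-a.e.\ point. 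I expect this to be the main obstacle.

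Next, we apply $\rho(\gamma_n)^{-1}$. By Lemma~\ref{lemmaofmapofellipse}, $\rho(\gamma_n)^{-1}\mathcal{E}(\rho(\gamma_n),C)=\mathcal{B}(V_{d-1}(\rho(\gamma_n))^\perp, C)$. This is a fixed-size ball, and its center converges, after passing to a subsequence, by the Cartan property of Lemma~\ref{lemmaofcartanproperty}. The Radon--Nikodym cocycle \eqref{equationofcocycleformeasure} is comparable to a constant times $1/m_{vol}(\mathcal{E})$ on the ellipsoid, since $\|gv\|/\|v\|\asymp\sigma_1$ there. Hence the relative density transfers: the $\Gamma$-invariant closed set $\Lambda^1(\Gamma)$ has relative complement volume tending to $0$ in balls $\mathcal{B}(q_n,C)$ with $q_n\to q$. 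Since $\Lambda^1(\Gamma)$ is closed and invariant, it contains the full ball $\mathcal{B}(q,C)$ up to measure zero, and therefore contains it entirely by closedness.

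Finally, $\Lambda^1(\Gamma)$ contains an open set. Choosing distinct $x,y$ in that open ball close enough together, and passing if needed to a smaller ball, the whole projective line $\mathrm{Span}(x,y)$ is obtained as follows. An open subset $W\subset\Lambda^1(\Gamma)$ is pushed by the strongly dynamics preserving property (Lemma~\ref{lemmaofcartanproperty}) under suitable $\rho(\gamma_n)$ with repeller hyperplane avoiding a given projective line segment. Since $\Lambda^1(\Gamma)$ is the minimal closed invariant set, containing an open set forces $\Lambda^1(\Gamma)$ to be open and closed, hence all of $\mathbb{RP}^{d-1}$. In particular it contains $\mathrm{Span}(x,y)$ for any distinct $x,y$.
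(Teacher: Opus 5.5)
There is a genuine gap at exactly the step you flag, and the Patterson--Sullivan detour does not close it.

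You need $m_{vol}\big(\mathcal{E}(\rho(\gamma_n),C)\setminus\Lambda^1(\Gamma)\big)=o\big(m_{vol}(\mathcal{E}(\rho(\gamma_n),C))\big)$, which is a statement about the complement of the limit set. Both $\mu$ and $m_{vol}|_{\Lambda^1(\Gamma)}$ are carried by $\Lambda^1(\Gamma)$, and the shadows $\xi^1(\hat{\mathcal{O}}_R(b_0,\gamma b_0))$ are subsets of $\Lambda^1(\Gamma)$. So any density theorem for $\mu$ along shadows cannot see $\mathbb{RP}^{d-1}\setminus\Lambda^1(\Gamma)$; ``density one of $\Lambda^1(\Gamma)$ along shadows'' is a tautology. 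The shadow comparison actually gives only $m_{vol}(\mathcal{E}(\rho(\gamma_n),C)\cap\Lambda^1(\Gamma))\ge c\,m_{vol}(\mathcal{E}(\rho(\gamma_n),C))$ for a fixed $c>0$. After pulling back, this only says that $\Lambda^1(\Gamma)$ has positive volume in $\mathcal{B}(q,C)$.

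In fact, by invariance and the bounded Jacobian distortion you correctly invoke, density tending to one along $\mathcal{E}(\rho(\gamma_n),C)$ is equivalent (along the subsequence with $q_n\to q$) to $\mathcal{B}(q,C)\subset\Lambda^1(\Gamma)$. It is the conclusion of your step, not an input to it. Ordinary Lebesgue density at $p$ does not help either. For $d\ge 3$, the volume $\asymp\sigma_1(\rho(\gamma_n))^{-d}$ of these ellipsoids is a vanishing fraction of the volume of the ball around $p$ of radius comparable to $\frac{\sigma_2}{\sigma_1}(\rho(\gamma_n))$ that contains them. No Vitali-type lemma is available for these ambient ellipsoids; Roblin's lemma concerns shadows only. (For $d=2$ the ellipsoids are arcs comparable to balls, and your argument does go through.)

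The idea you are missing is to use round balls, where the Lebesgue density point genuinely applies, but with radius equal to the \emph{shortest} axis. One has $\mathcal{B}\big(p,C_0\frac{\sigma_d}{\sigma_1}(\rho(\gamma_n))\big)\subset\mathcal{E}(\rho(\gamma_n),3C_0)$, so $\rho(\gamma_n)^{-1}$ has bounded Jacobian distortion there and density one passes to the preimage. That preimage is not a ball but a needle:
its long axis has length bounded below by some $C_2(C_0)$ with $C_2(C_0)\to\infty$, and it lies on $\mathrm{Span}\big(\rho(\gamma_n)^{-1}p,\,U_1(\rho(\gamma_n)^{-1})\big)$.
This line converges to $\mathrm{Span}(\xi^1(x),\xi^1(y))$, where $x=\lim\gamma_n^{-1}p'$ and $y=\lim\gamma_n^{-1}b_0$ are distinct because $p'$ lies in all the shadows.
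The other axes are of order $C_0\frac{\sigma_d}{\sigma_i}(\rho(\gamma_n))\to 0$ for $i\le d-1$.

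Density one in these needles, together with closedness of $\Lambda^1(\Gamma)$, puts a long segment of that line into $\Lambda^1(\Gamma)$; letting $C_0\to\infty$ gives the whole line. This is precisely why the proposition only claims a line. An open subset is obtained only afterwards, via topological transitivity on pairs and convexity.

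Your last paragraph (an open subset forces $\Lambda^1(\Gamma)=\mathbb{RP}^{d-1}$) is fine. The ergodicity argument for conicality is unnecessary, since every point of $\Lambda^1(\Gamma_0)$ is conical by Lemma~\ref{lemmaofquasigeodesics}.
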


\begin{proof}
From Theorem~\ref{theoremofrealization}, we may assume that $\Gamma$ is the image of an Anosov representation $\rho$ from a regular convex cocompact subgroup $\Gamma_0$ with respect to $(V,\Omega,\mathcal{C})$ with limit map $\xi$. Fix $b_0 \in \mathcal{C}$, and pick large enough constants $R>0, a>1, A>0$ satisfying the requirements of Lemma~\ref{lemmaofquasigeodesics},  and then pick $C>0$ so that $R,C$ satisfy Lemma~\ref{lemmaofellipseandshadows}~(2).

If $\mathrm{dim}_{H} \Lambda^1(\Gamma) = d - 1$, then by Proposition~\ref{propositionofwhenupperboundisattained}, there exists a Lebesgue density point $p\in \Lambda^1(\Gamma)$. Let $p' \in \Lambda^1(\Gamma_0)$ be such that $\xi^1(p') = p$. By Lemma~\ref{lemmaofquasigeodesics}, we can pick a sequence $(\gamma_n)_{n\in \mathbb{Z}^{>0}}$ representing an $(a,A)$-quasi-geodesic ray such that $\gamma_n b_0 \to p'$ as $n \to +\infty$ and
\[
[b_0,p') \subset \bigcup_{n \in \mathbb{Z}^{>0}} B_{d_{\Omega}}(\gamma_n b_0, R).
\]

By passing to a subsequence (note that this subsequence may no longer represent a quasi-geodesic), we may assume that $\gamma_n^{-1} b_0\to y\in \Lambda^1(\Gamma_0)$, $\gamma_n^{-1} p'\to x\in \Lambda^1(\Gamma_0)$, and $p'\in \hat{\mathcal{O}}_R(b_0,\gamma_n b_0)$. Note that the last condition implies that $d_{\Omega}([x,y],b_0)\le R$, and thus $x\neq y$. We will show that $\mathrm{Span}(\xi^1(x),\xi^1(y))\subset \Lambda^1(\Gamma)$.

Fix $C_0 > C$. By further passing to a subsequence, we may assume from Lemma~\ref{lemmaofellipseandshadows} that
\[
    \xi^1( \hat{\mathcal{O}}_R(b_0,\gamma_n b_0)) \subset \mathcal{E}(\rho(\gamma_n), C_0).
\]
Since $p\in \mathcal{E}(\rho(\gamma_n), C_0)$, which is the projectivization of an ellipse in the affine chart with minor axis of length $C_0 \frac{\sigma_d}{\sigma_1}(\rho(\gamma_n))$, we may further assume that for sufficiently large $n$,
\begin{equation}\label{equation1inconvexityproof}
    \mathcal{B}\left(p, C_0 \frac{\sigma_d}{\sigma_1}(\rho(\gamma_n))\right) \subset \mathcal{E}(\rho(\gamma_n), 3C_0).
\end{equation}
Since $p$ is a Lebesgue density point of $\Lambda^1(\Gamma)$, the density 
\begin{equation}\label{equationofdensityintheproofofconvexity}
D_{C_0,n} = \frac{m_{vol}\left( \mathcal{B}\left(p, C_0 \frac{\sigma_d}{\sigma_1}(\rho(\gamma_n))\right)\cap \Lambda^1(\Gamma)\right)}{m_{vol}\left( \mathcal{B}\left(p, C_0 \frac{\sigma_d}{\sigma_1}(\rho(\gamma_n))\right)\right)}    
\end{equation}
satisfies $\lim_{n\to \infty} D_{C_0,n} = 1$.

Let $\mathcal{E}_{C_0,n} = \rho(\gamma_n)^{-1} \mathcal{B}\left(p, C_0 \frac{\sigma_d}{\sigma_1}(\rho(\gamma_n))\right)$. From Lemma~\ref{lemmaofmapofellipse} and Equation~\ref{equation1inconvexityproof}, we have
\[
\mathcal{E}_{C_0,n} \subset \mathcal{B}((V_{d-1}(\rho(\gamma_n)))^{\perp}, 3C_0).
\]
Now we control the Jacobian of $\rho(\gamma_n)$ (with respect to $m_{vol}$) on this latter set. By Equation~\ref{equationofcocycleformeasure}, if $g=\rho(\gamma_n)$ and $q=[v]$, then
		\[
		\mathrm{Jac}\, g(q)=\left(\frac{\|v\|}{\|gv\|}\right)^d.
		\]
		Write $g=u(g)a(g)v(g)$. For $q\in \mathcal{B}((V_{d-1}(g))^{\perp},3C_0)$, after applying $v(g)$ we may choose a unit representative of $[v]$ whose $e_1$-coordinate is bounded from below by a positive constant depending only on $C_0$. Hence $\|gv\|$ is comparable to $\sigma_1(g)\|v\|$, up to a multiplicative constant uniform in $g$ and in $q$. Thus there exists a constant $C_1=C_1(C_0)>1$, independent of $n$, such that
		\[
		\frac{\sup_{x\in \mathcal{B}((V_{d-1}(\rho(\gamma_n)))^{\perp}, 3C_0)} \mathrm{Jac} \rho(\gamma_n)(x)}{\inf_{x\in \mathcal{B}((V_{d-1}(\rho(\gamma_n)))^{\perp}, 3C_0)} \mathrm{Jac} \rho(\gamma_n)(x)}\le C_1,
		\]


As a result, Equation~\ref{equationofdensityintheproofofconvexity} implies that the pullback density
		\begin{equation}\label{equationofpullbackdensityintheproofofconvexity}
			D'_{C_0,n} = \frac{m_{vol}\left( \mathcal{E}_{C_0,n}\cap \Lambda^1(\Gamma)\right)}{m_{vol}\left( \mathcal{E}_{C_0,n}\right)}    
		\end{equation}
satisfies $\lim_{n\to \infty} D'_{C_0,n} = 1$.

On the other hand, from the Cartan property in Lemma~\ref{lemmaofcartanproperty}, we have $U_1(\rho(\gamma_n))\to p$ and $V_{d-1}(\rho(\gamma_n))\to \xi^{d-1}(y)$ which are transverse pairs, so the maximum expansion rate of $\rho(\gamma_n)^{-1}$ in a neighborhood of $p$ can be estimated as $\frac{\sigma_1}{\sigma_d}(\rho(\gamma_n))$ up to a uniformly bounded error. Multiplying this expansion rate by the initial radius $C_0 \frac{\sigma_d}{\sigma_1}(\rho(\gamma_n))$, the length of the principal axis of $\mathcal{E}_{C_0,n}$ is bounded from below by a constant $C_2(C_0)$ which satisfies $\lim_{C_0\to \infty} C_2(C_0) = \infty$ (and it is also bounded above by a constant $C_3(C_0)$). In any other direction, the expansion rate is proportional to $\frac{\sigma_1}{\sigma_i}(\rho(\gamma_n))$ for some $i \le d-1$. Since the ratio of the maximum expansion rate to the expansion rates in other directions goes to infinity (as for $i\le d-1$, $\frac{\sigma_i}{\sigma_d}(\rho(\gamma_n)) \to \infty$), the lengths of all other axes of $\mathcal{E}_{C_0,n}$ converge to 0. Therefore, $\mathcal{E}_{C_0,n}$ is an ellipse whose principal axis intersects $\partial \mathcal{B}(x,\frac{C_2(C_0)}{3})$ twice and all its other axes converge to 0.

Moreover, the principal axis of the ellipsoid points to the direction of maximum expansion, i.e., $U_1(\rho(\gamma_n)^{-1})$, and note that the ellipsoid also contains $\rho(\gamma_n)^{-1}(p) $. Since $\rho(\gamma_n)^{-1}(p) = \xi^1(\gamma_n^{-1} p') \to \xi^1(x)$ and $U_1(\rho(\gamma_n)^{-1}) \to \xi^1(y)$, this principal axis converges to the line $\mathrm{Span}(\xi^1(x),\xi^1(y))$. 

The limit $\lim_{n\to \infty} D'_{C_0,n} = 1$ shows that $\mathcal{B}(x,\frac{C_2(C_0)}{3})\cap \mathrm{Span}(\xi^1(x),\xi^1(y))\subset \Lambda^1(\Gamma)$. Indeed, if this were not the case, there would exist an open ball in $\mathcal{B}(x,\frac{C_2(C_0)}{3})$ intersecting $\mathrm{Span}(\xi^1(x),\xi^1(y))$ but disjoint from $\Lambda^1(\Gamma)$. As $\mathcal{E}_{C_0,n}$ collapses to the principal axis, this open ball would eventually capture a strictly positive, uniform proportion of the volume of $\mathcal{E}_{C_0,n}$. This missing volume would make $\limsup_{n\to \infty} D'_{C_0,n}<1$, which is a contradiction. Since $C_0$ is arbitrary and $\lim_{C_0\to \infty} C_2(C_0 ) = \infty$, we conclude that $\mathrm{Span}(\xi^1(x),\xi^1(y))\subset \Lambda^1(\Gamma)$.

\end{proof}

\begin{proof}[Proof of Theorem~\ref{theoremoffulldimension}]
Let $\mathcal{OL}$ be the set of distinct pairs $(x,y)$ in $\Lambda^1(\Gamma)\times \Lambda^1(\Gamma)$ such that $\mathrm{Span}(x,y)\subset \Lambda^1(\Gamma)$. Clearly, $\mathcal{OL}$ is $\Gamma$-invariant, and it is easy to check that it is closed in the space of distinct pairs. Since Proposition~\ref{propositionofcontainingaline} shows that it is non-empty, and $\Gamma$ acts topologically transitively on the space of distinct pairs (for example, one can deduce this from the ergodicity of the product measure in Theorem~\ref{ergodicities}), $\mathcal{OL}$ must be the entire space of distinct pairs. This implies that $\Lambda^1(\Gamma)$ is convex in any affine chart.

By Proposition~\ref{propositionofwhenupperboundisattained}, $\Lambda^1(\Gamma)$ is closed with positive volume, so it has non-empty interior. Suppose it contains an open set $U$. Pick $\gamma\in \Gamma$ such that the attracting fixed point $\gamma^+\in \mathbb{RP}^{d-1}$ belongs to $U$, and let $H_{\gamma^-}$ be the repelling fixed hyperplane of $\gamma$. It is straightforward to see that $\bigcup_{n = 0}^{\infty} \gamma^{-n} U$ exhausts the affine chart $\mathbb{RP}^{d-1} \setminus H_{\gamma^{-}}$. Since $\Lambda^1(\Gamma)$ is $\Gamma$-invariant and closed, we conclude that $\Lambda^1(\Gamma) = \mathbb{RP}^{d-1}$. 

By Kapovich--Benakli~\cite[Theorem~4.4]{kapovich2002boundarieshyperbolicgroups}, since $\Lambda^1(\Gamma)$ is a topological manifold, it must be homeomorphic to a sphere. Because $\Lambda^1(\Gamma) = \mathbb{RP}^{d-1}$, this forces $d = 2$. Consequently, $\Gamma$ is an Anosov subgroup of $\mathsf{PGL}(2, \mathbb{R})$ acting on $\mathbb{D}$ with full limit set, which implies that $\Gamma$ must be a cocompact lattice.
\end{proof}

\section{The Barbot Component}
In this section, we assume that $\Gamma_0$ is a cocompact lattice in $\mathsf{PSL}(2,\mathbb{R})$ acting on the hyperbolic disk $\mathbb{D}$, and $\rho:\Gamma_0\to \mathsf{PGL}(3,\mathbb{R})\cong \mathsf{PSL}(3,\mathbb{R})$ is a projective Anosov representation whose image is irreducible and has limit map $\xi$. Let $\Gamma = \rho(\Gamma_0)$. We are going to prove:

\begin{theorem}\label{equivalenceofdim1}
Based on the fixed notation, the following are equivalent:
\begin{enumerate}
    \item $\dim_H(\xi^1(\partial\mathbb{D})) = 1$.
    \item $\alpha(\Gamma) = 1$.
    \item $\delta^{\alpha_1}(\Gamma) = 1$.
    \item $\xi^1(\partial\mathbb{D})$ is a rectifiable curve.
    \item $\xi^1(\partial\mathbb{D})$ is a $C^1$ curve.
    \item $\rho$ is a Hitchin representation.
\end{enumerate}
\end{theorem}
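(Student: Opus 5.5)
I plan to prove the equivalences as the cycle of implications $(6)\Rightarrow(5)\Rightarrow(4)\Rightarrow(1)\Rightarrow(2)\Rightarrow(3)\Rightarrow(6)$, together with the tool that for $d=3$ the affine series on $s\in[0,1]$ is exactly the $\alpha_1$-Poincar\'e series. Indeed, for $p=1$ and $s\in[0,1]$ one has $\Phi^{\mathrm{Aff}}_\Gamma(s)=\sum_\gamma (\sigma_2/\sigma_1)^s=\mathcal{Q}^{\alpha_1}_\Gamma(s)$. Hence $\alpha(\Gamma)\le 1$ if and only if $\delta^{\alpha_1}(\Gamma)\le 1$, and in that range the two exponents coincide. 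This gives $(2)\Leftrightarrow(3)$ directly.

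The easy implications go as follows.

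$(6)\Rightarrow(5)$: by Labourie and Choi--Goldman, a Hitchin representation in $\mathsf{PGL}(3,\mathbb{R})$ divides a strictly convex domain with $C^1$ boundary, and $\xi^1(\partial\mathbb{D})$ is that boundary. (Hitchin representations are $(1,1,2)$-hyperconvex, and the curve is $C^{1+\alpha}$.)

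$(5)\Rightarrow(4)$: a $C^1$ closed curve is rectifiable.

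$(4)\Rightarrow(1)$: a rectifiable curve has $\dim_H\le 1$, and since it is a nondegenerate continuum it also has $\dim_H\ge1$.

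$(1)\Rightarrow(2)$: Theorem~\ref{pswtheomrem} gives $\alpha(\Gamma)\ge1$. For the reverse bound I would use Li--Pan--Xu's equality $\dim_H=\alpha(\Gamma)$ for irreducible $d=3$, which gives $\alpha(\Gamma)=1$. Alternatively, I would mimic Lemma~\ref{criticalexponentdomination} and Proposition~\ref{propositionofexponentupperbound}: compare the affine series at $s=1+\epsilon$ with the Poincar\'e series of $\phi=\omega_1-\omega_2$-type functionals and use shadows and 1-dimensional Hausdorff measure. I would rely on the citation.

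The real content is $(3)\Rightarrow(6)$, and possibly $(3)\Rightarrow(4)$ as an intermediate step. The plan mimics Section~5, replacing $m_{vol}$ by one-dimensional Hausdorff measure $\mathcal{H}^1$ on the curve. Assume $\delta^{\alpha_1}(\Gamma)=1$ and let $\mu$ be the $\alpha_1$-Patterson--Sullivan measure, noting that $\alpha_1\in\mathfrak{a}_\theta^*$ for $d=3$ since $\alpha_1=2\omega_1-\omega_2$. By Theorem~\ref{shadowlemma}, $\mu(\text{shadow of }\gamma)\asymp \sigma_2/\sigma_1(\rho(\gamma))$. By Lemma~\ref{lemmaofellipseandshadows}, the shadow lies in the ellipse $\mathcal{E}(\rho(\gamma),C)$, whose major axis has length $\asymp\sigma_2/\sigma_1$.

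\textbf{Step A.} I want to show that $\xi^1(\partial\mathbb D)$ has positive, locally finite $\mathcal{H}^1$ measure comparable to $\mu$. The upper bound should come from Vitali covering by shadows of a fixed scale; the diameters sum to $\lesssim\sum\mu=1$, giving $\mathcal{H}^1<\infty$ and rectifiability, i.e.\ $(3)\Rightarrow(4)$. The lower bound is that the arc inside a shadow has diameter $\gtrsim\sigma_2/\sigma_1$, using $\mathcal{E}(\rho(\gamma),1/C)\subset$ shadow together with the fact that the arc traverses the ellipse along its major axis. Together these give $\mu\asymp\mathcal{H}^1$.

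\textbf{Step B.} Take a point of differentiability (tangent line) of the rectifiable curve that is also a density point. Then pull back by $\rho(\gamma_n)^{-1}$ along a quasi-geodesic ray, as in Proposition~\ref{propositionofcontainingaline}. The blown-up pieces of curve converge to a segment of the line $\mathrm{Span}(\xi^1(x),\xi^1(y))$, which I must show lies in the limit set. I then want to rule this out, or rather exploit it: by topological transitivity on pairs (Theorem~\ref{ergodicities}), every pair of limit points would be joined by a segment in $\Lambda^1$. This is impossible for a circle in $\mathbb{RP}^2$ unless the structure is degenerate.

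I expect this to be the main obstacle. The correct contradiction should instead come from the tangent direction: rectifiability forces the tangent at a typical point to coincide with $\xi^2$, the limit hyperplane. This makes $\xi^1$ and $\xi^2$ compatible, in the sense that $\xi^2(x)$ is tangent to the curve at $x$ for $\mu$-almost every $x$, and by continuity and ergodicity for all $x$. Then the curve is $C^1$ with osculating line $\xi^2$, so the representation is $(1,1,2)$-hyperconvex. By the Barbot/Guichard classification for $\mathsf{PGL}(3,\mathbb{R})$ Anosov surface groups, an irreducible representation whose curve is $C^1$ with tangent $\xi^2$ bounds a convex domain and hence is Hitchin; this gives $(5)\Rightarrow(6)$ and closes the cycle.
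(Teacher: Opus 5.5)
\textbf{Main gap: $(2)\Rightarrow(3)$.} You say $(2)\Leftrightarrow(3)$ follows directly from $\Phi^{\mathrm{Aff}}_\Gamma=\mathcal{Q}^{\alpha_1}_\Gamma$ on $[0,1]$. Only half of this is true, and your cycle needs the other half.

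From $\alpha(\Gamma)=1$, that identity gives divergence of $\mathcal{Q}^{\alpha_1}_\Gamma(s)$ for $s<1$, i.e.\ $\delta^{\alpha_1}(\Gamma)\ge 1$, and nothing more. For the upper bound you need information at $s>1$. There $\Phi^{\mathrm{Aff}}_\Gamma(s)=\sum\frac{\sigma_2}{\sigma_1}\bigl(\frac{\sigma_3}{\sigma_1}\bigr)^{s-1}\le\mathcal{Q}^{\alpha_1}_\Gamma(s)$. This inequality is what makes $(3)\Rightarrow(2)$ easy, but it goes the wrong way for $(2)\Rightarrow(3)$: convergence of the affine series past $1$ says nothing about $\delta^{\alpha_1}(\Gamma)$.

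The missing ingredient, which is exactly the content of the paper's $(2)\Rightarrow(3)$, is a comparison of the two root gaps. For a projective Anosov group, $\log\frac{\sigma_2}{\sigma_3}(\rho(\gamma))\le c\log\frac{\sigma_1}{\sigma_2}(\rho(\gamma))+C$: the left side is at most linear in word length by submultiplicativity, and the right side is at least linear by the Anosov property. Hence $\Phi^{\mathrm{Aff}}_\Gamma(s)\ge e^{-C(s-1)}\mathcal{Q}^{\alpha_1}_\Gamma\bigl(s+c(s-1)\bigr)$ for $s>1$. So $\delta^{\alpha_1}(\Gamma)>1$ would force $\alpha(\Gamma)\ge\frac{\delta^{\alpha_1}(\Gamma)+c}{1+c}>1$. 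Without this, your cycle breaks at $(2)\Rightarrow(3)$.

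\textbf{The rest of $(3)\Rightarrow(6)$.} This part is essentially the paper's chain $(3)\Rightarrow(4)\Rightarrow(5)\Rightarrow(6)$.

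Your Vitali/$\mathcal{H}^1$ version of $(3)\Rightarrow(4)$ works. Shadows of radius $5R$ have diameter $\lesssim\frac{\sigma_2}{\sigma_1}$, which is $\asymp\mu$ of the disjoint $R$-shadows, and a Jordan curve with finite $\mathcal{H}^1$ is rectifiable. This is a fine alternative to the paper's argument, which chooses $z_{x,y}$ so that the shadow is exactly the arc $(x,y)_{arc}$ and then bounds inscribed polygon lengths directly. The lower bound $\mu\asymp\mathcal{H}^1$ (which you only gesture at) and Step B are not needed.

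However, ``$C^1$ with tangent $\xi^2$, so $(1,1,2)$-hyperconvex'' is a non sequitur. For $d=3$, $(1,1,2)$-hyperconvexity means no three points of $\xi^1(\partial\mathbb{D})$ are collinear, and this does not follow from tangency data alone. The paper gets it in three steps:
\begin{enumerate}
\item upgrade to $C^{1,\alpha}$ using H\"older continuity of $\xi^2$;
\item apply Zhang--Zimmer (irreducible with $C^{1,\alpha}$ limit curve implies hypertransverse);
\item apply Guichard's characterization.
\end{enumerate}
Alternatively, cite Barbot's Corollary~7.2 outright.

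You should also spell out the tangent identification. Strong dynamics preservation gives $\rho(\gamma_n)\ell_{b,c}\to\xi^2(x)$, and these lines are secants through curve points tending to $\xi^1(x)$. The passage from almost every point to every point uses continuity of $\xi^2$ and absolute continuity of the arc-length parametrization, not ergodicity.
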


Since the topological curve $\xi^1(\partial \mathbb{D})$ always has a Hausdorff dimension of at least one, we have:
\begin{corollary}\label{strictdimensionforbarbot}
Based on the fixed notation, if $\rho$ is a Barbot representation (see Barbot~\cite{barbot1}), then $\dim_H(\xi^1(\partial\mathbb{D})) > 1$.
\end{corollary}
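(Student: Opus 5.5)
The corollary follows from Theorem~\ref{equivalenceofdim1} together with one elementary lower bound, so the plan is short.

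First I record the lower bound. Since $\rho$ is projective Anosov with limit map $\xi$, the map $\xi^1:\partial\mathbb{D}\to\Lambda^1(\Gamma)$ is a homeomorphism onto its image. Hence $\xi^1(\partial\mathbb{D})$ is a topological circle in $\mathbb{RP}^2$; in particular it is a non-degenerate continuum. Any connected metric space containing two points at distance $r>0$ has one-dimensional Hausdorff measure at least $r$. One sees this by projecting via $z\mapsto d_{\angle}(z,z_0)$, which is $1$-Lipschitz and has connected image containing $[0,r]$; Lipschitz maps do not increase $\mathcal{H}^1$. Therefore $\dim_H(\xi^1(\partial\mathbb{D}))\ge 1$.

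Next I rule out equality. Suppose $\dim_H(\xi^1(\partial\mathbb{D}))=1$. By the implication (1)$\Rightarrow$(6) of Theorem~\ref{equivalenceofdim1}, $\rho$ is then a Hitchin representation. A Barbot representation, in the sense of Barbot~\cite{barbot1}, is by definition a deformation of the reducible representation $\Gamma_0\to\mathsf{SL}(2,\mathbb{R})\hookrightarrow\mathsf{SL}(3,\mathbb{R})$. It lies in the Barbot component, which is a connected component of the space of Anosov representations distinct from the Hitchin component. Its irreducibility is guaranteed by the standing hypothesis of this section.

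Hence $\rho$ cannot be Hitchin, which is a contradiction, so the inequality is strict. The only point needing care is the disjointness of the Barbot and Hitchin components. This is part of the definition and classification cited from Barbot, not something to reprove. Everything substantive is carried by Theorem~\ref{equivalenceofdim1}.
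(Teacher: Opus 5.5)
Your proposal is correct and follows the paper's argument: the paper likewise combines the bound $\dim_H \ge 1$ for a topological circle with the implication (1)$\Rightarrow$(6) of Theorem~\ref{equivalenceofdim1} and the fact that Barbot representations are not Hitchin. Your explicit $\mathcal{H}^1$ lower bound and your remark that irreducibility comes from the standing hypotheses are correct elaborations of what the paper leaves implicit; the irreducibility is genuinely needed, since the reducible Barbot representation has a projective line as its limit set and hence dimension exactly $1$.
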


\begin{proof}[Proof of Theorem~\ref{equivalenceofdim1}]
We will prove the theorem by showing the following cycle of implications. 

\textbf{(1) $\implies$ (2):} See Li--Pan--Xu~\cite[Theorem~1.2]{lipanxu}.

\textbf{(2) $\implies$ (3):} We are given $\alpha(\Gamma) = 1$.
First, by definition, for $s \in (0, 1]$, the affine series coincides with the $\alpha_1$-Poincar\'e series: $\Phi_{\Gamma}^{\mathrm{Aff}}(s) = \mathcal{Q}_{\Gamma}^{\alpha_1}(s)$. Since $\alpha(\Gamma) = 1$, the affine series $\Phi_{\Gamma}^{\mathrm{Aff}}(s)$ must diverge for $s<1$. This implies $\mathcal{Q}_{\Gamma}^{\alpha_1}(s)$ also diverges for $s<1$, so by definition of the critical exponent, $\delta^{\alpha_1}(\Gamma) \ge 1$.

Next, we show that $\delta^{\alpha_1}(\Gamma) > 1$ leads to a contradiction. 
Fix $b_0\in \mathbb{D}$ and let $d_{\mathbb{D}}$ denote the hyperbolic metric on $\mathbb{D}$. Since $\rho$ is Anosov, there exist constants $a>1$ and $A>0$ such that for any $\gamma \in \Gamma_0$:
\[
\frac{1}{a} d_{\mathbb{D}}(b_0,\gamma b_0)-A\le \log \frac{\sigma_1}{\sigma_2}(\rho(\gamma))\le a d_{\mathbb{D}}(b_0,\gamma b_0)+A,
\]
\[
\frac{1}{a} d_{\mathbb{D}}(b_0,\gamma b_0)-A\le \log \frac{\sigma_2}{\sigma_3}(\rho(\gamma))\le a d_{\mathbb{D}}(b_0,\gamma b_0)+A
\]
(see Canary--Zhang--Zimmer~\cite[Theorem~1.1]{CaZhZi2}).
This implies there exist constants $c>1$ and $C>0$ such that for any $\gamma \in \Gamma_0$,
\[
\log \frac{\sigma_2}{\sigma_3}(\rho(\gamma)) \le c \log \frac{\sigma_1}{\sigma_2}(\rho(\gamma)) + C,
\]
which yields the lower bound for the contraction ratio:
\[
\frac{\sigma_3}{\sigma_2}(\rho(\gamma)) \ge e^{-C} \left(\frac{\sigma_2}{\sigma_1}(\rho(\gamma))\right)^c.
\]
For $s>1$, we have
\begin{align*}
\Phi_{\Gamma}^{\mathrm{Aff}}(s) &= \sum_{\gamma \in \Gamma_0} \frac{\sigma_2}{\sigma_1}(\rho(\gamma)) \left(\frac{\sigma_3}{\sigma_1}(\rho(\gamma))\right)^{s-1} \\
&= \sum_{\gamma \in \Gamma_0} \left(\frac{\sigma_2}{\sigma_1}(\rho(\gamma))\right)^s \left(\frac{\sigma_3}{\sigma_2}(\rho(\gamma))\right)^{s-1} \\
&\ge \sum_{\gamma \in \Gamma_0} \left(\frac{\sigma_2}{\sigma_1}(\rho(\gamma))\right)^s \left( e^{-C} \left(\frac{\sigma_2}{\sigma_1}(\rho(\gamma))\right)^c \right)^{s-1} \\
&= e^{-C(s-1)} \sum_{\gamma \in \Gamma_0} \left(\frac{\sigma_2}{\sigma_1}(\rho(\gamma))\right)^{s + c(s-1)} \\
&= e^{-C(s-1)} \mathcal{Q}_{\Gamma}^{\alpha_1}\big(s + c(s-1)\big).
\end{align*}
This inequality shows that the affine series $\Phi_{\Gamma}^{\mathrm{Aff}}(s)$ diverges if the Poincar\'e series $\mathcal{Q}_{\Gamma}^{\alpha_1}\big(s + c(s-1)\big)$ diverges.

Now, assume for contradiction that $\delta^{\alpha_1}(\Gamma) > 1$.
This means $\mathcal{Q}_{\Gamma}^{\alpha_1}(s')$ diverges for all $s' < \delta^{\alpha_1}(\Gamma)$.
Thus, $\Phi_{\Gamma}^{\mathrm{Aff}}(s)$ must diverge for all $s$ satisfying $s + c(s-1) < \delta^{\alpha_1}(\Gamma)$.
This condition is equivalent to $s < \frac{\delta^{\alpha_1}(\Gamma) + c}{1+c}$.
The affinity exponent $\alpha(\Gamma)$ must therefore satisfy $\alpha(\Gamma) \ge \frac{\delta^{\alpha_1}(\Gamma) + c}{1+c}>1$. This implies $\alpha(\Gamma) > 1$, which contradicts our hypothesis $\alpha(\Gamma) = 1$.

\textbf{(3) $\implies$ (4):}  Fix $b_0\in \mathbb{D}$. For any $z\in \mathbb{D}$ and $R>0$, recall that the shadow from $b_0$ to $z$ of radius $R$ is defined as
\[
\mathcal{O}_{R}(b_0,z) := \{y\in \partial \mathbb{D} \mid [b_0,y)\cap \overline{B}_{d_{\mathbb{D}}}(z,R)\neq \emptyset\},
\]
where $[b_0,y)$ denotes the geodesic ray.

Note that for projective Anosov subgroups in $\mathsf{PGL}(3,\mathbb{R})$, we have $\mathfrak{a}_{\theta}^* = \mathfrak{a}^*$, where $\theta = \{1,2\}$, so $\alpha_1\in \mathfrak{a}_{\theta}^* $. From Theorem~\ref{theoremofexistenceofpattersonsullivan}, the $\alpha_1$ Patterson--Sullivan measure exists, and we denote its pullback on $\partial \mathbb{D}$ by $\mu$. The shadow lemma (Theorem~\ref{shadowlemma}) can be stated as: for any $r\ge 0$ and large enough $R>0$, there exists a constant $C>1$ such that for any $z\in \mathbb{D}$ and $\gamma\in \Gamma_0$, if $d_{\mathbb{D}}(z,\gamma b_0)\le r$, then
\[
\frac{1}{C}\frac{\sigma_2}{\sigma_1}(\rho(\gamma)) \le \mu (\mathcal{O}_R(b_0,z))\le C \frac{\sigma_2}{\sigma_1}(\rho(\gamma)).
\]

We also invoke the diameter estimate for shadows. For any $z\neq b_0$, let $l_{b_0}(z)$ be the intersection point between $[b_0, z)$ and $\partial \mathbb{D}$. Recall that $d_{\angle}$ is the angular metric on $\mathbb{RP}^2$.
\begin{lemma}[{Canary--Zhang--Zimmer~\cite[Theorem~5.1]{CaZhZi}}]\label{shadowdiameter}
Under the fixed notation, for any $r\ge 0$ and $R>0$, there exists a constant $C>1$ with the following property: for any $z\in \mathbb{D}$ and $\gamma\in \Gamma_0$, if $d_{\mathbb{D}}(z,\gamma b_0)\le r$, then
\[
\xi^1(\mathcal{O}_R(b_0,z))\subset B_{d_{\angle}}\left(\xi^1(l_{b_0}(z)), C \frac{\sigma_2}{\sigma_1}(\rho(\gamma))\right).
\]
\end{lemma}

Now, we prove the implication (3) $\implies$ (4). Let $d_{S^1}$ be the angular metric on $\partial \mathbb{D} = S^1$ (with respect to $b_0$) such that the diameter of $\partial \mathbb{D}$ is $\pi$. For any $x,y\in \partial \mathbb{D}$ such that $d_{S^1}(x,y)<\pi$, let $(x,y)_{arc}\subset S^1$ be the short arc connecting them, and let $m_{x,y}$ be the midpoint of this arc.

Fix a large enough $R>0$ so that the shadow lemma holds, and let $r_0$ be the diameter of the closed hyperbolic surface $\mathbb{D}/\Gamma_0$. For any such $x,y$, let $z_{x,y}$ be the unique point such that:
\begin{itemize}
    \item $z_{x,y}\in [b_0,m_{x,y})$.
    \item $B_{d_{\mathbb{D}}}(z_{x,y},R)$ is tangent to $[b_0,x)$ and $[b_0,y)$ simultaneously.
\end{itemize}
Since $r_0$ is the diameter, for any $x,y$ as above, we can find $\gamma_{x,y}\in\Gamma_0$ such that $d_{\mathbb{D}}(z_{x,y},\gamma_{x,y} b_0)\le r_0$. Applying Theorem~\ref{shadowlemma} and Lemma~\ref{shadowdiameter} (with $z=z_{x,y}$ and $\gamma=\gamma_{x,y}$), we know there exists a constant $C>1$ such that for any such $x,y$:
\[
d_{\angle}(\xi^1(x),\xi^1(y))\le C \frac{\sigma_2}{\sigma_1}(\rho(\gamma_{x,y})),
\]
\[
\frac{1}{C} \frac{\sigma_2}{\sigma_1}(\rho(\gamma_{x,y})) \le \mu(\mathcal{O}_{R}(b_0,z_{x,y})) \le C \frac{\sigma_2}{\sigma_1}(\rho(\gamma_{x,y})).
\]
Note that by our construction of $z_{x,y}$, the shadow $\mathcal{O}_{R}(b_0,z_{x,y})$ coincides with the arc $(x,y)_{arc}$. Combining the inequalities, we have
\[
d_{\angle}(\xi^1(x),\xi^1(y))\le C^2\mu((x,y)_{arc}).
\]
So, for any clockwise-ordered tuple $(x_1,\dots,x_n)$ on $\partial \mathbb{D}$ such that $d_{S^1}(x_i,x_{i+1})<\pi$ for $1\le i\le n$ (we set $x_{n+1} = x_1$), we have
\[
\sum_{i = 1}^{n} d_{\angle}(\xi^1(x_i),\xi^1(x_{i+1}))\le C^2\sum_{i = 1}^{n}\mu((x_i,x_{i+1})_{arc}) = C^2 \mu(\partial \mathbb{D}).
\]
Since $\mu(\partial \mathbb{D})=1$, the sum is uniformly bounded. Therefore, the curve is rectifiable.

\vspace{3mm}

\textbf{The remaining implications can indeed be concluded from Barbot~\cite[Corollary~7.2]{barbot1}, but we include a self-contained proof here for completeness.}

\vspace{3mm}

\textbf{(4) $\implies$ (5):} If $\xi^1(\partial \mathbb{D})$ is a rectifiable curve, let $m^*$ denote the 1-dimensional Hausdorff measure on it, and let $m$ be the pullback measure on $\partial \mathbb{D}$. By the properties of rectifiable curves, $\xi^1(\partial \mathbb{D})$ has a tangent line $m^*$ almost everywhere. For $m$ a.e. $x\in \partial \mathbb{D}$, we denote the tangent line at $\xi^1(x)$ as $\ell(x)$, which can be identified with a projective line $\mathbb{RP}^2$.

Our aim is to show that $\ell(x)$ coincides with the continuous dual limit map $\xi^2(x)$ wherever $\ell(x)$ is defined. This directly implies that $\xi^1(\partial \mathbb{D})$ is indeed a $C^1$ curve (note that if an absolutely continuous function has a derivative that coincides almost everywhere with a continuous function, then it is a $C^1$ function).

Fix any $x$ such that $\ell(x)$ is defined. As $\rho$ is irreducible, there exist $a,b,c\in \partial \mathbb{D}\setminus\{x\}$ such that $a,b,c$ are pairwise distinct and $\xi^1(a)\oplus \xi^1(b)\oplus \xi^1(c)$ spans $\mathbb{R}^3$. Let $\ell_{b,c}$ be the projective line $\xi^1(b)\oplus \xi^1(c)$.

Fix $b_0\in \mathbb{D}$, and let $(\gamma_n) \subset \Gamma_0$ be a sequence such that $\gamma_n b_0\to x$ and $\gamma_n^{-1} b_0\to a$. As $\ell_{b,c}$ is transverse to $\xi^1(a)$, and $\xi^1(b), \xi^1(c)$ are both transverse to $\xi^2(a)$, the strongly dynamics preserving property Lemma~\ref{lemmaofcartanproperty} implies that
\[
\rho(\gamma_n)\ell_{b,c} \to \xi^2(x),\quad \rho(\gamma_n)\xi^1(b)\to \xi^1(x),\quad \text{and} \quad \rho(\gamma_n)\xi^1(c)\to \xi^1(x).
\]
However, $\rho(\gamma_n)\ell_{b,c} = \mathrm{Span}(\rho(\gamma_n)\xi^1(b),\rho(\gamma_n)\xi^1(c))$. As $n \to \infty$, this sequence of projective lines (spanned by points on the curve converging to $\xi^1(x)$) converges to the tangent line $\ell(x)$. Thus, we see that $\ell(x) = \xi^2(x)$.

\textbf{(5) $\implies$ (6):} As shown in the proof of the previous implication, for any $x\in \partial \mathbb{D}$, $\xi^2(x)$ coincides with the tangent line of $\xi^1(\partial \mathbb{D})$ at $\xi^1(x)$. Since the limit maps of Anosov representations are Hölder continuous (see, for example, Canary--Zhang--Zimmer~\cite[Theorem~1.3]{CaZhZi2}), $\xi^1(\partial \mathbb{D})$ is thus a $C^{1,\alpha}$ curve for some $\alpha>0$. As $\rho$ is irreducible, by Zhang--Zimmer~\cite[Theorem~1.9]{ZhangZimmerRegularity}, $\xi^1$ satisfies the following hypertransverse property: \emph{For any distinct $x,y,z\in \partial \mathbb{D}$, $\xi^1(x)\oplus \xi^1(y)\oplus\xi^1(z) = \mathbb{R}^3$.} By Guichard~\cite[Théorème 1]{Guichardcharacterization}, this hypertransverse property implies that $\rho$ is Hitchin.

\textbf{(6) $\implies$ (1):} The limit curves of Hitchin representations are known to be $C^1$ (Labourie~\cite[Theorem~1.4]{labourie2005anosovflowssurfacegroups}). Therefore, the Hausdorff dimension is $1$.
\end{proof}

\section{Quasi-Self-Similarity}\label{sectionofregulardistortion}

\subsection{The Non-folding Property}
Let $\Gamma_0$ be a regular convex cocompact subgroup with respect to $(V,\Omega,\mathcal{C})$ and let $\rho:\Gamma_0\to \mathsf{PGL}(d,\mathbb{R})$ be a projective Anosov representation with limit map $\xi$.

\begin{definition}\label{definitionofnonfolding}
 We say $\rho$ has the \emph{non-folding property} if the following holds:

Fix $b_0\in \mathcal{C}$. For any $R>0,r\ge0$, there exists a constant $C>1$ such that for any $x\in \Lambda^1(\Gamma_0)$, $z\in \mathcal{C}$ and $\gamma\in \Gamma_0$ satisfying that $b_0,z,x$ are collinear and $d_{\Omega}(z,\gamma b_0)\le r$,
we have
    \[B_{d_{\angle}}\left(\xi^{1}(x),\frac{1}{C} e^{-\alpha_1(\kappa(\rho(\gamma)))}\right)\cap \xi^1(\Lambda^1(\Gamma_0))\subset \xi^1(\hat{\mathcal{O}}_{R}(b_0,z))\subset B_{d_{\angle}}\left(\xi^{1}(x),C e^{-\alpha_1(\kappa(\rho(\gamma)))}\right). \]

And we say a subgroup $\Gamma \subset \mathsf{PGL}(d,\mathbb{R})$ is a \emph{projective non-folding Anosov subgroup} if it can be realized as the image $\rho(\Gamma_0)$ as described above.
\end{definition}

There is a criterion for the non-folding property: For any $\gamma \in \Gamma_0$, choose $\omega_{\gamma} \in \Lambda^1(\Gamma_0)$ such that the distance $d_{\angle}(\gamma^{-1} b_0, \omega_{\gamma})$ between $\gamma^{-1} b_0$ and $\omega_{\gamma}$ equals the distance $d_{\angle}(\gamma^{-1} b_0, \Lambda^1(\Gamma_0))$ from $\gamma^{-1} b_0$ to $\Lambda^1(\Gamma_0)$. Similarly, let $\alpha_{\gamma} \in \Lambda^1(\Gamma_0)$ be a point that realizes the distance $d_{\angle}$ between $\gamma b_0$ and $\Lambda^1(\Gamma_0)$. We refer to $\omega_{\gamma}$ and $\alpha_{\gamma}$ as the \emph{coarse repeller} and the \emph{coarse attractor}, respectively. It is worth noting that they are not unique and we are making choices.

\begin{definition}\label{definitionofregulardistortion}
Let $\Gamma_0$ be a regular convex cocompact subgroup with respect to $(V, \Omega ,\mathcal{C})$ and let $\rho:\Gamma_0\to \mathsf{PGL}(d,\mathbb{R})$ be a projective Anosov representation with limit map $\xi$. Fix $b_0\in \mathcal{C}$.   

\begin{enumerate}
\item Let $(x,y,z)$ be a triple in $\overline{\mathcal{C}}\cap \partial \Omega$. Given any $d>0$, we say $(x,y,z)$ is \emph{$d$-bounded} if $d_{\angle}(x,y)$, $d_{\angle}(x,z)$, $d_{\angle}(y,z)\ge d$.
\item $\rho$ has the \emph{regular distortion property} if for any $d>0$, there is a constant $C>1$ depending only on $d$, such that for any $p,q\in \Lambda^1(\Gamma_0)$ and $\gamma \in \Gamma_0$ where $(\omega_{\gamma},p,q)$ is $d$-bounded, then
$$
C e^{-\alpha_1(\kappa(\rho(\gamma)))}\ge d_{\angle}(\rho(\gamma)\xi^{1}(p),\rho(\gamma)\xi^{1}(q))\ge \frac{1}{C} e^{-\alpha_1(\kappa(\rho(\gamma)))}
$$
where $\xi^{1}$ is the limit map from $\Lambda^1(\Gamma_0)$ to $\Lambda^1(\Gamma)$.
\end{enumerate}
\end{definition}

The regular distortion property implies the following shadow radius estimate, whose proof is postponed until Appendix~\ref{append1}.

\begin{proposition}[{Canary-Zhang-Zimmer~\cite[Theorem~5.1 and Theorem~7.1]{CaZhZi}}]\label{shadowradiusstrong}
If $\rho$ satisfy the regular distortion property, then $\rho$ has the non-folding property. 
\end{proposition}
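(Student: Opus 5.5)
The plan is to reduce everything to a bounded configuration by applying $\gamma^{-1}$, and then read off both inclusions from the regular distortion property. Fix $R>0$, $r\ge 0$, and take $x,z,\gamma$ as in Definition~\ref{definitionofnonfolding}. By equivariance,
\[
\xi^1(\hat{\mathcal{O}}_R(b_0,z)) = \rho(\gamma)\,\xi^1\big(\hat{\mathcal{O}}_R(\gamma^{-1}b_0,\gamma^{-1}z)\big),
\]
with $d_\Omega(\gamma^{-1}z,b_0)\le r$ and $x':=\gamma^{-1}x\in \hat{\mathcal{O}}_R(\gamma^{-1}b_0,\gamma^{-1}z)$. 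Only finitely many $\gamma$ have $d_\Omega(b_0,\gamma b_0)$ below any fixed threshold, and for those both inclusions hold after enlarging $C$, because the shadows have nonempty interior in $\Lambda^1(\Gamma_0)$ and the limit set is compact. So we may assume $\gamma^{-1}b_0$ is close to $\partial\Omega$. By cocompactness, $\gamma^{-1}b_0$ is then angularly close to its coarse repeller $\omega_\gamma$.

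\textbf{Step 1 (geometry after pulling back).} I would show, by a compactness argument in the spirit of the proof of Lemma~\ref{lemmaofellipseandshadows}, that there is $d_1=d_1(R,r)>0$ with the following property. Every point of $\hat{\mathcal{O}}_R(\gamma^{-1}b_0,\gamma^{-1}z)$, and in particular $x'$, lies at angular distance at least $d_1$ from $\omega_\gamma$. The reason is that rays from points near $\omega_\gamma$ passing through the $(R+r)$-ball about $b_0$ exit at points uniformly far from $\omega_\gamma$. Conversely, for any $R'>R$, every $p\in\Lambda^1(\Gamma_0)$ with $d_\angle(p,\omega_\gamma)\ge d_1$ is contained in $\hat{\mathcal{O}}_{R'}(\gamma^{-1}b_0,\gamma^{-1}z)$ once $R'$ is large. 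The same argument shows the following: there is $d_2>0$ such that every $p$ with $d_\angle(p,\omega_\gamma)\ge d_1$ and $d_\angle(p,x')<d_2$ already lies in the radius-$R$ shadow. This uses that $x'$ sits on a ray through the ball of radius $r<R$, so it is an interior point of the shadow, uniformly.

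\textbf{Step 2 (upper inclusion).} Fix finitely many points of $\Lambda^1(\Gamma_0)$ that are pairwise at distance at least $3d_0$; the limit set is perfect, so such points exist. For any $p$ in the pulled-back shadow, one of these points, call it $q$, is $d_0$-far from $\omega_\gamma$, from $x'$ and from $p$. Both triples $(\omega_\gamma,p,q)$ and $(\omega_\gamma,x',q)$ are $\min(d_0,d_1)$-bounded. Regular distortion bounds each of $d_\angle(\rho(\gamma)\xi^1(p),\rho(\gamma)\xi^1(q))$ and $d_\angle(\rho(\gamma)\xi^1(q),\rho(\gamma)\xi^1(x'))$ by $Ce^{-\alpha_1(\kappa(\rho(\gamma)))}$. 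The triangle inequality then gives the outer inclusion.

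\textbf{Step 3 (lower inclusion, the main obstacle).} Let $p\in\Lambda^1(\Gamma_0)$ with $\rho(\gamma)\xi^1(p)$ outside the shadow image; I need $d_\angle(\rho(\gamma)\xi^1(p),\xi^1(x))\gtrsim e^{-\alpha_1(\kappa(\rho(\gamma)))}$. If $d_\angle(p,\omega_\gamma)\ge d_1$, then Step 1 forces $d_\angle(p,x')\ge d_2$. The triple $(\omega_\gamma,p,x')$ is then bounded, and regular distortion gives the lower bound directly.

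The hard case is $p$ near $\omega_\gamma$, where regular distortion says nothing. Here I would argue that such points are mapped far away. By the Cartan property (Lemma~\ref{lemmaofcartanproperty}) applied to $\gamma^{-1}$, $\xi^1(\omega_\gamma)$ and $V_{d-1}(\rho(\gamma))$ are asymptotically in the same position. Hence the $d_1$-neighbourhood of $\omega_\gamma$ in the limit set is the complement of a set containing all limit points $\xi^1$-transverse to $V_{d-1}(\rho(\gamma))$ at a definite angle. Its image under $\rho(\gamma)$ should therefore lie outside a fixed-size neighbourhood $\mathcal{B}(U_1(\rho(\gamma)),c)$, which contains $\xi^1(x)$ at distance $\ll c$.

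If that direct argument turns out to be too lossy, I would use a fallback: take a point $q$ on the boundary annulus $d_\angle(q,\omega_\gamma)\approx d_1$ and use monotonicity along the ray ordering of shadows, in the spirit of Lemma~\ref{lemmaofshadowvitali} and the nesting of shadows along a quasi-geodesic from Lemma~\ref{lemmaofquasigeodesics}. This reduces the hard case to the bounded case treated above.
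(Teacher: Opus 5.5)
The gap is in the case $d_\angle(p,\omega_\gamma)<d_1$ of Step~3, and that case is the whole content of the lower inclusion.

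The rest is essentially sound. Your reduction, Steps~1--2, and the bounded case of Step~3 work. One small correction: $d_2$ does not come from $r<R$, which you cannot assume because the non-folding property quantifies over all $R>0$, $r\ge 0$. It comes from the ray $[\gamma^{-1}b_0,x')$ passing through the centre $\gamma^{-1}z$ of the $R$-ball. Deriving the outer inclusion from regular distortion plus the triangle inequality is a legitimate substitute for the paper's citation of \cite[Theorem~5.1]{CaZhZi}.

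\textbf{The direct argument fails.} Let $S$ be the limit points with $d_\angle(p,\omega_\gamma)\ge d_1$ and $N$ the rest. Then $\rho(\gamma)\xi^1(N)$ is the complement in $\xi^1(\Lambda^1(\Gamma_0))$ of $\rho(\gamma)\xi^1(S)$. By your own Step~2, that removed set has diameter $O(e^{-\alpha_1(\kappa(\rho(\gamma)))})$ around $\xi^1(x)$. Since the limit set is compact and perfect, $\rho(\gamma)\xi^1(N)$ meets every ball of fixed radius about $\xi^1(x)$ once $\gamma$ is large. So it cannot avoid a fixed neighbourhood $\mathcal{B}(U_1(\rho(\gamma)),c)$. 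Being at angle $\epsilon$ from $V_{d-1}(\rho(\gamma))$ only puts the image within about $\frac{\sigma_2}{\sigma_1}(\rho(\gamma))/\epsilon$ of $U_1(\rho(\gamma))$, which does not push it away.

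\textbf{The fallback also fails.} Comparing with one point $q$ on the annulus says nothing about where $\rho(\gamma)\xi^1(p)$ lands when $p$ is deep inside $N$. Regular distortion for $\gamma$ itself is vacuous there.

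\textbf{The missing idea.} For such $p$, with $y=\gamma p$, the distance $d_\angle(\xi^1(x),\xi^1(y))$ is governed by a \emph{different} group element: one sitting where the ray $[b_0,y)$ branches off $[b_0,x)$. The paper makes this precise in three steps.
\begin{enumerate}
\item Set $w=\pi_x(y)\in[b_0,x)$ and choose $\beta\in\Gamma_0$ with $d_\Omega(\beta b_0,w)\le D$. Regular distortion applied to $\beta$ (Lemma~\ref{lem10}) gives $d_\angle(\xi^1(x),\xi^1(y))\gtrsim e^{-\alpha_1(\kappa(\rho(\beta)))}$.
\item The hypothesis $y\notin\hat{\mathcal{O}}_R(b_0,z)$, via Lemma~\ref{lem9}, forces $w$ to lie before $z$ on the ray up to bounded error. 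Hence $\beta b_0$ is uniformly close to $[b_0,\gamma b_0]$.
\item The coarse additivity of $\alpha_1\circ\kappa\circ\rho$ in this configuration, \cite[Lemma~6.6]{CaZhZi}, then gives $\alpha_1(\kappa(\rho(\beta)))\le \alpha_1(\kappa(\rho(\gamma)))+C_0$.
\end{enumerate}
This last comparison is not automatic, since $\alpha_1\circ\kappa$ is not monotone along geodesics in general. Your proposal contains neither the change of group element nor this comparison. Your appeal to ``monotonicity along the ray ordering of shadows'' would have to become exactly these two ingredients.
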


\begin{remark}\label{listofexamples}
Numerous examples of projective Anosov representations satisfy these assumptions, and for most of them, the Hausdorff dimension of the limit set is computable (see the references):
\begin{enumerate}
    \item When $\Gamma_0$ is a convex cocompact subgroup in $\mathsf{SO}(n,1)$ where $n>1$, the identity representation satisfies the non-folding property (Sullivan~\cite{sullivanoriginal}).
    \item When $\rho$ is a $(1,1,2)$-hyperconvex representation, it satisfies the regular distortion property (Pozzetti--Sambarino--Wienhard~\cite{pozzetti2020conformalityrobustclassnonconformal}, see also Canary--Zhang--Zimmer~\cite{CaZhZi}).
    \item When $\Gamma_0$ is a convex cocompact subgroup acting on $\mathbb{D}$, and $\rho$ is the composition of a $\Theta$-positive representation and an adapted representation with respect to some $\alpha\in \Theta$ (see Yao~\cite[Lemma~2.2 and Proposition~2.16]{yaothetapositive}), then $\rho$ satisfies the regular distortion property (\cite[Theorem~4.2]{yaothetapositive}).    
\end{enumerate}
\end{remark}

\subsection{Quasi-Self-Similarity}
We follow Falconer~\cite{falconerselfsimilar1} to introduce the concept of quasi-self-similarity.

\begin{definition}\label{definitionofquasiselfsimilarity}
A non-empty compact metric space $(F,d)$ is called \emph{expansively quasi-self-similar} if there exist constants $\lambda_0,a_0>0$ such that for any set $N\subset F$ with $\mathrm{diam}(N)<\lambda_0$, there exists a mapping $\phi:N\to F$ such that
\[
a_0 d(x,y)\le \mathrm{diam}(N) d(\phi(x),\phi(y)) \quad \text{for all } x,y\in N.
\]
\end{definition}

And we introduce the lower and upper box-counting dimensions.

\begin{definition}\label{definitionofboxcountingdimension}
Let $(F,d)$ be a metric space, and for $\epsilon>0$, let $n(\epsilon)$ be the largest number of disjoint balls of radius $\epsilon$ centered in $F$. The \emph{lower box-counting dimension} and the \emph{upper box-counting dimension} are defined by
\[
\underline{\dim}_B F = \liminf_{\epsilon\to 0}  - \frac{\log n(\epsilon)}{\log \epsilon},
\]
and
\[
\overline{\dim}_B F = \limsup_{\epsilon\to 0}  - \frac{\log n(\epsilon)}{\log \epsilon}.
\]
\end{definition}

\begin{theorem}[{Falconer~\cite[Theorem~3]{falconerselfsimilar1}}]\label{theoremoffalconerdimension}
If $(F,d)$ is expansively quasi-self-similar, then
\[
\dim_H F = \underline{\dim}_B F = \overline{\dim}_B F.
\]
\end{theorem}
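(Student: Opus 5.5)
The plan is to use that the inequalities $\dim_H F\le \underline{\dim}_B F\le \overline{\dim}_B F$ hold for every compact metric space, so only $\overline{\dim}_B F\le \dim_H F$ needs proof. I would fix $t>\dim_H F$ and show $n(\epsilon)\le K\epsilon^{-t}$ for all small $\epsilon$. Letting $t\downarrow \dim_H F$ then finishes the argument. The mechanism is that quasi-self-similarity turns a single efficient Hausdorff cover into a recursive inequality for the packing numbers $n(\epsilon)$.

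\textbf{Step 1 (local-to-global packing comparison).} Let $N\subset F$ with $\mathrm{diam}(N)<\lambda_0$, and let $\phi:N\to F$ be as in Definition~\ref{definitionofquasiselfsimilarity}. Disjoint balls of radius $\epsilon$ centred in $N$ have pairwise distances of centres at least $\epsilon$ (after adjusting by a harmless factor of $2$ depending on conventions). Since $d(\phi(x),\phi(y))\ge a_0 d(x,y)/\mathrm{diam}(N)$, the images of these centres are $a_0\epsilon/\mathrm{diam}(N)$-separated in $F$. Hence the number $n_N(\epsilon)$ of such balls centred in $N$ satisfies
\[
n_N(\epsilon)\le n\!\left(c\,a_0\epsilon/\mathrm{diam}(N)\right)
\]
for a universal constant $c>0$ coming from the separation-versus-disjointness bookkeeping.

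\textbf{Step 2 (an efficient finite cover).} Since $t>\dim_H F$, we have $\mathcal H^t(F)=0$. So for any $\eta>0$ there is a cover of $F$ by sets $U_i$ with $\mathrm{diam}(U_i)<\lambda_0$ and $\sum_i \mathrm{diam}(U_i)^t<\eta$. After enlarging each $U_i$ slightly to an open set and using compactness of $F$, we may take the cover finite, $U_1,\dots,U_m$, still with $\sum_i \mathrm{diam}(U_i)^t<\eta$. Set $N_i=U_i\cap F$ and $r_i=\mathrm{diam}(N_i)$. I would choose $\eta$ so that
\[
\sum_i \left(\frac{r_i}{c\,a_0}\right)^t<1 .
\]
Points of measure zero in diameter (singletons) are handled trivially.

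\textbf{Step 3 (recursion and induction on scales).} A maximal disjoint family of $\epsilon$-balls centred in $F$ splits according to which $N_i$ contains each centre. By Step 1 this gives
\[
n(\epsilon)\le \sum_{i=1}^m n\!\left(c\,a_0\epsilon/r_i\right).
\]
Every $r_i$ is strictly less than $c\,a_0$ once $\eta$ is small, so each argument $c\,a_0\epsilon/r_i$ is larger than $\epsilon$ by a fixed factor $\beta>1$. I would then prove $n(\epsilon)\le K\epsilon^{-t}$ by induction over the dyadic-type shells $[\beta^{-k-1}\epsilon_0,\beta^{-k}\epsilon_0)$. The base case uses that $n$ is finite and bounded on $[\epsilon_\ast,\infty)$ by compactness, and $K$ is chosen large enough there. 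For the inductive step,
\[
n(\epsilon)\le \sum_i K\left(\frac{c\,a_0\epsilon}{r_i}\right)^{-t}=K\epsilon^{-t}\sum_i\left(\frac{r_i}{c\,a_0}\right)^t\le K\epsilon^{-t}.
\]
Thus $\overline{\dim}_B F\le t$.

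\textbf{Main obstacle.} The difficulty is entirely in Steps 1 and 2. First, the constant $c$ relating disjoint balls to separated centres must be tracked uniformly, so that the threshold in Step 2 depends only on $a_0$. Second, one must ensure that passing from an arbitrary small Hausdorff cover to a finite one with sets of diameter below $\lambda_0$ does not destroy the smallness of $\sum_i r_i^t$. The induction in Step 3 is routine once the strict inequality $\sum_i (r_i/(c\,a_0))^t<1$ is secured.
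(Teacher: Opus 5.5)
The paper gives no proof here and simply cites Falconer; your argument is the standard proof of that theorem and is correct. It runs through a finite cover with $\sum_i \mathrm{diam}(U_i)^t$ small coming from $\mathcal{H}^t(F)=0$, the packing recursion $n(\epsilon)\le\sum_i n\bigl(c\,a_0\epsilon/r_i\bigr)$ with $c=\tfrac12$ obtained from the expanding maps, and induction over scales.

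The one point to tidy is the base case. Since $n(\epsilon)\ge 1$ while $K\epsilon^{-t}\to 0$ as $\epsilon\to\infty$, you cannot dominate $n$ on all of $[\epsilon_\ast,\infty)$. However, the recursion only ever calls on the bounded range $[\epsilon_0,\,c\,a_0\epsilon_0/\min_{r_i>0}r_i]$, so the base case only needs that range. The zero-diameter pieces contribute at most one ball each, and this additive constant is absorbed by the slack in $\sum_i (r_i/(c\,a_0))^t<1$ once $K$ is large.
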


\subsection{Hausdorff Dimension of the Non-Folding Limit Sets}
In this Section, we always work with the underlying angular metric $d_{\angle}$. We prove the following theorem:

\begin{theorem}\label{theoremofnonfoldingdimension}
Let $\Gamma_0$ be a regular convex cocompact subgroup with respect to $(V,\Omega,\mathcal{C})$, and let $\rho:\Gamma_0\to \mathsf{PGL}(d,\mathbb{R})$ be a projective Anosov representation with limit map $\xi$. If $\rho$ has the non-folding property, then 
\[
\dim_H \big(\xi^1 (\Lambda^1(\Gamma_0))\big) = \underline{\dim}_B \big(\xi^1 (\Lambda^1(\Gamma_0))\big) = \overline{\dim}_B \big(\xi^1 (\Lambda^1(\Gamma_0))\big) = \delta^{\alpha_1}(\rho(\Gamma_0)),
\]
where $\delta^{\alpha_1}(\rho(\Gamma_0))$ is the critical exponent defined by the convergence abscissa of the associated Poincare series:
\[
\mathcal{Q}_{\rho(\Gamma_0)}^{\alpha_1}(s) = \sum_{\gamma \in \Gamma_0} e^{-s\, \alpha_1(\kappa(\rho(\gamma)))}.
\]

\end{theorem}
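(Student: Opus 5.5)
Write $F=\xi^1(\Lambda^1(\Gamma_0))$ and $\delta=\delta^{\alpha_1}(\rho(\Gamma_0))$. The argument has three steps: show $F$ is expansively quasi-self-similar (Definition~\ref{definitionofquasiselfsimilarity}), so that Theorem~\ref{theoremoffalconerdimension} makes the Hausdorff and box dimensions equal; then prove $\overline{\dim}_B F\le \delta$; then prove $\underline{\dim}_B F\ge\delta$. In both counting steps, the non-folding property turns shadows $\xi^1(\hat{\mathcal O}_R(b_0,\gamma b_0))$ into honest $d_\angle$-balls of radius comparable to $e^{-\alpha_1(\kappa(\rho(\gamma)))}$.

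\textbf{Upper box dimension.} Fix $\epsilon$ small and let $I_\epsilon=\bigcup_{n\in[\log(1/\epsilon)-c,\ \log(1/\epsilon)+c]}\mathcal A_n(\alpha_1)$, where $c$ is a fixed constant.
\begin{itemize}
\item By Lemma~\ref{lemmaofquasigeodesics}, every ray $[b_0,x)$ passes within $R$ of some $\gamma b_0$. Since $\alpha_1(\kappa(\rho(\gamma)))$ changes by a bounded amount along consecutive points of a quasi-geodesic (Lemma~\ref{lemmaofuniformcontinuity}), that $\gamma$ can be chosen in $I_\epsilon$.
\item So the shadows $\hat{\mathcal O}_R(b_0,\gamma b_0)$, $\gamma\in I_\epsilon$, cover $\Lambda^1(\Gamma_0)$. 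By Lemma~\ref{lemmaofshadowvitali} we extract a disjoint subfamily $J_\epsilon$ whose $5R$-shadows still cover.
\item By non-folding, each $5R$-shadow lies in a ball of radius $\asymp\epsilon$, hence $n(\epsilon)\lesssim |J_\epsilon|$.
\item Take the $\alpha_1$-Patterson--Sullivan measure $\mu$ (Theorem~\ref{theoremofexistenceofpattersonsullivan}; the paper uses this measure for $d=3$, and for general $d$ we would rather work with $\delta^{\alpha_1}=\delta^{\omega_1-\omega_2+\omega_1}$-type functionals, or note $\alpha_1$ need not lie in $\mathfrak a_\theta^*$). The shadow lemma (Theorem~\ref{shadowlemma}) gives each disjoint shadow mass $\asymp\epsilon^{\delta}$, so $|J_\epsilon|\lesssim\epsilon^{-\delta}$.
\end{itemize}

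\textbf{Caveat on the measure.} For $d>3$, $\alpha_1=2\omega_1-\omega_2\notin\mathfrak a_{\{1,d-1\}}^*$. I would handle this either by using a Patterson--Sullivan measure on $\Lambda^{\{1,2,d-2,d-1\}}$ (which exists because $\rho$ being $\{1,2\}$-transverse is not given), or, more robustly, by avoiding measures altogether and counting with Lemma~\ref{lemmaofdisjointshadows}. Bounded-distance classes in $\mathcal A_n(\alpha_1)$ have uniformly bounded size (Švarc--Milnor), and a $C_0$-separated subfamily of $\mathcal A_n(\alpha_1)$ has pairwise disjoint shadows, so
\[
n(\epsilon)\asymp \#\{\text{$C_0$-separated subsets of } \mathcal A_n(\alpha_1)\}\asymp |\mathcal A_n(\alpha_1)|.
\]
The lower bound on $n(\epsilon)$ uses the inner ball inclusion from non-folding. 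This gives $n(e^{-n})\asymp|\mathcal A_n(\alpha_1)|$ up to bounded windows of $n$. The upper bound then follows from the covering argument above: $|\mathcal A_n(\alpha_1)|$ is at most a constant times the size of a maximal separated net, because the shadows of a net cover $F$.

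\textbf{Lower box dimension.} I combine Lemma~\ref{lemmaofdivergence} (with $\phi=\alpha_1$, giving $\limsup_n |\mathcal A_n|e^{-\delta' n}=\infty$ for all $\delta'<\delta$) with a Cauchy--Hadamard argument, which gives $\limsup_n \frac1n\log|\mathcal A_n|=\delta$. Hence $\overline{\dim}_B F=\delta$, and the lower box dimension is handled by Falconer's theorem.

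\textbf{Quasi-self-similarity (main obstacle).} Given $N\subset F$ with small diameter $r$, pick $p\in N$, write $p=\xi^1(x)$, and walk along $[b_0,x)$ to a point $z$ near $\gamma b_0$ with $e^{-\alpha_1(\kappa(\rho(\gamma)))}\asymp r$, chosen so that by non-folding $N\subset\xi^1(\hat{\mathcal O}_R(b_0,z))$. Then set $\phi=\rho(\gamma)^{-1}$. The needed inequality is
\[
d_\angle(\rho(\gamma)^{-1}u,\rho(\gamma)^{-1}v)\gtrsim d_\angle(u,v)\, e^{\alpha_1(\kappa(\rho(\gamma)))}
\]
for $u,v$ in the shadow. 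Take $u'=\rho(\gamma)^{-1}u$ and $v'=\rho(\gamma)^{-1}v$; they lie in $\xi^1(\hat{\mathcal O}_R(\gamma^{-1}b_0,b_0))$, which sits uniformly away from the repelling hyperplane. At scale $s=d_\angle(u',v')$, apply non-folding to the pair $(u',v')$ based at $u'$: they are separated by a shadow of a point $w$ on $[b_0,x')$ with $e^{-\alpha_1(\kappa(\rho(\eta)))}\asymp s$. Translating by $\gamma$ gives a shadow from $\gamma b_0$-adjacent geometry, and cocycle additivity $\alpha_1(\kappa(\rho(\gamma\eta)))\approx\alpha_1(\kappa(\rho(\gamma)))+\alpha_1(\kappa(\rho(\eta)))$ along the geodesic yields $d_\angle(u,v)\lesssim s\,e^{-\alpha_1(\kappa(\rho(\gamma)))}$. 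This additivity along the Morse geodesic, which I would justify from the Anosov property and the regularity of the limit set, is the step I expect to be hardest.
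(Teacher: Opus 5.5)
\textbf{The gap is in the quasi-self-similarity step.} You rest the expansion estimate $d_\angle(\rho(\gamma)^{-1}u,\rho(\gamma)^{-1}v)\gtrsim e^{\alpha_1(\kappa(\rho(\gamma)))}d_\angle(u,v)$ on a coarse additivity $\alpha_1(\kappa(\rho(\gamma\eta)))\approx\alpha_1(\kappa(\rho(\gamma)))+\alpha_1(\kappa(\rho(\eta)))$, and you leave it unproved. In the two-sided form you state, it does not follow from the hypotheses. Along geodesics one has $\sigma_1(\rho(\gamma\eta))\approx\sigma_1(\rho(\gamma))\sigma_1(\rho(\eta))$. So the upper bound is equivalent to $\sigma_1\sigma_2(\rho(\gamma\eta))\gtrsim\sigma_1\sigma_2(\rho(\gamma))\,\sigma_1\sigma_2(\rho(\eta))$. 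That is a transversality statement about $2$-planes, and a projective Anosov representation does not give it.

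What is available is the superadditive inequality $\alpha_1(\kappa(\rho(\gamma\eta)))\ge\alpha_1(\kappa(\rho(\gamma)))+\alpha_1(\kappa(\rho(\eta)))-C$ for $\gamma b_0$ near $[b_0,\gamma\eta b_0]$. This is \cite[Lemma~6.6]{CaZhZi}, as used in Appendix~\ref{append1}. It happens to be the direction your inequality needs, so your route is repairable. It would still require three further pieces, none of which you carry out:
\begin{itemize}
\item a second application of inner non-folding at the branching scale of $(u',v')$, to get $e^{-\alpha_1(\kappa(\rho(\eta)))}\lesssim d_\angle(u',v')$;
\item a comparison between shadows seen from $\gamma b_0$ and shadows seen from $b_0$;
\item the superadditivity itself.
\end{itemize}

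\textbf{The missing idea is elementary linear algebra, and you were one sentence away from it.} You noted that $u',v'$ lie in $\xi^1(\hat{\mathcal O}_R(\gamma^{-1}b_0,b_0))$, uniformly away from the repelling hyperplane. Quantitatively, Lemma~\ref{lemmaofellipseandshadows}(2) together with Lemma~\ref{lemmaofmapofellipse} puts this set inside $\mathcal B(V_{d-1}(\rho(\gamma))^\perp,C)$ for all but finitely many $\gamma$. Modulo the isometries $v(\rho(\gamma))$ and $u(\rho(\gamma))$, $\rho(\gamma)$ acts on this region, in the chart $[1:y]$ with $|y|\le C$, as $y\mapsto\diag(\sigma_2/\sigma_1,\dots,\sigma_d/\sigma_1)\,y$. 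Hence it is $\lesssim\frac{\sigma_2}{\sigma_1}(\rho(\gamma))$-Lipschitz for $d_\angle$. Equivalently, $\rho(\gamma)^{-1}$ expands $d_\angle$ on $\mathcal E(\rho(\gamma),C)$ by at least $C'e^{\alpha_1(\kappa(\rho(\gamma)))}$; this is the Observation in the proof of Proposition~\ref{propositionofquasiselfsimilarityofthelimitset}. Place $N$ inside a single shadow by inner non-folding, with $\mathrm{diam}(N)\asymp e^{-\alpha_1(\kappa(\rho(\gamma)))}$. The Observation then gives expansive quasi-self-similarity directly, with no additivity at all.

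\textbf{The rest of your plan is fine.} Your measure-free count proves $\overline{\dim}_B\le\delta$ directly: the $R$-shadows of $I_\epsilon$ cover and have diameter $\lesssim\epsilon$ by outer non-folding, and convergence of $\mathcal Q^{\alpha_1}_{\rho(\Gamma_0)}(s)$ for $s>\delta$ gives $|\mathcal A_m(\alpha_1)|\lesssim_s e^{sm}$. This is a self-contained substitute for the paper's appeal to \cite[Corollary~5.2]{CaZhZi}. Your lower bound is the paper's Proposition~\ref{propositionoftheboxcountinglowerbound}.

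Drop the Patterson--Sullivan detour. For $d>3$, Theorem~\ref{theoremofexistenceofpattersonsullivan} gives no $\alpha_1$-measure. A measure on $\Lambda^{\{1,2,d-2,d-1\}}$ would need $\{2\}$-transversality, which is not assumed.
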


\vspace{3mm}

The theorem follows from the following two propositions. Let $\Gamma_0, V, \Omega, \mathcal{C}, \rho$, and $\xi$ be as described in Theorem~\ref{theoremofnonfoldingdimension}.

\begin{proposition}\label{propositionoftheboxcountinglowerbound}
$\overline{\dim}_B \big(\xi^1(\Lambda^1(\Gamma_0))\big) \ge \delta^{\alpha_1}(\rho(\Gamma_0)).$
\end{proposition}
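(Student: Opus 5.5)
The plan is to produce many disjoint balls of a given radius centred on the limit set, using the annuli $\mathcal{A}_n(\alpha_1)$, the separation Lemma~\ref{lemmaofdisjointshadows}, and the divergence Lemma~\ref{lemmaofdivergence}. Write $\Lambda := \xi^1(\Lambda^1(\Gamma_0))$ and $\delta := \delta^{\alpha_1}(\rho(\Gamma_0))$. Fix $0<\delta'<\delta$.

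\textbf{Step 1 (counting).} By Lemma~\ref{lemmaofdivergence} with $\phi=\alpha_1$, there are infinitely many $n$ with
\[
\sum_{\gamma\in\mathcal{A}_n(\alpha_1)} e^{-\delta'\alpha_1(\kappa(\rho(\gamma)))}\ge 1 .
\]
Each term is at most $e^{-\delta' n}$, so along these $n$ we get $|\mathcal{A}_n(\alpha_1)|\ge e^{\delta' n}$.

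\textbf{Step 2 (separated subfamily).} Fix $R$ large enough for Lemma~\ref{lemmaofquasigeodesics} and the non-folding property, and let $C_0=C_0(R)$ be given by Lemma~\ref{lemmaofdisjointshadows}. Since $\Gamma_0$ acts properly on $\mathcal{C}$, a ball $B_{d_\Omega}(\gamma b_0, C_0)$ contains at most $M$ orbit points, with $M$ independent of $\gamma$. A greedy selection therefore gives $\mathcal{J}_n\subset\mathcal{A}_n(\alpha_1)$ with $|\mathcal{J}_n|\ge |\mathcal{A}_n(\alpha_1)|/M$ and pairwise distances $>C_0$. By Lemma~\ref{lemmaofdisjointshadows}, the shadows $\mathcal{O}_R(b_0,\gamma b_0)$, $\gamma\in\mathcal{J}_n$, are pairwise disjoint.

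\textbf{Step 3 (balls inside shadows).} For $\gamma\in\mathcal{J}_n$, the goal is a point $x_\gamma\in\Lambda^1(\Gamma_0)$ with $b_0,\gamma b_0,x_\gamma$ collinear. Choose any $x\in\Lambda^1(\Gamma_0)$ whose ray $[b_0,x)$ passes within bounded distance of $\gamma b_0$. Such an $x$ exists by cocompactness: $\gamma b_0$ lies near a geodesic ray from $b_0$, which follows from Lemma~\ref{lemmaofquasigeodesics} together with the fact that $\mathcal{C}$ is the convex hull of the limit set. Take $z$ on $[b_0,x)$ with $d_\Omega(z,\gamma b_0)\le r$ for a uniform $r$. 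Apply the non-folding property with radius $R'=R-r$, and enlarge $R$ beforehand so that $\mathcal{O}_{R'}(b_0,z)\subset\mathcal{O}_R(b_0,\gamma b_0)$. This gives
\[
B_{d_\angle}\bigl(\xi^1(x),\tfrac1C e^{-\alpha_1(\kappa(\rho(\gamma)))}\bigr)\cap\Lambda\subset\xi^1(\hat{\mathcal{O}}_R(b_0,\gamma b_0)),
\]
and the radius is at least $\epsilon_n:=\tfrac1C e^{-(n+1)}$.

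These intersections are disjoint because $\xi^1$ is injective and the shadows are disjoint. Consequently the balls $B(\xi^1(x_\gamma),\epsilon_n/2)$ centred in $\Lambda$ are pairwise disjoint: if two of them met, the two centres would be within $\epsilon_n$ of each other, so one centre would lie in the other's $\epsilon_n$-ball intersected with $\Lambda$, contradicting disjointness. Hence
\[
n(\epsilon_n/2)\ge e^{\delta' n}/M,
\]
so $-\log n(\epsilon_n/2)/\log(\epsilon_n/2)\to\ge\delta'$ along these $n$. Letting $\delta'\to\delta$ proves the proposition.

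The main obstacle is Step 3, namely producing the collinear point $x_\gamma$ with uniform $r$ and matching the shadow radii. If needed, I would replace $\gamma$ by a nearby group element $\gamma'$ with $d_\Omega(\gamma b_0,\gamma' b_0)\le r$. Lemma~\ref{lemmaofuniformcontinuity} keeps $\alpha_1(\kappa(\rho(\gamma')))$ within a bounded additive constant of $\alpha_1(\kappa(\rho(\gamma)))$, which only changes $C$.
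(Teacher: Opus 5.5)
Your proposal is correct and follows essentially the same route as the paper. Both arguments use Lemma~\ref{lemmaofdivergence} to count $|\mathcal{A}_n(\alpha_1)|$, extract a $C_0$-separated subfamily, get disjoint shadows from Lemma~\ref{lemmaofdisjointshadows}, and place non-folding balls of radius at least $\tfrac1C e^{-(n+1)}$ inside those shadows. For the subfamily, you use a greedy maximal selection with multiplicity bound $M$, while the paper uses a finite partition $\Gamma_0=\Gamma_1\cup\dots\cup\Gamma_k$.

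The one place to tighten is Step~3. Your justification there does not follow from Lemma~\ref{lemmaofquasigeodesics}, which gives the opposite inclusion, and $\mathcal{C}$ is not assumed to be the convex hull of the limit set. The paper gets the point $x_\gamma$ directly. By Theorem~\ref{shadowlemma}, the shadows $\hat{\mathcal{O}}_{R/2}(b_0,\gamma b_0)$ of large radius have positive Patterson--Sullivan measure, so they are non-empty. Any $x$ in such a shadow gives $z\in[b_0,x)$ with $d_\Omega(z,\gamma b_0)\le R/2$.
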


\begin{proof}
Fix $b_0\in \mathcal{C}$. We can pick $R>0$ sufficiently large such that for any $\gamma\in \Gamma_0$, the shadow $\hat{\mathcal{O}}_{\frac{R}{2}}(b_0,\gamma b_0)$ is non-empty (for example, by Theorem~\ref{shadowlemma}, for any Patterson--Sullivan measure, any shadow of sufficiently large radius has positive measure, and is therefore non-empty). Moreover, for any $x \in \hat{\mathcal{O}}_{\frac{R}{2}}(b_0,\gamma b_0)$, there exists a point $z \in [b_0,x)$ such that $d_{\Omega}(z,\gamma b_0)\le \frac{R}{2}$ and $\hat{\mathcal{O}}_{\frac{R}{2}}(b_0,z)\subset \hat{\mathcal{O}}_R(b_0,\gamma b_0)$. For each $\gamma \in \Gamma_0$, we then fix a point $x(\gamma) \in \hat{\mathcal{O}}_{\frac{R}{2}}(b_0,\gamma b_0)$ and let $z(\gamma) \in [b_0,x(\gamma))$ be the corresponding point satisfying these properties.

Let $C_0 = C_0(R)$ be the constant given in Lemma~\ref{lemmaofdisjointshadows}. As $\Gamma_0$ is discrete, there exists a finite partition $\Gamma_0 = \Gamma_1\cup\dots\cup \Gamma_k$ such that for any $1\le i\le k$ and $n\ge 0$, the set $(\Gamma_i\cap  \mathcal{A}_n(\alpha_1))\cdot  b_0$ is $C_0$-separated under $d_{\Omega}$.

By Lemma~\ref{lemmaofdivergence}, there exists $1\le i_0\le k$ such that for any $0<\delta<\delta^{\alpha_1}(\rho(\Gamma_0))$,
\[
\limsup_{n\to \infty} \sum_{\gamma\in \mathcal{A}_n(\alpha_1)\cap \Gamma_{i_0}} e^{-\delta\alpha_1(\kappa(\rho(\gamma)))} = \infty.
\]
As $\rho$ satisfies the non-folding property, there exists a constant $C>1$ such that for any $\gamma\in \Gamma_0$,
\[
B_{d_{\angle}}\left(\xi^1(x(\gamma)),\frac{1}{C} e^{-\alpha_1(\kappa(\rho(\gamma)))}\right)\cap \xi^1(\Lambda^1(\Gamma_0))\subset \xi^1\big(\hat{\mathcal{O}}_{\frac{R}{2}}(b_0,z(\gamma))\big)\subset \xi^1(\hat{\mathcal{O}}_{R}(b_0,\gamma b_0)).
\]

For $n\gg 1$, by Lemma~\ref{lemmaofdisjointshadows}, the collection of balls 
\[
\left\{B_{d_{\angle}}\left(\xi^1(x(\gamma)),\frac{1}{C} e^{-\alpha_1(\kappa(\rho(\gamma)))}\right) \cap \xi^1(\Lambda^1(\Gamma_0)) \;\middle|\; \gamma \in \mathcal{A}_n(\alpha_1)\cap \Gamma_{i_0}\right\}
\]
is pairwise disjoint (since the shadows are disjoint). By the construction of $\mathcal{A}_n(\alpha_1)$, the radii of the balls in this family are at least $\frac{e^{-(n+1)}}{C}$. Therefore, $n\left(\frac{e^{-(n+1)}}{C}\right)\ge |\mathcal{A}_n(\alpha_1)\cap \Gamma_{i_0}|$.

Lemma~\ref{lemmaofdivergence} shows that 
\[
\limsup_{n\to \infty } e^{-n\delta} |\mathcal{A}_n(\alpha_1)\cap \Gamma_{i_0}| = \infty.
\]
This implies
\[
\limsup_{n\to \infty } \big(\log |\mathcal{A}_n(\alpha_1)\cap \Gamma_{i_0}|-n\delta\big) = \infty,
\]
and consequently,
\[
\limsup_{n\to \infty } \left(\log n\left(\frac{e^{-(n+1)}}{C}\right)-n\delta\right) = \infty.
\]
Let $\epsilon_n = \frac{e^{-(n+1)}}{C}$. Then $-n = \log \epsilon_n + \log C + 1$, so
\[
\limsup_{n\to \infty } \big(\log n(\epsilon_n)+(\log \epsilon_n+\log C + 1)\delta\big) = \infty.
\]
Since $(\log C + 1)\delta$ is a constant, this simplifies to
\[
\limsup_{n\to \infty } \big(\log n(\epsilon_n)+ \delta\log \epsilon_n\big) = \infty.
\]
Dividing by $-\log \epsilon_n$ (which is positive and tends to $\infty$ as $n\to\infty$), we deduce
\[
\limsup_{n\to \infty} \left(-\frac{\log n(\epsilon_n)}{\log \epsilon_n }\right)-\delta \ge 0.
\]
Thus,
\[
\overline{\dim}_B \big(\xi^1(\Lambda^1(\Gamma_0))\big) \ge \delta.
\]
Since $\delta < \delta^{\alpha_1}(\rho(\Gamma_0))$ is arbitrary, this concludes the proof.
\end{proof}

\begin{proposition}\label{propositionofquasiselfsimilarityofthelimitset}
The metric space $\big(\xi^1(\Lambda^1(\Gamma_0)), d_{\angle}\big)$ is expansively quasi-self-similar.
\end{proposition}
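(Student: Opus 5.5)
Given a subset $N\subset \xi^1(\Lambda^1(\Gamma_0))$ of small diameter, I will find a group element $\gamma$ whose shadow contains $N$ at the correct scale, and take $\phi=\rho(\gamma)^{-1}$ restricted to $N$. The non-folding property converts shadows into round balls of radius comparable to $e^{-\alpha_1(\kappa(\rho(\gamma)))}$. Pulling back by $\rho(\gamma)^{-1}$ should then rescale distances on $N$ by at least a uniform multiple of $e^{\alpha_1(\kappa(\rho(\gamma)))}$, which will be comparable to $1/\mathrm{diam}(N)$.

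\textbf{Step 1: choosing the scale.} Fix $b_0\in\mathcal{C}$, a point $p\in N$, and $p'=(\xi^1)^{-1}(p)$. Take a large $R$ and follow the ray $[b_0,p')$. By Lemma~\ref{lemmaofquasigeodesics} and cocompactness, every point $z$ on this ray lies within a fixed distance $r$ of some orbit point $\gamma_z b_0$. As $z$ moves along the ray, $\alpha_1(\kappa(\rho(\gamma_z)))$ increases coarsely continuously: consecutive orbit points differ by elements of a fixed finite set, so Lemma~\ref{lemmaofuniformcontinuity} bounds the jumps. It also tends to infinity, by the Anosov property. I will choose $z$ to be the point of the ray at which $C e^{-\alpha_1(\kappa(\rho(\gamma_z)))}$ first drops below roughly $K\,\mathrm{diam}(N)$, where $K$ is a large constant. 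Then $e^{-\alpha_1(\kappa(\rho(\gamma)))}\asymp \mathrm{diam}(N)$, and by the left inclusion in the non-folding property, $N\subset B(p,\mathrm{diam}N)\cap\Lambda\subset \xi^1(\hat{\mathcal{O}}_R(b_0,z))$. Smallness of $\mathrm{diam}(N)<\lambda_0$ ensures the stopping point lies beyond $b_0$.

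\textbf{Step 2: the Lipschitz lower bound for $\rho(\gamma)^{-1}$ on the shadow.} By equivariance, $\rho(\gamma)^{-1}\xi^1(\hat{\mathcal{O}}_R(b_0,\gamma b_0))=\xi^1(\hat{\mathcal{O}}_R(\gamma^{-1}b_0,b_0))$. This set stays a uniform distance from $\xi^1(\omega_\gamma)$ (essentially the point $y$ with $\gamma^{-1}b_0$ near $y$), so its image is uniformly transverse to $V_{d-1}(\rho(\gamma))$, in the spirit of the proof of Lemma~\ref{lemmaofellipseandshadows}. On the ball $\mathcal{B}(V_{d-1}(\rho(\gamma))^\perp, c)$, Lemma~\ref{lemmaofmapofellipse} shows that $\rho(\gamma)$ contracts every direction by at most $\sigma_2/\sigma_1(\rho(\gamma))$ up to uniform constants: its derivative there has operator norm $\lesssim \sigma_2/\sigma_1$ in the transverse directions. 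Hence $d_\angle(\rho(\gamma)u,\rho(\gamma)v)\le C' e^{-\alpha_1(\kappa(\rho(\gamma)))}d_\angle(u,v)$ for $u,v$ in the preimage set. Since this set is contained in a uniformly convex-ish projective ball, I can integrate along segments inside the ball. Inverting gives $d_\angle(\phi(x),\phi(y))\ge C'^{-1}e^{\alpha_1(\kappa(\rho(\gamma)))}d_\angle(x,y)\ge a_0 d_\angle(x,y)/\mathrm{diam}(N)$, which is the required inequality with $\phi=\rho(\gamma)^{-1}|_N$, mapping into the limit set by invariance.

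\textbf{Main obstacle.} The delicate part is Step 1's coarse continuity, namely choosing $\gamma$ so that the shadow is large enough to contain $N$ while $e^{-\alpha_1}$ is still $\gtrsim \mathrm{diam}(N)$. This needs both inclusions of the non-folding property along a single ray, with constants uniform in $r$, and a bounded jump of $\alpha_1\circ\kappa$ between successive $\gamma_z$. The Step 2 derivative bound is a routine singular-value computation once uniform transversality is secured.
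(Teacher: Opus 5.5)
Your proposal is correct and is essentially the paper's proof: the paper also moves along a quasi-geodesic ray toward $p'$, uses the bounded jumps of $\alpha_1\circ\kappa\circ\rho$ (Lemma~\ref{lemmaofuniformcontinuity}) to pick $\gamma_{n'}$ with $C\,\mathrm{diam}(N)\le e^{-\alpha_1(\kappa(\rho(\gamma_{n'})))}\le C''C\,\mathrm{diam}(N)$, places $N$ inside the shadow using the left inclusion of the non-folding property, and takes $\phi=\rho(\gamma_{n'})^{-1}$. Your Step~2 transversality claim is exactly Lemma~\ref{lemmaofellipseandshadows}~(2) combined with Lemma~\ref{lemmaofmapofellipse}, so you can cite it directly; after enlarging $R$, this also absorbs the mismatch between $\hat{\mathcal{O}}_R(b_0,z)$ and $\hat{\mathcal{O}}_R(b_0,\gamma b_0)$. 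Your derivative-and-integration estimate is the proof of the paper's unproved ``Observation'', and only the left inclusion of the non-folding property is actually needed, not both.
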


\begin{proof}
    Fix $b_0\in \mathcal{C}$. Similar to the arguments in the proof of Proposition~\ref{propositionoftheboxcountinglowerbound}, there exist a sufficiently large $R>0$ and a constant $C>1$ such that for any $\gamma\in \Gamma_0$, $\hat{\mathcal{O}}_{\frac{R}{2}}(b_0,\gamma b_0)$ is non-empty, and we have for any $x\in \hat{\mathcal{O}}_{\frac{R}{2}}(b_0,\gamma b_0)$,
    \begin{equation}\label{eqofexpansiveselfsimilar}
           B_{d_{\angle}}\left(\xi^1(x),\frac{1}{C} e^{-\alpha_1(\kappa(\rho(\gamma)))}\right)\cap \xi^1(\Lambda^1(\Gamma_0))\subset \xi^1(\hat{\mathcal{O}}_{R}(b_0,\gamma b_0)).
    \end{equation}
 
    Furthermore, by Lemma~\ref{lemmaofellipseandshadows}, we can enlarge $R$ and $C$ if necessary to assume that for all but finitely many $\gamma\in \Gamma_0$,
    \begin{equation}\label{eqofexpansiveselfsimilar2}
         \mathcal{E}\left(\rho(\gamma),\frac{1}{C}\right)\cap \Lambda^1(\rho(\Gamma_0))\subset \xi^1( \hat{\mathcal{O}}_R(b_0,\gamma b_0))\subset \mathcal{E}(\rho(\gamma), C).   
    \end{equation}

    We further enlarge $R$ and $C$ if necessary to ensure that $\frac{R}{2}$ satisfies the requirement of Lemma~\ref{lemmaofquasigeodesics}, and let $a>1, A>0$ be the constants therein. 

    Let $F\subset \Gamma_0$ be the finite subset where Equation~\ref{eqofexpansiveselfsimilar2} fails. From the definition of $\mathcal{E}(\rho(\gamma), C)$, it is straightforward to verify the following:

    \begin{observation}
    There exists a constant $C'$ such that for any $\gamma\in \Gamma_0 \setminus F$ and any $p_1,p_2\in \mathcal{E}(\rho(\gamma), C)$,
    \[
    d_{\angle}(\rho(\gamma)^{-1} p_1, \rho(\gamma)^{-1} p_2) \ge C' \frac{\sigma_1}{\sigma_2}(\rho(\gamma)) d_{\angle}(p_1,p_2).
    \]
    \end{observation}

\vspace{3mm}

    Pick $x\in \Lambda^1(\Gamma_0)$. Let $(\gamma_n)_{n\in \mathbb{Z}^{>0}}$ be the sequence representing an $(a,A)$-quasi-geodesic ray as described in Lemma~\ref{lemmaofquasigeodesics}. Then by the assumption on $R$, we have for any $n>0$,
    \[
    x\in \hat{\mathcal{O}}_{\frac{R}{2}}(b_0,\gamma_n b_0),
    \]
    and for any $\gamma_n\in \Gamma_0\setminus F$,
    \[
    B_{d_{\angle}}\left(\xi^1(x),\frac{1}{C} e^{-\alpha_1(\kappa(\rho(\gamma_n)))}\right)\cap \xi^1(\Lambda^1(\Gamma_0))\subset \xi^1(\hat{\mathcal{O}}_{R}(b_0,\gamma_n b_0))\subset \mathcal{E}(\rho(\gamma_n), C).
    \]

    From Lemma~\ref{lemmaofuniformcontinuity} and the $(a,A)$-quasi-geodesic property of $(\gamma_n)$ starting from $\gamma_0 = \mathrm{id}$, there exists a constant $C''>1$ independent of $x$ such that
    \begin{equation}\label{eqofquasiselfsimilar3}
    \frac{1}{C''} e^{-\alpha_1(\kappa (\rho(\gamma_{n+1})))}\le e^{-\alpha_1(\kappa (\rho(\gamma_{n})))}\le C'' e^{-\alpha_1(\kappa (\rho(\gamma_{n+1})))}.
    \end{equation}
    Also note that due to the $(a,A)$-quasi-geodesic property in Lemma~\ref{lemmaofquasigeodesics}, there exist constants $N_0,\epsilon_0>0$ independent of $x$ and the choice of $(\gamma_n)$ such that
    \begin{enumerate}
        \item If $n>N_0$, then $\gamma_n\in \Gamma_0\setminus F$.
        \item $\frac{\sigma_2}{\sigma_1}(\rho(\gamma_{N_0+1}))>\epsilon_0$. 
    \end{enumerate}

\vspace{3mm}

    Let $a_0 = \frac{C'}{C'' C} $ and $\lambda_0 = \frac{\epsilon_0}{CC''} $. Suppose $N\subset \xi^1(\Lambda^1(\Gamma_0))$ and $\mathrm{diam}(N)<\lambda_0$. Pick $p\in N$ and $x\in \Lambda^1(\Gamma_0)$ such that $p = \xi^1(x)$, and pick $(\gamma_n)_{n\in \mathbb{Z}^{>0}}$ representing a quasi-geodesic ray as mentioned above. By Equation~\ref{eqofquasiselfsimilar3} and $\lim_{n\to \infty} \frac{\sigma_2}{\sigma_1}(\rho(\gamma_n)) = 0$, there exists $n'>N_0$ such that
    \begin{equation}\label{equationofbootstrap}
    C \mathrm{diam}(N)\le e^{-\alpha_1(\kappa(\rho(\gamma_{n'})))}  \le  C'' (C \mathrm{diam}(N)).
    \end{equation}
    As a result,
    \[
    N\subset B_{d_{\angle}}\left(p,\frac{1}{C} e^{-\alpha_1(\kappa(\rho(\gamma_{n'})))}\right)\cap \xi^1(\Lambda^1(\Gamma_0))\subset \xi^1(\hat{\mathcal{O}}_{R}(b_0,\gamma_{n'} b_0))\subset \mathcal{E}(\rho(\gamma_{n'}), C).
    \]

    By the Observation above, we have for any $p_1,p_2\in N$,
    \[
    d_{\angle}(\rho(\gamma_{n'})^{-1}p_1, \rho(\gamma_{n'})^{-1} p_2) \ge C' \frac{\sigma_1}{\sigma_2}(\rho(\gamma_{n'})) d_{\angle}(p_1,p_2),
    \]
    which implies that 
    \[
    e^{-\alpha_1(\kappa(\rho(\gamma_{n'})))} d_{\angle}(\rho(\gamma_{n'})^{-1}p_1, \rho(\gamma_{n'})^{-1} p_2) \ge C'  d_{\angle}(p_1,p_2).
    \]
    Thus, by Equation~\ref{equationofbootstrap}
    \[
    C'' C (\mathrm{diam}(N)) d_{\angle}(\rho(\gamma_{n'})^{-1}p_1, \rho(\gamma_{n'})^{-1} p_2) \ge C'  d_{\angle}(p_1,p_2),
    \]
    so
    \[
    (\mathrm{diam}(N)) d_{\angle}(\rho(\gamma_{n'})^{-1}p_1, \rho(\gamma_{n'})^{-1} p_2) \ge a_0  d_{\angle}(p_1,p_2),
    \]
    which concludes the proof.
\end{proof}

\begin{proof}[Proof of Theorem~\ref{theoremofnonfoldingdimension}]

Theorem~\ref{theoremoffalconerdimension} and Proposition~\ref{propositionoftheboxcountinglowerbound} show that 
\[
\dim_H \big(\xi^1 (\Lambda^1(\Gamma_0))\big) = \underline{\dim}_B \big(\xi^1 (\Lambda^1(\Gamma_0))\big) = \overline{\dim}_B \big(\xi^1 (\Lambda^1(\Gamma_0))\big) \ge \delta^{\alpha_1}(\rho(\Gamma_0)).
\]
On the other hand, Canary--Zhang--Zimmer~\cite[Corollary~5.2]{CaZhZi} shows that $\dim_H \big(\xi^1 (\Lambda^1(\Gamma_0))\big)\le \delta^{\alpha_1}(\rho(\Gamma_0))$, which concludes the proof.
\end{proof}

\appendix

\section{Proof of Proposition \ref{shadowradiusstrong}}\label{append1}

The proof follows Canary--Zhang--Zimmer~\cite[Theorem~5.1 and Theorem~7.1]{CaZhZi}.

Let $V$ be a real vector space and let $\Omega \subset \mathbb{P}(V)$ be a proper convex domain. For any $a,b\in \mathbb{P}(V)$ with $a\neq b$, let $l_{ab}$ denote the projective line through $a$ and $b$ (which, when $a,b\in \overline{\Omega}$, $l_{ab}\cap \Omega$ restricts to the geodesic in $\Omega$ joining them). 
If $x,y\in \overline{\Omega}$, we write $[x,y]$, $(x,y)$, $[x,y)$, and $(x,y]$ for the closed, open, and the two half-open geodesic segments with endpoints $x$ and $y$.

Assume $\mathcal{C}\subset \Omega$ is a closed convex set such that every point in $\overline{\mathcal{C}}\cap \partial \Omega$ is $C^1$ and extreme. Fix $b_0\in \mathcal{C}$. For $x\in \partial\Omega\cap \overline{\mathcal{C}}$, let $x^{opp}$ be the second intersection point of $l_{b_0x}$ with $\partial\Omega$. 
Let $\mathring{x}$ be the codimension-2 projective subspace obtained as the intersection of the two unique supporting hyperplanes to $\Omega$ at $x$ and $x^{opp}$. 
Define $\pi_x:\partial\Omega\to [x,x^{opp}]$ by
\[
\pi_x(y)\;=\; \mathrm{Span}(\mathring{x},y)\cap [x,x^{opp}].
\]

From now on, we assume that $\Gamma_0$ is a regular convex cocompact subgroup with respect to $(V,\Omega,\mathcal{C})$, and fix $b_0\in \mathcal{C}$.

We invoke some lemmas from Canary--Zhang--Zimmer~\cite{CaZhZi}.

\begin{lemma}[{\cite[Lemma~7.5]{CaZhZi}}]\label{lem9}
For any $R>0$, there exists $T>0$ with the following property: if $x,y\in \Lambda^1(\Gamma_0)$, $z\in [b_0,x)$, $\pi_x(y)\in (z,x)$, and $d_{\Omega}\big(\pi_x(y),z\big)\ge T$, then $y\in \hat{\mathcal{O}}_R(b_0,z)$.
\end{lemma}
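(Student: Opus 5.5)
The plan is to argue by contradiction and use a compactness argument, translating everything by elements of $\Gamma_0$ so that the point $z$ stays in a fixed compact set. The key geometric input is that every point of $\overline{\mathcal{C}}\cap\partial\Omega$ is $C^1$ and extreme, together with the transversality of the limit set.

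\textbf{Setting up the contradiction.} Fix $R>0$ and suppose no such $T$ exists. Then there are sequences $x_n,y_n\in\Lambda^1(\Gamma_0)$ and $z_n\in[b_0,x_n)$ with the following properties:
\begin{itemize}
\item $\pi_{x_n}(y_n)\in(z_n,x_n)$ and $d_\Omega(\pi_{x_n}(y_n),z_n)\ge n$;
\item $y_n\notin\hat{\mathcal{O}}_R(b_0,z_n)$.
\end{itemize}
By convexity of $\mathcal{C}$ we have $z_n\in\mathcal{C}$. Since $\Gamma_0$ acts cocompactly on $\mathcal{C}$, we can pick $g_n\in\Gamma_0$ such that $g_n z_n$ lies in a fixed compact set $K\subset\mathcal{C}$.

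\textbf{Transporting the data.} Replace all data by their $g_n$-images:
\[
b_n:=g_nb_0,\quad z_n':=g_nz_n,\quad x_n':=g_nx_n,\quad y_n':=g_ny_n.
\]
Everything in the statement is projectively natural, so the configuration is preserved:
\begin{itemize}
\item $g_n$ maps $x_n^{opp}$ to the second endpoint of the line $l_{b_nx_n'}$;
\item $g_n$ maps $\mathring{x}_n$ to the intersection of the supporting hyperplanes at the two endpoints;
\item $\pi$ is conjugated accordingly, and $d_\Omega$ is invariant.
\end{itemize}
Shadows transform as $g_n\mathcal{O}_R(b_0,z_n)=\mathcal{O}_R(b_n,z_n')$. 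Passing to subsequences, we may assume:
\begin{itemize}
\item $z_n'\to z_\infty\in K$ and $x_n'\to x_\infty\in\Lambda^1(\Gamma_0)$;
\item $b_n\to a\in\overline{\mathcal{C}}$, and the second endpoints $x_n'^{\,opp}\to a'$.
\end{itemize}
There are two cases. If $b_n$ stays bounded, then $a\in\mathcal{C}$ and $a'$ is the second endpoint of $l_{ax_\infty}$. If $b_n$ is unbounded, then $a=a'\in\Lambda^1(\Gamma_0)$. In either case $z_\infty$ lies on the open segment between $a'$ and $x_\infty$, so $a'\neq x_\infty$.

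\textbf{Main step: $y_n'\to x_\infty$.} By the $C^1$ property, the supporting hyperplanes vary continuously on $\overline{\mathcal{C}}\cap\partial\Omega$. Hence the codimension-two subspaces converge to $\mathring{x}_\infty:=H_{x_\infty}\cap H_{a'}$, and the projections $\pi$ converge to the analogous projection $\pi_\infty$ onto $[x_\infty,a']$ defined via $\mathring{x}_\infty$.

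Since $d_\Omega(\pi(y_n'),z_n')\to\infty$ with $\pi(y_n')$ between $z_n'$ and $x_n'$, and $z_n'$ stays in $K$, we get $\pi(y_n')\to x_\infty$. Thus any limit $y_\infty$ of $y_n'$ lies in $\mathrm{Span}(\mathring{x}_\infty,x_\infty)=H_{x_\infty}$. But $y_\infty\in\Lambda^1(\Gamma_0)$, and a limit point different from $x_\infty$ is transverse to $H_{x_\infty}$ (transversality of the Anosov limit set, Theorem~\ref{theoremofregularcocompact}; alternatively, extremality of $x_\infty$). Hence $y_\infty=x_\infty$, i.e.\ $y_n'\to x_\infty$.

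This is the step I expect to be the main obstacle. The delicate points are the uniform convergence of the maps $\pi_{x_n'}$, which needs care in the degenerate case where $b_n$ escapes to the boundary, and the exclusion of other limit points in $H_{x_\infty}$.

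\textbf{Conclusion.} The segments $[b_n,y_n')$ converge to the segment from $a$ to $x_\infty$. In the unbounded case this is the full open geodesic $(a,x_\infty)$; in the bounded case it is $[a,x_\infty)$. In both cases it passes through $z_\infty$. Hence for large $n$ the segment $[b_n,y_n')$ meets $\overline{B}_{d_\Omega}(z_n',R)$, because $d_\Omega$ is continuous on $\Omega$ and the limiting distance is $0<R$. So $y_n'\in\mathcal{O}_R(b_n,z_n')$, i.e.\ $y_n\in\hat{\mathcal{O}}_R(b_0,z_n)$, contradicting the choice of $y_n$.
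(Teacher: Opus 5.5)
Your argument is correct. The paper gives no proof of its own here: it cites Canary--Zhang--Zimmer's Lemma~7.5 and only remarks that their extra hypothesis $d_{\Omega}(z,\Gamma_0 b_0)\le r'$ is automatic by cocompactness. That cocompactness is exactly what your normalization $g_n z_n\in K$ uses, so your contradiction/compactness argument is a natural way to supply the missing proof.

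The step you flag as the main obstacle is easier than you expect, though two justifications should be adjusted.

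\textbf{No uniform convergence of $\pi$ is needed.} Since $\pi(y_n')\in\Omega$ is not in $\mathring{x}_n'$, the definition gives $y_n'\in P_n:=\mathrm{Span}(\mathring{x}_n',\pi(y_n'))$. It then suffices to show $P_n\to H_{x_\infty}$.

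\textbf{Convergence of $\mathring{x}_n'$.} Do not appeal to continuity of $x\mapsto H_x$ on $\overline{\mathcal{C}}\cap\partial\Omega$ at the point $(x_n')^{opp}$. That point is generally not in $\overline{\mathcal{C}}$. In the bounded case its limit $a'$ need not be either, nor even a $C^1$ point, so $\mathring{x}$ involves a choice of supporting hyperplane there. Instead, pass to a subsequence so that the chosen supporting hyperplanes at $(x_n')^{opp}$ converge to some supporting hyperplane $H'$ at $a'$. The chord $[a',x_\infty]$ passes through $z_\infty\in\Omega$, so no supporting hyperplane contains both $a'$ and $x_\infty$. Hence:
\begin{itemize}
\item $x_\infty\notin H'$ and $H'\neq H_{x_\infty}$;
\item $\mathring{x}_n'\to H_{x_\infty}\cap H'$, which has codimension two and does not contain $x_\infty$;
\item $P_n\to \mathrm{Span}(H_{x_\infty}\cap H',x_\infty)=H_{x_\infty}$.
\end{itemize}
This handles the bounded and unbounded cases uniformly, and works for any choice of supporting hyperplane at $x^{opp}$.

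\textbf{Ruling out $y_\infty\in H_{x_\infty}\setminus\{x_\infty\}$.} Extremality of $x_\infty$ alone does not do this, since an extreme point can be the endpoint of a boundary segment. Use transversality of $\Lambda^{1,\dim V-1}(\Gamma_0)$ instead, or extremality of the midpoint of $[x_\infty,y_\infty]\subset\overline{\mathcal{C}}\cap\partial\Omega$.

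With these adjustments, plus the one-line ordering argument along the chord ($(x_n')^{opp},b_n,z_n',x_n'$) that gives $a=a'$ in the unbounded case, the proof is complete.
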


Note that the variable $r'$ and the requirement $d_{\Omega}\big(z,\Gamma_0(b_0)\big)\le r'$ in the reference are redundant in our setting because $\Gamma_0$ acts cocompactly on $\mathcal{C}$.

\begin{lemma}[{\cite[Lemma~7.6]{CaZhZi}}]\label{lem10}
Let $\rho:\Gamma_0\to \mathsf{PGL}(d,\mathbb{R})$ be a projective Anosov representation with limit map $\xi$ that satisfies the regular distortion property. For any $r\ge 0$ and $0<\delta<1$, there exists a constant $C_1>1$ such that for any $x,y\in \Lambda^1(\Gamma_0)$ and $\gamma\in \Gamma_0$, if
\[
d_{\Omega}\big(\gamma b_0,\pi_x(y)\big)\le r
\quad\text{and}\quad
d_{\angle}(x,y)\le \delta,
\]
then
\[
d_{\angle}\big(\xi^1(x),\xi^1(y)\big) \ge \frac{1}{C_1} e^{-\alpha_1(\kappa(\rho(\gamma)))}.
\]
\end{lemma}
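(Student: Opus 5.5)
The plan is to reduce Lemma~\ref{lem10} to the regular distortion property (Definition~\ref{definitionofregulardistortion}) by pulling the configuration back by $\gamma^{-1}$. Concretely, we would:
\begin{enumerate}
\item show that $\gamma^{-1}x,\gamma^{-1}y$ lie at definite distance from each other and from the coarse repeller $\omega_\gamma$;
\item apply regular distortion with $p=\gamma^{-1}x$ and $q=\gamma^{-1}y$;
\item use equivariance of $\xi^1$ to return to $\xi^1(x),\xi^1(y)$.
\end{enumerate}

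\textbf{Step 1: the pulled-back configuration.} Set $w=\pi_x(y)\in[x,x^{opp}]$. Then $w$ lies on the line $l_{b_0x}$, and $d_\Omega(\gamma b_0,w)\le r$, so the point $\gamma^{-1}w$ lies within $r$ of $b_0$. Since $\pi_x$ is defined projectively through $\mathring{x}$, the objects $x$, $x^{opp}$, $w$, $\mathring{x}$, $y$ are carried by $\gamma^{-1}$ to an analogous configuration, with $\gamma^{-1}w$ in a fixed compact set $\overline{B}_{d_\Omega}(b_0,r)$. The image line $\gamma^{-1}l_{b_0x}$ passes through $\gamma^{-1}b_0$ and $\gamma^{-1}w$ and ends at $\gamma^{-1}x$.

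\textbf{Step 2: separation from $\omega_\gamma$.} The point $\gamma^{-1}b_0$ is close, in $d_\angle$, to $\omega_\gamma$ when $\gamma$ is large; this follows from the convergence $\gamma^{-1}b_0\to$ limit point combined with the $C^1$ extreme property. We then argue by compactness and contradiction. Suppose $\gamma_n,x_n,y_n$ violate one of the separations. Pass to limits, using that $\gamma_n^{-1}w_n$ stays in a compact set and that $\gamma_n^{-1}b_0\to\eta$ for some $\eta\in\partial\Omega\cap\overline{\mathcal C}$. The limit line through $\eta$ and the limit of $\gamma_n^{-1}w_n\in\Omega$ then has endpoints $\eta$ and $\lim\gamma_n^{-1}x_n$, and these are distinct.

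For $\gamma^{-1}y$, the condition $\pi_x(y)=w$ means that $\gamma^{-1}y$ lies in the hyperplane spanned by $\gamma^{-1}\mathring{x}$ and $\gamma^{-1}w$. In the limit this is a hyperplane through an interior point, containing the limit of $\gamma^{-1}\mathring{x}$. This limit is the intersection of the supporting hyperplanes at the two endpoints, so the hyperplane separates the limit endpoints, which forces $\lim \gamma_n^{-1}y_n\neq$ either endpoint. Here we use that the points are extreme, so the hyperplane meets $\partial\Omega\cap\overline{\mathcal C}$ away from both endpoints.

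The hypothesis $d_\angle(x,y)\le\delta<1$ should play its role in the case $x^{opp}$ relevant for small $\gamma$. Bounded $\gamma$ form a finite set, which we handle trivially since $x\neq y$ would then be controlled... We expect this compactness argument, in particular ruling out that $\gamma^{-1}y$ degenerates toward $\gamma^{-1}x$ or $\omega_\gamma$ uniformly, to be the main obstacle. The first thing we would try is this limiting-configuration argument using the $C^1$ extreme points and the limits of supporting hyperplanes.

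\textbf{Step 3: conclusion.} Once we know that $(\omega_\gamma,\gamma^{-1}x,\gamma^{-1}y)$ is $d$-bounded for some $d=d(r,\delta)$, the regular distortion property gives
\[
d_\angle\big(\rho(\gamma)\xi^1(\gamma^{-1}x),\rho(\gamma)\xi^1(\gamma^{-1}y)\big)\ge \tfrac1C e^{-\alpha_1(\kappa(\rho(\gamma)))}.
\]
By equivariance of $\xi^1$, the left-hand side equals $d_\angle(\xi^1(x),\xi^1(y))$, which proves the lemma with $C_1=C(d)$, enlarged to cover the finitely many exceptional $\gamma$.
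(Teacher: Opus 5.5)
Your strategy (pull back by $\gamma^{-1}$, show $(\omega_\gamma,\gamma^{-1}x,\gamma^{-1}y)$ is $d$-bounded, then apply regular distortion and equivariance) is the paper's strategy. However, Step~2 is false as you state it, and the missing step is exactly what the paper's proof supplies.

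You claim that, for all large $\gamma$ satisfying the hypotheses, $\lim\gamma_n^{-1}x_n\neq\eta=\lim\gamma_n^{-1}b_0$. Whether this holds depends on which side of $b_0$ the point $w=\pi_x(y)$ lies on $[x^{opp},x]$.
\begin{itemize}
\item If $w\in(b_0,x)$, the pulled-back order is $\gamma^{-1}x^{opp},\gamma^{-1}b_0,\gamma^{-1}w,\gamma^{-1}x$. Since $\gamma^{-1}w$ stays in $\overline{B}_{d_\Omega}(b_0,r)$ while $\gamma^{-1}b_0\to\eta\in\partial\Omega$, it is $\gamma^{-1}x^{opp}$ that converges to $\eta$, and $\gamma^{-1}x$ converges to the other endpoint.
\item If $w\in(x^{opp},b_0)$, the order is $\gamma^{-1}x^{opp},\gamma^{-1}w,\gamma^{-1}b_0,\gamma^{-1}x$. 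This happens when $y$ is near $x^{opp}$, since then $\mathrm{Span}(\mathring{x},y)$ is near $H_{x^{opp}}$; such $y$ exist, for instance, when $\Gamma_0$ is cocompact and $r$ is large. Then $\gamma^{-1}x\to\eta$ together with $\omega_\gamma$, and the triple is not $d$-bounded for any $d>0$.
\end{itemize}
The hypothesis $d_\angle(x,y)\le\delta<1$ does not rule out the second case. It depends on the Euclidean structure chosen on $V$, and it is vacuous if $\Omega$ has angular diameter less than $\delta$; the paper's proof never uses it.

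The missing idea is to use the assumed failure of the conclusion to force $y$ close to $x$. The paper argues by contradiction:
\begin{enumerate}
\item Suppose $d_\angle(\xi^1(x_n),\xi^1(y_n))\le\frac1n e^{-\alpha_1(\kappa(\rho(\gamma_n)))}$. Since $\alpha_1\circ\kappa\ge 0$, the right side tends to $0$.
\item By continuity and injectivity of $\xi^1$, the limits of $x_n$ and $y_n$ coincide, so $\pi_{x_n}(y_n)\to x$.
\item This forces $\gamma_n b_0\to x$. So $\gamma_n$ is automatically unbounded, and your unfinished ``bounded $\gamma$'' case disappears.
\item It also places $\pi_{x_n}(y_n)$ in $(b_0,x_n)$, so $\gamma_n^{-1}x_n^{opp}\to a$ and hence $q=\lim\gamma_n^{-1}x_n\neq a$.
\item Your hyperplane-pencil argument then gives $p\neq a,q$, and regular distortion yields the contradiction.
\end{enumerate}

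Alternatively, you can keep a direct proof by splitting into cases. If $d_\angle(x,y)\ge\epsilon$, the bound is trivial from compactness of $\Lambda^1(\Gamma_0)$, injectivity of $\xi^1$, and $e^{-\alpha_1(\kappa(\cdot))}\le 1$. If $d_\angle(x,y)<\epsilon$ with $\epsilon$ small, then $\pi_x(y)$ lies in $(b_0,x)$ near $x$, so $\gamma$ is large and your Step~2 compactness argument goes through.
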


\begin{proof}
Assume for contradiction that the claim fails. Then there exist sequences $x_n,y_n\in \Lambda^1(\Gamma_0)$ and $\gamma_n\in \Gamma_0$ such that
\[
0 < d_{\angle}(x_n,y_n)\le \delta,\qquad
d_{\Omega}\big(\gamma_n b_0,\pi_{x_n}(y_n)\big)\le r,
\]
but
\begin{equation}\label{inequalityinappend1}
   d_{\angle}\big(\xi^1(x_n),\xi^1(y_n)\big) \le \frac{1}{n} e^{-\alpha_1(\kappa(\rho(\gamma_n)))}. 
\end{equation}
Set $p_n=\gamma_n^{-1} y_n$ and $q_n=\gamma_n^{-1} x_n$. Passing to a subsequence, assume
\[
x_n\to x\in \Lambda^1(\Gamma_0),\quad y_n\to y\in \Lambda^1(\Gamma_0),\qquad
\]
\[
\gamma_n b_0\to b\in \overline{\Omega},\quad \gamma_n^{-1} b_0\to a\in \overline{\Omega},\quad
\gamma_n^{-1}\big(\pi_{x_n}(y_n)\big)\to z\in \overline{B_{d_{\Omega}}(b_0,r)},
\]
and
\[
p_n\to p\in \Lambda^1(\Gamma_0),\qquad q_n\to q\in \Lambda^1(\Gamma_0),\qquad \gamma_n^{-1} x_n^{opp} \to \hat{a} \in \Lambda^1(\Gamma_0).
\]

Since the right-hand side of Inequality~\ref{inequalityinappend1} above tends to $0$ as $n\to\infty$, we have
$d_{\angle}\big(\xi^1(x_n),\xi^1(y_n)\big)\to 0$, and it follows that $x=y$. Hence $d_{\angle}(x_n,y_n)\to 0$ and $\pi_{x_n}(y_n)\to x$. As $d_{\Omega}\big(\gamma_n b_0,\pi_{x_n}(y_n)\big)\le r$, we obtain $\gamma_n b_0\to x$. Thus, the sequence $(\gamma_n)$ is unbounded, which implies $a,b\in \Lambda^1(\Gamma_0)$; moreover, $\hat{a}=a$.

Let $l_n := \gamma_n^{-1} l_{b_0 x_n}$. Then $\gamma_n^{-1} b_0, \gamma_n^{-1}\big(\pi_{x_n}(y_n)\big)\in l_n$. Since $\gamma_n^{-1}\big(\pi_{x_n}(y_n)\big)\to z\in \overline{B_{d_{\Omega}}(b_0,r)}$, the points $a, p, q$ are pairwise distinct. Note also that $\omega_{\gamma_n}\to a$. Hence, there exists $d>0$ such that, for all sufficiently large $n$, the triple $(\omega_{\gamma_n}, p_n, q_n)$ is $d$-bounded. By the regular distortion property, there exists $C_2>0$ such that
\[
d_{\angle}\big(\xi^1(x_n),\xi^1(y_n)\big)
= d_{\angle}\big(\rho(\gamma_n)\xi^1(q_n),\,\rho(\gamma_n)\xi^1(p_n)\big)
\ge C_2\, e^{-\alpha_1(\kappa(\rho(\gamma_n)))},
\]
which contradicts $d_{\angle}\big(\xi^1(x_n),\xi^1(y_n)\big)\le \frac{1}{n} e^{-\alpha_1(\kappa(\rho(\gamma_n)))}$. 
\end{proof}

\begin{proof}[Proof of Proposition~\ref{shadowradiusstrong}]

The right half of the inclusion is due to Canary--Zhang--Zimmer~\cite[Theorem~5.1]{CaZhZi}. So it suffices to show that for any $R>0, r\ge 0$ there exists a constant $C>1$ such that for any $x\in \Lambda^1(\Gamma_0)$, $z\in [b_0,x)$, and $\gamma\in \Gamma_0$ with $d_{\Omega}(z,\gamma b_0)\le r$, one has
\[
B_{d_{\angle}}\left(\xi^1(x),\frac{1}{C}\,e^{-\alpha_1(\kappa(\rho(\gamma)))}\right)\cap \xi^1(\Lambda^1(\Gamma_0))
\subset \xi^1 (\hat{\mathcal{O}}_{R}(b_0,z)).
\]

Assume not. Then there exist sequences $x_n,y_n\in \Lambda^1(\Gamma_0)$, $z_n\in [b_0,x_n)$, and $\gamma_n\in \Gamma_0$ such that
\[
d_{\Omega}(z_n,\gamma_n b_0)\le r,\qquad y_n\notin \hat{\mathcal{O}}_{R}(b_0,z_n),\qquad
d_{\angle}\big(\xi^1(x_n),\xi^1(y_n)\big)<\frac{1}{n}\,e^{-\alpha_1(\kappa(\rho(\gamma_n)))}.
\]
Passing to a subsequence, assume $x_n\to x\in \Lambda^1(\Gamma_0)$, $y_n\to y\in \Lambda^1(\Gamma_0)$, and $\gamma_n b_0\to b\in \overline{\Omega}$. Since $\xi^1$ is continuous and injective and $d_{\angle}\big(\xi^1(x_n),\xi^1(y_n)\big)<\frac{1}{n}\,e^{-\alpha_1(\kappa(\rho(\gamma_n)))}$, we must have $x=y$.

Let $w_n:=\pi_{x_n}(y_n)$. Since $\Gamma_0$ acts cocompactly on $\mathcal{C}$, there exists $\beta_n\in \Gamma_0$ with $d_{\Omega}(w_n,\beta_n b_0)\le D$, where $D$ is the $d_{\Omega}$-diameter of the compact fundamental domain of the $\Gamma_0$-action on $\mathcal{C}$. By Lemma~\ref{lem10}, there exists $C_1>1$ such that
\[
d_{\angle}\big(\xi^1(x_n),\xi^1(y_n)\big) \ge \frac{1}{C_1}\,e^{-\alpha_1(\kappa(\rho(\beta_n)))}.
\]
Together with $d_{\angle}\big(\xi^1(x_n),\xi^1(y_n)\big)<\frac{1}{n}\,e^{-\alpha_1(\kappa(\rho(\gamma_n)))}$, this yields
\[
\alpha_1\big(\kappa(\rho(\beta_n))\big) \ge \alpha_1\big(\kappa(\rho(\gamma_n))\big)+\log n-\log C_1. 
\]

Set $\eta_n:=\beta_n^{-1}\gamma_n\in \Gamma_0$. Lemma~\ref{lemmaofuniformcontinuity} and the above sequence of inequalities implies that $\{\eta_n\}$ is unbounded, hence $d_{\Omega}(b_0,\eta_n b_0)\to\infty$. It follows that $d_{\Omega}(\gamma_n b_0,\beta_n b_0)\to\infty$, and therefore $d_{\Omega}(z_n,w_n)\to\infty$ since $d_{\Omega}(z_n,\gamma_n b_0)\le r$ and $d_{\Omega}(w_n,\beta_n b_0)\le D$.

By Lemma~\ref{lem9}, $w_n$ cannot lie in $(z_n,x_n)$ for infinitely many $n$ (otherwise $y_n$ would be in the shadow), so after passing to a subsequence, we may suppose $w_n\in (b_0,z_n]$. Using $d_{\Omega}(\gamma_n b_0,z_n)\le r$ and $d_{\Omega}(\beta_n b_0,w_n)\le D$, we estimate
\begin{align*}
d_{\Omega}\big(\beta_n b_0,[b_0,\gamma_n b_0]\big)
&\le d_{\Omega}\big(\beta_n b_0,w_n\big)+ d_{\Omega}\big(w_n,[b_0,\gamma_n b_0]\big) \\
&\le D+ d_{\Omega}\big(z_n,[b_0,\gamma_n b_0]\big) \\
&\le D+r.
\end{align*}
Here we used the fact from Hilbert geometry (see Crampon~\cite[Lemma~8.3]{Cramponentropy} and Blayac~\cite[Section~A]{blayacmixing} ) that for any $y\in \overline{\Omega}, z\in \Omega$, $w\in[b_0,z] \implies d_{\Omega}\big(w,[b_0,y)\big)\le  d_{\Omega}\big(z,[b_0,y)\big)$.

Applying \cite[Lemma~6.6]{CaZhZi}, we obtain a constant $C_0>0$ such that
\[
\alpha_1\big(\kappa(\rho(\gamma_n))\big) \ge \alpha_1\big(\kappa(\rho(\beta_n))\big) + \alpha_1\big(\kappa(\rho(\beta_n^{-1}\gamma_n))\big)-C_0 \ge \alpha_1\big(\kappa(\rho(\beta_n))\big)-C_0,
\]
which contradicts the inequality $\alpha_1\big(\kappa(\rho(\beta_n))\big) \ge \alpha_1\big(\kappa(\rho(\gamma_n))\big)+\log n-\log C_1$ for all sufficiently large $n$. This proves the left half inclusion.
\end{proof}

\bibliographystyle{amsplain}
\bibliography{ref}

\end{document}